\newtheorem{theorem}{Theorem}
\newtheorem{proposition}[theorem]{Proposition}
\newtheorem{corollary}[theorem]{Corollary}
\newtheorem{lemma}[theorem]{Lemma}
\theoremstyle{definition}
\newtheorem{definition}[theorem]{Definition}
\newtheorem{remark}[theorem]{Remark}
\numberwithin{equation}{section}
\numberwithin{theorem}{section}
\title{Statistical Limit Laws for Hyperbolic Groups}
\begin{document}
\bibliographystyle{plain}

\author{Stephen Cantrell}
\address{Mathematics Institute, University of Warwick,
Coventry CV4 7AL, U.K.}
\email{S.J.Cantrell@warwick.ac.uk}

\maketitle

\begin{abstract}
Using techniques from ergodic theory and symbolic dynamics, we derive statistical limit laws for real valued functions on hyperbolic groups. In particular, our results apply to convex cocompact group actions on $\text{CAT}(-1)$ spaces, and provide a precise statistical comparison between word length and displacement. After generalising our methods to the multidimensional setting, we prove that the abelianisation map satisfies a non-degenerate multidimensional central limit theorem. We also obtain local limit theorems for group homomorphisms and for the displacement function associated to certain actions.
\end{abstract}

\section{Introduction and Results}

Let $G$ be a non-elementary hyperbolic group with a fixed finite generating set $S$. Let $|g|$ denote the word length of $g \in G$ with respect to $S$ and write $W_n= \{g \in G: |g|=n\}$. There has been significant interest in understanding how the images of elements of $W_n$, under natural real valued maps, such as group homomorphism or quasimorphisms, are distributed in $\mathbb{R}$. For example, Horsham and Sharp proved that when $G$ is a free group, $S$ is the canonical free generating set for $G$ and $\varphi: G \to \mathbb{R}$ is a sufficiently regular quasimorphism (i.e. a group homomorphism up to bounded error), then the normalized images
$$\left\{ \frac{\varphi(g)}{\sqrt{n}} : g \in W_n\right\},$$
converge to a normal distribution as $n\to \infty$ \cite{horshamthesis}, \cite{HS}. Other similar statistical results have been proved when $G$ is a free group, \cite{GR}, \cite{45}, \cite{riv}.\\ \indent
In \cite{cf} Calegari and Fujiwara obtain a Gaussian limit law that holds for general non-elementary hyperbolic groups. They construct a sequence of measures $\nu_n$ on $G$, such that if $\varphi : G \to \mathbb{R}$ belongs to a class of functions, called bicombable functions, then there exists $A \in \mathbb{R}$ such that the distributions
$$\nu_n\left\{g \in G: \frac{\varphi(g) - An}{\sqrt{n}} \le x\right\},$$
converge as $n \to \infty$ to a normal distribution. Calegari extends this result in his survey \cite{calnotes}, showing that the above central limit theorem holds for a wider class of functions than bicombable functions.\\ \indent
The proof of these results rely on ideas and techniques from ergodic theory. In fact, these proofs follow a similar methodology that we now briefly outline. The general idea is to use the fact that hyperbolic groups are strongly Markov. That is, there exists a finite directed graph that in some sense encodes the key properties of $G$. Using this graph we can associate to the pair $G, \varphi$ (where $G$ is a hyperbolic group and $\varphi: G \to \mathbb{R}$ is in the required class) a dynamical system $(\Sigma, \sigma: \Sigma \to \Sigma)$ and a suitable function $f: \Sigma \to \mathbb{R}$. The function $f$ is chosen in such a way that the statistical behaviour of $\varphi$ on $G$ can be deduced from the statistical behaviour of $f$ on $(\Sigma,\sigma)$. Then, using techniques from ergodic theory, one can study the behaviour of $f$ on $\Sigma$ to deduce a central limit theorem for $\varphi$ on $G$. In the result of Calegari and Fujiwara, the measures $\nu_n$ are supported on $W_n$ and weight elements of $W_n$ by a quantity depending on the system $(\Sigma,\sigma)$. The system $(\Sigma,\sigma)$ associated to $G$ is not canonical and hence neither are the measures $\nu_n$. \\ \indent
The above discussion leads to the following natural questions.

\begin{enumerate}
\item Does the result of Horsham and Sharp generalise to the case that $G$ is an arbitrary non-elementary hyperbolic group?
\item In the result of Calegari and Fujiwara, can we replace the sequence  $\nu_n$ with a sequence of measures that does not depend on $(\Sigma,\sigma)$? In particular, can we replace $\nu_n$ with the sequence of uniform measures on $W_n$?
\end{enumerate}

In this paper we answer these questions in the affirmative. We also prove an averaging theorem and a large deviation theorem. We will now state these results, beginning with a discussion of the hypotheses we impose. Throughout the paper, $G$ will denote a non-elementary hyperbolic group.\\ \indent
We employ similar methods to those used in \cite{cf} and \cite{horshamthesis}. That is, given $G$ equipped with a finite generating set $S$, we construct a dynamical system $(\Sigma,\sigma)$ and embed $G$ into $\Sigma$ via a function $i: G \to \Sigma$. Specifically, $(\Sigma,\sigma)$ will be a subshift of finite type that encodes the key properties of $G$ and $S$. This will allow us to apply techniques from the branch of ergodic theory known as thermodynamic formalism. We are interested in the statistics of functions $\varphi: G \to \mathbb{R}$ that satisfy two conditions that appear as Condition $(1)$ and Condition $(2)$ below. Since these conditions are somewhat technical, we defer their precise statement until Section 3. Roughly speaking, these conditions allow us to translate questions about $\varphi$ on $G$ to questions about a suitable function $f: \Sigma \to \mathbb{R}$.  For now we note that there are many natural examples of functions satisfying these conditions, including group homomorphism, some quasimorphisms and as we will discuss shortly, the displacement function associated to certain group actions on $\text{CAT}(-1)$ spaces.\\
\indent Recall that, given $G$ and $S$, $W_n = \{g \in G: |g|=n\}$. We write $\# W_n$ to denote the cardinality of $W_n$. We now state our main results. 
 
\begin{theorem} [Averaging Theorem] \label{averaging}
Let $G$ be a non-elementary hyperbolic group equipped with a fixed generating set. Suppose that $\varphi: G \to \mathbb{R}$ satisfies Condition $(1)$ and Condition $(2)$. Then, there exists $\Lambda \in \mathbb{R}$ such that
$$ \frac{1}{\# W_n} \sum_{g \in W_n} \frac{\varphi(g)}{n} = \Lambda + O\left(\frac{1}{n}\right)$$
as $n\to\infty$. 
\end{theorem}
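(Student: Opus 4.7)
The strategy is to translate the averaging problem for $\varphi$ on $G$ into a dynamical question on the subshift of finite type $(\Sigma,\sigma)$ supplied by the strong Markov structure of $(G,S)$, and then to exploit the spectral theory of the Ruelle transfer operator. By Conditions $(1)$ and $(2)$, the coding $i:G\to\Sigma$ comes with a H\"older continuous function $f:\Sigma\to\mathbb{R}$ for which $\varphi(g) = S_n f(i(g)) + O(1)$ when $g \in W_n$, where $S_n f = \sum_{k=0}^{n-1} f\circ\sigma^k$. The bounded error contributes $O(1)$ to $\tfrac{1}{\#W_n}\sum_{g\in W_n}\varphi(g)$ and hence $O(1/n)$ to the quantity being computed, so it suffices to control
$$\frac{1}{n\,\#W_n}\sum_{g\in W_n} S_n f(i(g)).$$

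The next step is to rewrite both numerator and denominator in terms of iterates of a transfer operator. Let $L_t$ denote the Ruelle operator associated to the potential $tf$, acting on an appropriate space of H\"older functions on $\Sigma$. For $t$ near $0$, analytic perturbation theory yields a simple leading eigenvalue $\lambda(t)$ (with $\lambda(0)=e^h$, where $h$ is the exponential growth rate of $G$) together with a uniform spectral gap, so that $L_t^n = \lambda(t)^n P_t + Q_t^n$ with $\|Q_t^n\| \le C\theta^n \lambda(t)^n$ for some $\theta < 1$ and $P_t$ analytic in $t$. Pairing $L_t^n$ with the initial and terminal indicator vectors coming from the Cannon-type automaton for $(G,S)$ yields
$$\sum_{g\in W_n} e^{tS_n f(i(g))} = \lambda(t)^n\bigl(c(t) + O(\theta^n)\bigr), \qquad \#W_n = e^{hn}\bigl(c_0 + O(\theta^n)\bigr),$$
with $c$ real analytic near $0$ and $c(0) = c_0$. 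Differentiating the first identity in $t$ at $0$ and taking the ratio with the second gives
$$\frac{1}{\#W_n}\sum_{g\in W_n} S_n f(i(g)) = n\cdot\frac{\lambda'(0)}{e^h} + \frac{c'(0)}{c_0} + O(n\theta^n),$$
and combining with the initial reduction gives the stated asymptotic with $\Lambda = \lambda'(0)/e^h = \int f\,d\mu$, where $\mu$ is the measure of maximal entropy on $\Sigma$.

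The main technical obstacle I anticipate is the familiar mismatch between the group and its symbolic model. The Cannon-type automaton of $(G,S)$ is generically not strongly connected, the coding $i$ need not restrict to a bijection between $W_n$ and length-$n$ admissible cylinders, and one must keep track of initial-vertex data throughout the counting. Only those strongly connected components of the automaton whose spectral radius equals $e^h$ contribute to the dominant asymptotic; the remaining ``small'' components must be shown to contribute an error that is exponentially smaller than $e^{hn}$, not merely $o(e^{hn})$, so that after normalization by $n\,\#W_n$ the resulting error is genuinely $O(1/n)$ rather than merely $o(1)$. Once this component-wise bookkeeping is set up and the quantitative spectral gap of $L_t$ is in place, the asymptotic above is a standard consequence of perturbed transfer operator theory.
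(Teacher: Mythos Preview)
Your overall plan---write the generating weighted sum in terms of transfer operators, differentiate at $t=0$, and read off the asymptotic---is precisely the scheme the paper follows. However, you have underestimated the central difficulty and dismissed it as ``component-wise bookkeeping,'' when in fact it is the substantive content of the theorem.

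The Cannon automaton typically has several maximal strongly connected components $B_1,\dots,B_m$, each with its own pressure function $P_i(tf)$ and its own candidate mean $\Lambda_i = P_i'(0)=\int f\,d\mu_i$. Your displayed formulas $\sum_{g\in W_n}e^{tS_nf}=\lambda(t)^n(c(t)+O(\theta^n))$ and $\#W_n=e^{hn}(c_0+O(\theta^n))$ presuppose a \emph{single} simple leading eigenvalue; in reality one gets a sum $\sum_i\sum_{k=0}^{p_i-1}e^{2\pi ikn/p_i}e^{nP_i(tf)}Q_{i,k}(t)\chi(\dot 0)$ over components and over $p_i$-th roots of unity coming from the periods. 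In particular $\#W_n$ is only $\Theta(e^{hn})$, not $c_0e^{hn}(1+O(\theta^n))$. After differentiating, the leading term of $\sum_{g\in W_n}\varphi(g)$ is a combination of the form $n\sum_i\Lambda_i\,a_i(n)\,e^{hn}$ with bounded but genuinely oscillating coefficients $a_i(n)$; dividing by $\#W_n$ does \emph{not} produce $n\Lambda+O(1)$ unless all the $\Lambda_i$ coincide.

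Proving that the $\Lambda_i$ are independent of $i$ is exactly where Condition~$(2)$ (Lipschitz in both the left and right word metrics) enters, and it is not a consequence of transfer operator theory alone. The paper devotes Proposition~4.8 to this: one lifts the measure of maximal entropy on each $\Sigma_{B_i}$ back to the Gromov boundary via the Patterson--Sullivan construction, and then uses ergodicity of the $G$-action on $\partial G$ together with the two-sided Lipschitz property of $\varphi$ to force the means (and variances) to agree across components. Without this step the argument collapses; once it is in place, the rest really is bookkeeping, essentially as you describe.
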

 
This result can be seen as an analogue of the law of large numbers, as it describes how $\varphi(g)/|g|$ averages over the sets $W_n$ as $n\to\infty$. This leads us to ask if we can describe more precisely how $\varphi$ averages over $W_n,$ as $n\to\infty$. If we assume that $\varphi(\cdot) - \Lambda |\cdot| : G \to \mathbb{R} $ is an unbounded function, where $\Lambda$ is the constant from Theorem \ref{averaging}, then we can deduce a central limit theorem for the normalised images 
$$\left\{\frac{\varphi(g)-n\Lambda}{\sqrt{n}}: g \in W_n\right\}.$$
Furthermore, using Theorem \ref{averaging} we quantify the rate of convergence associated to our central limit theorem. We show that the sequence of distributions that we consider converges uniformly to the Gaussian distribution at a $O\left(n^{-1/2}\right)$ rate. This is the so-called Berry-Esseen error term. 

\begin{theorem} [Central Limit Theorem] \label{CLT}
Let $G$ be a non-elementary hyperbolic group equipped with a finite generating set. Suppose that $\varphi: G \to \mathbb{R}$ satisfies Condition $(1)$ and Condition $(2)$. Let $\Lambda$ be the constant from Theorem \ref{averaging} and suppose that $\varphi(\cdot) - \Lambda |\cdot|:G \to \mathbb{R}$ is unbounded. Then, there exists $\sigma^2 > 0$ such that
$$\frac{1}{\# W_n} \# \left\{ g \in W_n:  \ \frac{\varphi(g) - n\Lambda}{\sqrt{n}} \le x\right\} = \frac{1}{\sqrt{2\pi}  \sigma} \int_{-\infty}^x e^{-t^2/2\sigma^2} \ dt + O\left(\frac{1}{\sqrt{n}}\right),$$
where the implied constant is independent of $x \in \mathbb{R}$.
\end{theorem}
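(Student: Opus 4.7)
My plan is to reduce the counting problem on $G$ to a quantitative central limit theorem on a symbolic dynamical system, and then combine the Nagaev--Guivarc'h spectral method with Esseen's smoothing inequality to extract the Berry--Esseen rate. First, using the strong Markov property of $G$, I construct a subshift of finite type $(\Sigma,\sigma)$ and an embedding $i: G \to \Sigma$ under which elements of $W_n$ correspond, in a bounded-to-one fashion, to admissible words of length $n$. Conditions $(1)$ and $(2)$ should promote $\varphi$ to a H\"older continuous function $f: \Sigma \to \mathbb{R}$ with
$$\varphi(g) = S_{|g|} f(i(g)) + O(1)$$
uniformly in $g$, where $S_n f := \sum_{k=0}^{n-1} f \circ \sigma^k$. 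Thermodynamic formalism then identifies the constant $\Lambda$ of Theorem~\ref{averaging} as $\int f \, d\mu$, where $\mu$ is the equilibrium state that governs uniform counting on $W_n$. Passing the $O(1)$ error through the $\sqrt{n}$ rescaling absorbs it into the Berry--Esseen remainder, so it suffices to prove the theorem with $\varphi(g)$ replaced by $S_{|g|} f(i(g))$.

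The analytic core is a spectral study of the perturbed transfer operators $L_z := L_\psi \circ M_{e^{zf}}$ acting on H\"older functions, where $\psi$ is the normalised counting potential so that $L_0$ has spectral radius $1$. For small $|z|$, Kato perturbation theory yields an analytic leading eigenvalue $\lambda(z)$, a rank-one spectral projection $\Pi_z$, and a residual $N_z$ with strictly smaller spectral radius, together with the expansion
$$\lambda(it) = \exp\!\bigl(it\Lambda - \sigma^{2} t^{2}/2 + O(|t|^{3})\bigr).$$
Expressing the characteristic function
$$\hat F_n(t) := \frac{1}{\# W_n} \sum_{g \in W_n} \exp\!\bigl(it\,(\varphi(g) - n\Lambda)/\sqrt n\bigr)$$
as a matrix coefficient of $L_{it/\sqrt n}^{n}$ (after factoring out the drift $e^{-it\Lambda\sqrt n}$) and substituting the expansion gives
$$\bigl|\hat F_n(t) - e^{-\sigma^2 t^2/2}\bigr| \ll \frac{1 + |t|^{3}}{\sqrt n} \, e^{-c t^2}, \qquad |t| \le \delta \sqrt n.$$
Esseen's smoothing inequality with $T := c\sqrt n$ then converts this into a uniform Kolmogorov-distance bound of order $n^{-1/2}$: the boundary term $O(1/T)$ is $O(n^{-1/2})$, the integral over $|t| \le \delta \sqrt n$ is $O(n^{-1/2})$ by the display above, and the integral over $\delta\sqrt n \le |t| \le c\sqrt n$ is handled using a uniform spectral gap $\sup_{\delta \le |\tau| \le c}\rho(L_{i\tau}) < 1$, which forces $\hat F_n(t)$ to decay exponentially in $n$.

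The hard part is translating the single hypothesis --- unboundedness of $\varphi(\cdot) - \Lambda |\cdot|$ on $G$ --- into the two dynamical conditions needed: (a) strict positivity $\sigma^2 > 0$, equivalently that $f - \Lambda$ is not a H\"older coboundary for $\sigma$; and (b) the non-lattice property that $f$ is not cohomologous to a function valued in a discrete subgroup of $\mathbb{R}$, which underlies the uniform spectral gap required in the middle range of the Esseen estimate. I expect to argue both contrapositively: a coboundary relation or lattice-valued cohomology on $\Sigma$ would, via $\varphi(g) = S_{|g|}f(i(g)) + O(1)$ and the abundance of periodic orbits in $\Sigma$ corresponding to group elements, force $\varphi(\cdot) - \Lambda |\cdot|$ to be bounded on $G$, contradicting the assumption. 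A secondary technical nuisance is that the Markov coding of $G$ is generally reducible; I would decompose $\Sigma$ into its maximal irreducible components, isolate the unique component of full exponential growth $h = \lim n^{-1} \log \# W_n$, perform the spectral analysis there, and control the contribution of the lower-growth components to $\hat F_n$ by elementary counting bounds together with the averaging control from Theorem~\ref{averaging}.
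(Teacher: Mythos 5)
Your overall strategy (transfer operators, Kato perturbation, Esseen-type smoothing with $T_n$ of order $\sqrt n$) is the same as the paper's, but two of your steps fail as stated. First, your treatment of reducibility assumes there is a \emph{unique} component of full exponential growth; in general the Markov coding of a hyperbolic group has \emph{several} maximal components (they are merely disjoint, Lemma 2.10), and each may have period $p_i>1$, so that $L_{C_i,itf}$ carries $p_i$ simple maximal eigenvalues $e^{2\pi i k/p_i}e^{P_i(itf)}$. The characteristic function $\hat F_n$ is then a sum of contributions $e^{nP_i(itf n^{-1/2})}$ from the different maximal components, and unless one proves that the first and second derivatives of the pressures $P_i$ at $0$ (the drifts $\Lambda_i$ and variances $\sigma_i^2$) agree across components, the limit is a mixture rather than a single Gaussian and the Berry--Esseen estimate collapses. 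This cross-component comparison is the heart of the paper's argument (Proposition 4.8), and it is not a routine spectral fact: it is proved by transporting the component-wise CLT to the boundary via Lemma 4.7, the Calegari--Fujiwara measure built from the Patterson--Sullivan measure, and ergodicity of the $G$-action on $\partial G$ (using Conditions (1) and (2) to make the relevant boundary sets $G$-invariant). Your proposal contains no substitute for this step. Relatedly, your contrapositive argument for $\sigma^2>0$ also hides a component issue: a coboundary relation on one maximal component only bounds $\varphi(\cdot)-\Lambda|\cdot|$ on the group elements coded inside that component, and passing from there to boundedness on all of $G$ requires the growth-quasitightness input $MB_i^GM=G$ of Gou\"ezel--Math\`eus--Maucourant used in Lemma 5.1.

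Second, your claim that unboundedness of $\varphi(\cdot)-\Lambda|\cdot|$ yields the non-lattice property of $f$ is false: take $\varphi$ to be an integer-valued homomorphism (e.g.\ a coordinate of the abelianisation map); then $\Lambda=0$, $\varphi$ is unbounded, yet $f$ can be taken lattice. So the uniform spectral gap $\sup_{\delta\le|\tau|\le c}\rho(L_{i\tau})<1$ that you invoke on the middle range $\delta\sqrt n\le |t|\le c\sqrt n$ cannot be deduced from the hypotheses of the theorem, and indeed the Berry--Esseen CLT here must hold in lattice cases too. The repair is to dispense with the middle range altogether: take $T_n=\epsilon\sqrt n$ with $\epsilon$ small enough that $t/\sqrt n$ stays in the perturbative window, where the expansion $P_i(itf n^{-1/2})=h-\sigma_\varphi^2t^2/(2n)+O(|t|^3n^{-3/2})$ and positive variance alone give the required decay; the $O(1/T_n)$ boundary term is still $O(n^{-1/2})$. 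This is exactly what the paper does (Lemmas 8.5--8.10), together with a modified Basic Inequality allowing nonzero means $E_n=O(n^{-1/2})$ (controlled by Theorem 1.1) and an over-counted family $H_n$ whose Fourier transforms are \emph{exactly} expressible by transfer operators; non-lattice information is only needed later, for the local limit theorems of Section 11.
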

We also prove the following large deviations result.
\begin{theorem} [Large Deviation Theorem] \label{ldt}
Let $G$ be a non-elementary hyperbolic group. Suppose that $\varphi: G \to \mathbb{R}$ satisfies Condition $(1)$ and Condition $(2)$. Then, for any $\epsilon >0$,
$$\limsup_{n \to \infty} \ \frac{1}{n} \log \left( \frac{1}{\# W_n} \#\left\{g \in W_n : \left|\frac{\varphi(g)}{n} - \Lambda \right| > \epsilon \right\} \right)< 0,$$
where $\Lambda$ is as in Theorem \ref{averaging}.
\end{theorem}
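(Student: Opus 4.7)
The plan is to translate the large deviation estimate into a statement about the symbolic dynamical system $(\Sigma,\sigma)$ and a H\"older function $f : \Sigma \to \mathbb{R}$ coding $\varphi$, then run a standard Chernoff / G\"artner--Ellis argument driven by topological pressure. I would use throughout the symbolic encoding developed in Section~3 for the preceding theorems.

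First, for $t$ in a real neighbourhood of $0$, define
$$M_n(t) = \frac{1}{\# W_n}\sum_{g \in W_n} e^{t\varphi(g)}.$$
Using Conditions~$(1)$ and $(2)$, the sum $\sum_{g \in W_n} e^{t\varphi(g)}$ can be rewritten (up to controlled error) as a weighted sum over admissible paths of length $n$ in the Markov automaton for $G$, with weights given by Birkhoff sums of $tf$. Ruelle's operator theorem applied to the transfer operators $\mathcal{L}_{tf}$, which depend analytically on $t$, then yields the exponential asymptotics
$$\frac{1}{n}\log\sum_{g \in W_n} e^{t\varphi(g)} \longrightarrow P(tf), \qquad \frac{1}{n}\log \# W_n \longrightarrow P(0),$$
so the limit
$$\Psi(t) := \lim_{n\to\infty}\frac{1}{n}\log M_n(t) = P(tf) - P(0)$$
exists and is real analytic near $0$, with $\Psi(0)=0$, $\Psi$ convex, and $\Psi'(0) = \Lambda$ by Theorem~\ref{averaging}. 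In particular $\Psi(t) = \Lambda t + o(t)$, so for any $\epsilon>0$ the Legendre-type quantity
$$I_+(\epsilon) := \sup_{t > 0}\bigl(t(\Lambda+\epsilon)-\Psi(t)\bigr)$$
is strictly positive: for $t>0$ small, the bracket equals $t\epsilon - o(t) > 0$. Analogously $I_-(\epsilon) := \sup_{t<0}\bigl(t(\Lambda-\epsilon)-\Psi(t)\bigr) > 0$.

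A Markov/Chernoff bound then gives, for any $t > 0$,
$$\frac{1}{\# W_n}\#\{g \in W_n : \varphi(g) > n(\Lambda+\epsilon)\} \le e^{-tn(\Lambda+\epsilon)}\,M_n(t),$$
and taking logarithms, dividing by $n$, sending $n \to \infty$, and optimising over $t$ yields
$$\limsup_{n\to\infty}\frac{1}{n}\log\frac{\#\{g \in W_n : \varphi(g)/n > \Lambda + \epsilon\}}{\# W_n} \le -I_+(\epsilon) < 0.$$
The symmetric argument with $t<0$ controls the lower tail, and adding the two contributions gives Theorem~\ref{ldt}.

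The main technical obstacle is the first step: making precise the correspondence between group-theoretic sums over $W_n$ and symbolic sums over cylinders of length $n$ so that the pressure asymptotics apply with control uniform in $t$. The Markov coding is typically not bijective (elements of $G$ may have several representatives as paths in $\Sigma$, and not every path terminates at a word of length exactly $n$), so one has to decompose $W_n$ according to initial vertices of the automaton and absorb the bounded-multiplicity discrepancies using the spectral gap of $\mathcal{L}_{tf}$, available uniformly for $t$ in a small complex neighbourhood of $0$. Once this translation is in place the remainder of the argument is essentially soft.
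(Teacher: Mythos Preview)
Your approach---Chernoff/exponential Markov inequality driven by the derivative of pressure at $0$---is exactly the one the paper uses (Section~9). The difference is in how the ``soft'' part is set up, and this is where there is a genuine gap.

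You write the asymptotics $\tfrac{1}{n}\log\sum_{g\in W_n}e^{t\varphi(g)}\to P(tf)$ as if a single pressure function exists, invoking ``Ruelle's operator theorem.'' That theorem requires the underlying subshift to be topologically mixing (or at least irreducible), and for a general non-elementary hyperbolic group the strongly Markov automaton $\mathcal{G}$ need not be: its transition matrix typically has several irreducible components, and even several \emph{maximal} ones $B_1,\dots,B_m$ (Section~2.2 and Lemma~2.10). So there is no single $P(tf)$; instead one has a family of pressures $P_i(tf)$, one per maximal component, and the upper bound on $\sum_{g\in W_n}e^{t\varphi(g)}$ is governed by $\max_i e^{nP_i(tf)}$ (plus contributions from non-maximal components, which are exponentially smaller). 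The paper packages this as Lemma~9.1 and~9.2, using Lemma~6.1 and the Spectral Radius Theorem. The crucial input making the Chernoff argument go through is that all $P_i$ agree to first order at $0$: $P_i(0)=h$ and $P_i'(0)=\Lambda$ independently of $i$. The first equality is because the $B_i$ are maximal; the second is Proposition~4.8, which is a nontrivial consequence of the Calegari--Fujiwara argument using the Patterson--Sullivan measure and ergodicity of the $G$-action on $\partial G$. Without that input your function $\Psi$ (which would have to be $\max_i P_i(tf)-h$) is not obviously differentiable at $0$ with derivative $\Lambda$, and the positivity of $I_{\pm}(\epsilon)$ is not clear.

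A second, smaller point: you misidentify the technical obstacle. In this paper's framework (Definition~2.5) the coding \emph{is} a bijection between paths from $\ast$ and group elements, and it \emph{does} preserve word length; there is no multiplicity or overcounting. The real difficulty is the reducibility of the automaton and the need to compare pressures across maximal components.
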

We will show that Theorem \ref{CLT} provides a positive answer to the two question posed earlier in this introduction. Apart from answering these two questions, our motivation behind this work is to understand the statistics of the displacement function associated to group actions on $\text{CAT}(-1)$ spaces. We are interested in answering the following question.\\ \indent 
Let $(X,d)$ be a complete $\text{CAT}(-1)$ geodesic metric space and fix an origin $o$ for $X$. Suppose that a hyperbolic group $G$ equipped with a finite generating set acts on $X$ properly discontinuously, convex cocompactly by isometries. The \u{S}varc-Milnor Lemma implies that there exists constants $C_0,C_1 > 0$ such that
$$C_0 |g|\le d(o,go) \le C_1 |g|$$
for all $g \in G$. We call the function $g \mapsto d(o,go)$ the displacement. The above inequality shows that word length and displacement are comparable quantities. This leads us to ask whether we can form a more refined comparison, on average, between them? \\ \indent
We show that the displacement function satisfies Condition $(1)$ and Condition $(2)$. Theorems \ref{averaging}, \ref{CLT} and $1.3$ then apply and we obtain the following comparison results.

\begin{theorem}
Suppose a non-elementary hyperbolic group $G$ acts convex cocompactly, properly discontinuously by isometries on a complete, geodesic, $\text{CAT}(-1)$ metric space $(X,d)$. Fix an origin $o \in X$ and a finite generating set for $G$. Then there exists $\Lambda >0 $ such that
$$ \frac{1}{\# W_n} \sum_{g \in W_n} \frac{d(o,go)}{n} = \Lambda + O\left(\frac{1}{n}\right).$$
Also, for any fixed $\epsilon >0$,
$$\limsup_{n \to \infty} \ \frac{1}{n} \log \left( \frac{1}{\# W_n} \#\left\{g \in W_n : \left|\frac{d(o,go)}{n} - \Lambda \right| > \epsilon \right\} \right)< 0.$$
Furthermore, if $d(o,\cdot \hspace{1pt} o)-\Lambda |\cdot|:G \to \mathbb{R}$ is unbounded then there exists $\sigma^2 > 0$ such that
$$\frac{1}{\# W_n} \# \left\{ g \in W_n:  \ \frac{d(o,go) - n\Lambda}{\sqrt{n}} \le x\right\} = \frac{1}{\sqrt{2\pi} \sigma} \int_{-\infty}^x e^{-t^2/2\sigma^2} \ dt + O\left(\frac{1}{\sqrt{n}}\right),$$
where the implied constant is independent of $x \in \mathbb{R}$.
\end{theorem}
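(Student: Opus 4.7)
The plan is to deduce this theorem as an immediate application of Theorems \ref{averaging}, \ref{CLT} and \ref{ldt} to the specific function $\varphi(g) := d(o, go)$. Once we verify that $\varphi$ satisfies Condition $(1)$ and Condition $(2)$, the three displayed asymptotic formulas are exactly the content of those three theorems; the only extra statement is positivity of $\Lambda$, which is immediate from the \v Svarc-Milnor bound $d(o,go)\ge C_0|g|$ because averaging this over $W_n$ forces $\Lambda \ge C_0 > 0$.

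So the substance of the proof is to check the two conditions for the displacement. The first essentially requires that, under the embedding $i : G \to \Sigma$ into the coding subshift of finite type associated to $G$ and $S$, the value $\varphi(g)$ is well approximated by a Birkhoff sum $S_n f(i(g))$ for a suitable $f : \Sigma \to \mathbb{R}$. The natural candidate is obtained by telescoping along a geodesic word $g = s_1 \cdots s_n$:
\begin{equation*}
d(o,go) = \sum_{k=1}^{n} \bigl( d(o, s_1 \cdots s_k\, o) - d(o, s_1 \cdots s_{k-1}\, o)\bigr),
\end{equation*}
and defining $f$ on the edges of the Markov graph so that traversing an edge records the corresponding increment (after finitely many choices are averaged in the usual way). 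This produces Condition $(1)$ with the customary bounded error coming from the finite-state nature of the coding and the choice of basepoint.

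The main obstacle is Condition $(2)$, which amounts to showing that the function $f$ constructed above is Hölder continuous in the symbolic metric on $\Sigma$. Here I would use that, for a properly discontinuous, convex cocompact action of $G$ by isometries on a complete CAT$(-1)$ space, the orbit map extends to a Hölder equivariant homeomorphism from $\partial G$ onto the limit set $\Lambda G \subset \partial X$. A standard consequence, proved via the CAT$(-1)$ thin-triangle and fellow-traveller estimates, is that two geodesic representatives of group elements whose initial symbolic segments of length $m$ agree produce orbits in $X$ that fellow-travel along an initial stretch of length comparable to $m$, with bounded divergence. Feeding this into the telescoping above shows that the increments, and hence the values of $f$, depend on the future only up to an error decaying exponentially in the length of common past; this is exactly the Hölder condition required.

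With both conditions verified, Theorems \ref{averaging}, \ref{CLT} and \ref{ldt} apply to $\varphi(g) = d(o,go)$ and yield the three claims simultaneously. The hardest step is the fellow-travelling/Hölder extension argument in the CAT$(-1)$ setting; everything else reduces to bookkeeping with the Markov coding and the \v Svarc-Milnor inequality.
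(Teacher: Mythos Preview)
Your overall plan---reduce to Theorems~\ref{averaging}, \ref{CLT} and \ref{ldt} by checking Conditions~(1) and~(2) for the displacement---is exactly the paper's approach (Lemma~3.7), and your \v Svarc--Milnor argument for $\Lambda>0$ is fine. However, you have misread the two conditions. Condition~(2) in the paper is \emph{not} H\"older continuity of $f$ on $\Sigma_A$; it is the statement that $\varphi$ itself is Lipschitz in both the left and right word metrics on $G$. For the displacement this is a one-line triangle inequality: $|d(o,go)-d(o,ho)|\le d(go,ho)=d(o,g^{-1}ho)\le C\,|g^{-1}h|$, and similarly on the right. The paper simply calls this ``a simple exercise.''

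What you describe as ``Condition~(2)''---H\"older regularity of $f$---is actually part of Condition~(1), since that condition requires $f\in F_\theta(\Sigma_A)$. Your fellow-travelling/CAT$(-1)$ sketch is indeed the heart of why such an $f$ exists, and is in spirit the Pollicott--Sharp argument the paper cites. But your proposed construction of $f$ ``on the edges of the Markov graph'' cannot work as stated: the telescoped increment $d(o,s_1\cdots s_k o)-d(o,s_1\cdots s_{k-1}o)$ depends on the entire prefix $s_1\cdots s_{k-1}$, not just on the $k$th edge, while $f(\sigma^k x)$ sees only the \emph{future} coordinates $x_k,x_{k+1},\ldots$. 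The correct construction (Pollicott--Sharp, or equivalently the H\"older criterion of Definition~3.1 combined with Lemma~1 of \cite{oc}) defines $f$ as a limit of such increments along the infinite geodesic coded by $x$, and the CAT$(-1)$ comparison is precisely what makes that limit exist and vary H\"older-continuously. So your argument has the right geometric input but the wrong bookkeeping; once you redirect the fellow-travelling estimate toward verifying the H\"older condition of Definition~3.1 (rather than an edge-local formula), Condition~(1) follows and the proof is complete.
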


\begin{remark}
We note that similar results have been obtained by Gekhtman, Taylor and Tiozzo in \cite{lox} and \cite{gtt}.\\

\noindent (i) In [11], Gekhtman, Taylor and Tiozzo showed that 
\begin{equation}
\frac{1}{\# W_n} \#\left\{g \in W_n : \left|\frac{d(o,go)}{n} - \Lambda \right| > \epsilon \right\} \to 0
\end{equation}
with no estimate on the rate of convergence, for non-elementary actions (see Section $5$ of \cite{lox} for a definition) of $G$ on hyperbolic metric spaces. These actions are more general than convex cocompact actions. However, we have recently learned from these authors that the random walk results they used have been improved by Sunderland \cite{sun} and that this improvement, combined with the work in \cite{lox}, gives exponential convergence in $(1.1)$ at the level of generality considered in \cite{lox}.\\ 
\noindent (ii) In \cite{gtt}, Gekhtman, Taylor and Tiozzo obtained a central limit theorem as above (but without an error term) in the special case where G is a free group or surface group.
\end{remark}

After proving the above results, we generalise our method to the multidimensional setting with the aim of studying the statistics of the abelianisation homomorphism $\varphi : G \to G/[G,G]$. The abelianisation $G/[G,G]$ takes the form $\mathbb{Z}^k \oplus \text{Torsion }$ for some $k \ge 0$ and we are interested in how the image of $G$ distributes in the non-torsion factor, $\mathbb{Z}^k$. We will assume that $k\ge1$ and that we have fixed an isomorphism taking the non-torsion part of $G/[G,G]$ to $\mathbb{Z}^k$. We will refer to the induced homomorphism $\varphi: G \to \mathbb{Z}^k$ as the abelianisation homomorphism. Note that the components of this map are integer valued homomorphisms and so satisfy Condition $(1)$ and Condition $(2)$. This will allow us to apply the multidimensional analogues of the methods used to prove Theorems \ref{averaging}, \ref{CLT} and $1.3$. We prove that the abelianisation homomorphism satisfies a non-degenerate multidimensional central limit theorem.

\begin{theorem}
Let $G$ be a non-elementary hyperbolic group equipped with a finite generating set $S$. Suppose that $G$ has abelianisation $\mathbb{Z}^k \oplus \text{Torsion }$ for some $k \ge 1$ and that $\varphi: G \to \mathbb{Z}^k$ is the abelianisation homomorphism constructed in the way described above. Then there exists a symmetric, positive definite matrix $\Sigma \in M_k(\mathbb{R})$ such that for $A \subset \mathbb{R}^k$,
$$\frac{1}{\#W_n} \# \left\{ g \in W_n : \frac{\varphi(g)}{\sqrt{n}} \in A \right\} \rightarrow \frac{1}{(2\pi \ \det(\Sigma))^{k/2}} \int_{A} e^{-\langle x , \Sigma x\rangle /2} \ dx $$
as $n\to\infty$.
\end{theorem}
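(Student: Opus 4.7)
The plan is to derive the $k$-dimensional limit law from the one-dimensional central limit theorem (Theorem \ref{CLT}) via the Cramér--Wold device. For any vector $t \in \mathbb{R}^k$ set $\psi_t(g) := \langle t, \varphi(g)\rangle$. Since each coordinate of $\varphi$ is an integer-valued group homomorphism and group homomorphisms satisfy Condition $(1)$ and Condition $(2)$, the real linear combination $\psi_t : G \to \mathbb{R}$ also satisfies both conditions (the classes defined by these conditions are stable under linear combinations at the level of the symbolic lift). Theorem \ref{CLT} will therefore apply to $\psi_t$ once we identify its averaging constant and verify non-degeneracy.

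Identifying the centring is straightforward. The word length is invariant under $g \mapsto g^{-1}$ so $W_n = W_n^{-1}$, and $\psi_t$ being a homomorphism yields $\psi_t(g^{-1}) = -\psi_t(g)$; hence $\sum_{g \in W_n} \psi_t(g) = 0$ for every $n$, which by Theorem \ref{averaging} forces the averaging constant $\Lambda_t$ to vanish. Unboundedness is also immediate: by construction $\varphi(G) = \mathbb{Z}^k$, so for every $t \ne 0$ the image $\psi_t(G) \subset \mathbb{R}$ is a non-trivial subgroup and hence unbounded. Theorem \ref{CLT} therefore produces some $\sigma_t^2 > 0$ and a uniform Berry--Esseen estimate for the one-dimensional distribution of $\psi_t(g)/\sqrt{n}$ on $W_n$.

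To upgrade this to a joint limit law I would consider the empirical characteristic function
$$\chi_n(t) := \frac{1}{\# W_n} \sum_{g \in W_n} \exp(i \langle t, \varphi(g) \rangle/\sqrt{n}).$$
For each fixed $t \in \mathbb{R}^k$ the one-dimensional CLT gives $\chi_n(t) \to e^{-\sigma_t^2/2}$. The quantity $\sigma_t^2$ arises as the asymptotic variance of the symbolic lift of $\psi_t$ in the thermodynamic formalism of the subshift coding $G$, and because that lift is linear in $t$, the variance depends quadratically on $t$; writing $\sigma_t^2 = \langle t, Q t\rangle$ for a symmetric matrix $Q$, the strict positivity for all $t \ne 0$ makes $Q$ positive definite. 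The pointwise limit $t \mapsto \exp(-\tfrac{1}{2}\langle t, Q t\rangle)$ is continuous at the origin, so Lévy's continuity theorem gives weak convergence of the empirical distributions of $\varphi(g)/\sqrt{n}$ on $W_n$ to the centred Gaussian on $\mathbb{R}^k$ with covariance matrix $Q$. Identifying $\Sigma$ with the inverse of $Q$ (up to the normalisation convention chosen in the statement) recovers the Gaussian integral above.

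The main technical obstacle I anticipate is non-degeneracy. While unboundedness of each individual $\psi_t$ is cheap, the proof of Theorem \ref{CLT} derives $\sigma_t^2 > 0$ from a cohomological non-triviality condition on the symbolic lift of $\psi_t$, and ruling out a rank drop of $Q$ amounts to checking that no non-zero real linear combination of the components of $\varphi$ is cohomologous, on the shift $(\Sigma,\sigma)$ coding $G$, to a coboundary plus a multiple of the roof function. That this fails only on $t = 0$ should follow from $\varphi(G)$ having full rank $k$ in $\mathbb{Z}^k$, since any such cohomology relation would produce an $\mathbb{R}$-linear dependence among the coordinate homomorphisms modulo a linear functional on lengths, contradicting $\Lambda_t = 0$ together with full rank; but pushing this argument cleanly through the (non-canonical) symbolic encoding is where most of the work will lie.
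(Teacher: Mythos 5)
Your proposal is correct, but it takes a genuinely different route from the paper. The paper does not reduce to the one-dimensional statement: in Section 10 it redevelops the machinery in the multidimensional setting, showing that the operators $L_{C_i,\langle s,f\rangle}$ have analytic multivariable pressure functions whose gradient and Hessian are independent of the maximal component (Propositions $10.1$--$10.3$), proving a general multidimensional central limit theorem (Theorem \ref{mdlt2}) by writing the $k$-dimensional characteristic functions in terms of transfer operators and invoking L\'evy continuity, and only then specialising to the abelianisation map; the specialisation uses exactly your two observations ($\Lambda_\varphi=0$ by the symmetry $W_n=W_n^{-1}$ together with Theorem \ref{averaging}, and positive definiteness because $\langle t,\varphi\rangle$ is a non-trivial homomorphism for $t\neq 0$, hence unbounded). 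Your Cram\'er--Wold reduction buys economy: it recycles Theorem \ref{CLT} applied to $\psi_t=\langle t,\varphi\rangle$ and avoids re-running the operator analysis, at the price of giving only the limit law rather than the uniform multidimensional control the paper's operator estimates could provide. What it does not entirely avoid is the one genuinely multidimensional input, namely that $t\mapsto\sigma_t^2$ is a quadratic form $\langle t,Qt\rangle$ with a well-defined symmetric $Q$; this requires either the multivariable analyticity of the pressure (the content of the paper's Proposition $10.1$ and Proposition $10.2$) or a polarization argument for the asymptotic covariances $\lim_n n^{-1}\int f_a^n f_b^n\,d\mu_i$ on a maximal component, using that the lifts of all coordinates of $\varphi$ can be taken over one common graph $\mathcal{G}$ so that the lift of $\psi_t$ is linear in $t$; this is standard but deserves more than the clause you give it. Finally, the obstacle you flag in your last paragraph is already discharged by your own argument: the cohomological non-degeneracy on the shift is precisely what Lemma \ref{cohomcond} translates into unboundedness of $\psi_t(\cdot)-\Lambda_t|\cdot|$ on $G$ (cf.\ Lemma \ref{homomcond}), so once $\psi_t$ is a non-trivial homomorphism with $\Lambda_t=0$ you get $\sigma_t^2>0$ with no further work through the non-canonical coding, and no rank drop of $Q$ is possible; also note that weak convergence yields the stated count asymptotics for continuity sets $A$, which is the same level of precision as the paper's own statement and proof.
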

\noindent This result generalises the work of Rivin, who, in \cite{riv}, proves the above theorem for free groups.\\ \indent

In the last section, we consider more subtle distributional results than those in the above theorems. That is, we prove local central limit theorems. To obtain these results we need to understand the arithmetic properties of the images of the functions $\varphi: G \to \mathbb{R}$ that we consider. To gain this understanding, we need to assume that  $\varphi$ satisfies some additional properties. Our methods and therefore results do not apply to all maps satisfying Condition $(1)$ and Condition $(2)$. We obtain the following local limit theorem for group homomorphisms to $\mathbb{R}$.
\begin{theorem}\label{ndllt}
Suppose $G$ is a non-elementary hyperbolic group equipped with a finite generating set. Let $\varphi:G\to\mathbb{R}$ be a group homomorphism that has a dense image in $\mathbb{R}$. Then, Theorem \ref{CLT} holds and we obtain $\sigma >0$ such that any $a,b \in \mathbb {R}$ with $a<b$,
$$\frac{1}{\#W_n} \# \{ g \in W_n: \ \varphi(g)  \in [a,b]\} \sim \frac{b-a}{\sqrt{2\pi}\sigma \sqrt{n}}$$
as $n \to \infty$.
\end{theorem}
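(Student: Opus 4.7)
The plan is to apply the Fourier inversion / twisted transfer operator approach to local limit theorems (in the style of Guivarc'h--Hardy, Rousseau-Egele, Parry--Pollicott) to the Markov coding $(\Sigma,\sigma)$ of $G$ already employed for Theorem~\ref{CLT}. First, I would lift $\varphi$ to a H\"older potential $f:\Sigma\to\mathbb{R}$ so that $\varphi(g)$ equals the Birkhoff sum $f^{n}:=\sum_{k=0}^{n-1}f\circ\sigma^{k}$ along the coding word of $g$; because $\varphi$ is a group homomorphism and the generating set is symmetric, the drift $\Lambda$ of Theorem~\ref{averaging} vanishes, so the Gaussian promised by Theorem~\ref{CLT} is centred at $0$.

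For a smooth, compactly supported approximation $h_{\epsilon}$ to $\mathbf{1}_{[a,b]}$, Fourier inversion gives
$$
\sum_{g \in W_{n}} h_{\epsilon}(\varphi(g)) = \int_{\mathbb{R}} \widehat{h_{\epsilon}}(t) \Bigl( \sum_{g \in W_{n}} e^{-it\varphi(g)} \Bigr)\, dt ,
$$
and the inner characteristic sum can be rewritten in terms of iterates of the twisted Ruelle operator $\mathcal{L}_{\psi-itf}$, where $\psi$ is the H\"older potential governing the growth of $\#W_{n}$. Splitting the $t$-integral into $|t|\le\delta$ and $|t|>\delta$, analytic perturbation theory of $\mathcal{L}_{\psi-itf}$ around the simple maximal eigenvalue of $\mathcal{L}_{\psi}$ produces a leading eigenvalue $\lambda(t)$ with $\log\lambda(t)=\log\lambda(0)-\sigma^{2}t^{2}/2+O(t^{3})$; after the change of variables $t=s/\sqrt{n}$, this yields the Gaussian density evaluated at zero, and hence the factor $1/(\sqrt{2\pi}\sigma)$. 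The tail $|t|>\delta$ decays exponentially in $n$ provided the spectral radius of $\mathcal{L}_{\psi-itf}$ is strictly less than $\lambda(0)$ there.

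The main obstacle is establishing this non-arithmeticity. Were the spectral radius to equal $\lambda(0)$ for some $t\ne 0$, standard Livšic-type rigidity in thermodynamic formalism would furnish $c\in\mathbb{R}$ and a H\"older $u:\Sigma\to\mathbb{R}$ satisfying the cocycle equation $tf=c+u\circ\sigma-u\pmod{2\pi\mathbb{Z}}$. Evaluating along closed orbits in the coding graph, this forces the $\varphi$-values of the corresponding group elements to lie in the countable set $(c/t)\mathbb{Z}+(2\pi/t)\mathbb{Z}$. Using that $\varphi$ is a homomorphism (so $\varphi(G)$ is a subgroup of $\mathbb{R}$) and that loops at the initial vertex, possibly concatenated across the recurrent components of the Cannon automaton, exhaust cosets that generate $\varphi(G)$, this would imply $\varphi(G)$ is contained in a proper closed subgroup of $\mathbb{R}$, contradicting the density hypothesis. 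This verifies (b) and hence the spectral gap.

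Finally, removing the smoothing as $\epsilon\to 0$ (via a Fej\'er-kernel argument) and dividing by $\#W_{n}$, whose asymptotic $\#W_{n}\sim C\lambda(0)^{n}$ is already part of the framework used for Theorem~\ref{averaging}, produces the stated asymptotic $(b-a)/(\sqrt{2\pi}\sigma\sqrt{n})$. I expect the hardest step to be the transfer of the cohomological constraint above to a constraint on $\varphi(G)$ itself: because closed loops in the coding graph need not be based at a single vertex and the correspondence between paths and group elements is not a bijection, one must either argue component by component in the spectral decomposition of $\mathcal{L}_{\psi}$ or exploit the transitivity properties of the Cannon automaton to reduce to a single irreducible component before invoking the density of $\varphi(G)$.
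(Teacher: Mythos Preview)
Your overall architecture---Fourier inversion, twisted transfer operators, quadratic expansion of the top eigenvalue for $|t|\le\delta$, spectral gap for $|t|>\delta$, smoothing removal---matches the paper's proof exactly. The genuine gap is in the non-arithmeticity step, and it is precisely the step you flagged as hardest.

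The Liv\v{s}ic obstruction you derive says that for periodic points (equivalently, for elements $g$ of a loop semigroup $L_v$) one has $\varphi(g)-a|g|\in b\mathbb{Z}$, where $a=c/t$ and $b=2\pi/t$. This places $\varphi(g)$ in the rank-two abelian group $a\mathbb{Z}+b\mathbb{Z}$, but such a group is typically \emph{dense} in $\mathbb{R}$, not a proper closed subgroup. So even granting that loop elements generate $\varphi(G)$ up to finite data, you cannot conclude that $\varphi(G)$ lies in a proper closed subgroup; the density hypothesis on $\varphi(G)$ is simply not contradicted. (Concretely: if $\varphi(G)=\mathbb{Z}+\sqrt{2}\,\mathbb{Z}$, nothing in your argument rules out the lattice relation with $a\mathbb{Z}+b\mathbb{Z}\supseteq\varphi(G)$.)

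The paper circumvents this by a counting argument rather than a structural one. If $f$ were lattice on a maximal component $B_j$, the growth rate of periodic orbits forces
\[
\limsup_{n\to\infty}\frac{1}{\#W_{Dn}}\,\#\{g\in W_{Dn}:\varphi(g)-aDn\in b\mathbb{Z}\}>0
\]
for suitable $D$. The paper then shows this proportion must tend to zero, splitting into two cases. If $\varphi(G)$ contains a rationally independent triple, then $H_{a,b}:=\varphi^{-1}(a\mathbb{Z}\oplus b\mathbb{Z})$ has infinite index in $G$, and the Gou\"ezel--Math\'eus--Maucourant theorem on growth of infinite-index subgroups gives density zero. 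If instead $\varphi(G)=c\mathbb{Z}\oplus d\mathbb{Z}$ has $\mathbb{Q}$-rank exactly $2$ (the minimal rank compatible with density), the paper projects to an integer-valued homomorphism and invokes its own Theorem~\ref{CLT} (via Corollary~\ref{homomcoro}) or Theorem~\ref{ldt} to obtain density zero. Your route would need a replacement for this rank-two case; as written, it does not supply one.
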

In \cite{sharplimit} Sharp studies local limit theorems for homomorphisms $\varphi:G \to \mathbb{Z}$ (where $G$ is a free group). In this work we are interested in the complementary case in which the image of $\varphi : G \to \mathbb{R}$ is dense in $\mathbb{R}$. We show that, in a natural sense, almost all group homomorphisms satisfy the hypotheses of Theorem \ref{ndllt}. After proving this result we obtain a further local limit theorem for the displacement function associated to convex cocompact actions on pinched Hadamard surfaces. We defer the statement of this result until the last section.\\ \indent

To conclude the introduction, we briefly outline the contents of this paper. In the second section we cover preliminary material from thermodynamic formalism and geometric group theory. In the third section, we study the regularity conditions required for functions to satisfy our theorems. In Section 4 we introduce and study the spectral theory of certain transfer operators. This is where we appeal to the work of Calegari and Fujiwara \cite{cf} to deduce key properties of these operators. We then, in Section $5$, study positive variance condition required for our central limit theorem. The subsequent sections are dedicated to proving our results.\\ \indent
The work presented in this paper will form part of the author's PhD thesis at the University of Warwick. The author would like to thank Richard Sharp for useful discussions, comments and suggestions.\\ 

\noindent \textit{Notation:} Throughout the paper we use the following notation to describe the asymptotic behaviour of sequences. For $f,g : \mathbb{Z}_{\ge 0} \rightarrow \mathbb{R}$, we write $f \sim g$ if $f(n)/g(n) \to 1$ as $n \to \infty$. If $f(n)/g(n) \to 0$ as $n \to \infty$ then we write $f = o(g)$ and if there exists $K>0$ such that eventually $|f(n)| \le K|g(n)|$, then we write $f = O(g)$. If $f = O(g)$ and $g=O(f)$ we write $f = \Theta(g)$.

\section{Preliminaries}

\subsection{Thermodynamic Formalism and Subshifts of Finite Type}
We recall some basic material from the branch or ergodic theory known as thermodynamic formalism. For a comprehensive account see \cite{pp}. Let $A$ be a $k \times k$ matrix consisting of zeros and ones. We use the notation $A_{i,j}$ to denote the $(i,j)$th entry of $A$.
\begin{definition}
We say that a $k \times k$ zero-one matrix $A$ is irreducible if given $(i,j)$ ($i,j \in \{1,2,...,k\}$), there exists $n\in \mathbb{N}$ such that $(A^n)_{i,j} >0$. We say that $A$ is aperiodic if there exists $n \in \mathbb{N}$ such that $(A^n)_{i,j} > 0$ for all $i,j$. 
\end{definition}
The shift space, or subshift of finite type, associated to $A$, is the space
$$\Sigma_A = \{(x_n)_{n=0}^{\infty} : x_n \in \{1,2,...,k\}, A_{x_n,x_{n+1}}=1, n \in \mathbb{Z}_{\ge 0}\}.$$
Given $x$ in $\Sigma_A$, $x_n$ denotes the $n$th coordinate of $x$. The shift map $\sigma: \Sigma_A \rightarrow \Sigma_A$ sends $x$ to $y= \sigma(x)$ where $y_n=x_{n+1}$ for all $n \in \mathbb{Z}_{\ge 0}$. For each $0<\theta <1$, we define a metric $d_\theta$ on $\Sigma_A$. Take $x,y \in \Sigma_A$.  If $x_0 = y_0$ we set $d_\theta(x,y)= \theta^N,$ where $N$ is the largest positive integer such that $x_i = y_i$ for all $0 \le i<N$. If $x_0 \neq y_0$ we set $d(x,y)=1$. We then define
$$F_\theta(\Sigma_A) = \{f:\Sigma_A \rightarrow \mathbb{C}:\text{$f$ is Lipschitz with respect to $d_\theta$}\}.$$
For the rest of this section we denote $F_\theta(\Sigma_A)$ by $F_\theta$ to simplify notation. We equip $F_\theta$ with the norm $\|f\|_\theta = |f|_\theta + |f|_\infty,$ where $|f|_\infty$ is the sup-norm and $|f|_\theta$ denotes the least Lipschitz constant for $f$. The space $(F_\theta, \|\hspace{2mm} \|_\theta)$ is a Banach space. We say that $f,g \in F_\theta$ are cohomologous (denoted by $f \sim g$) if there exists a continuous function $h:\Sigma_A \rightarrow \mathbb{C}$ such that $f=g+h\circ \sigma - h.$ We have the following characterisation for a function $f \in F_\theta$ being cohomologous to a constant. The set $\{f^n(x) - Cn: x\in \Sigma_A\}$ is bounded if and only if $f$ is cohomologous to $C$. Here $f^n(x) := f(x) + f(\sigma(x))+...+f(\sigma^{n-1}(x))$.\\ \indent
Suppose that $A$ is aperiodic. Given $f \in F_\theta$, we define the pressure of $f$ by
$$P(f) = \sup_{m} \left\{ h_m(\sigma) + \int f dm \right\},$$
where $h_m(\sigma)$ denotes the entropy of $\sigma$ with respect to $m$ and the supremum is taken over all $\sigma$-invariant probability measures. This supremum is uniquely attained by a measure called the equilibrium state of $f$. The equilibrium state for the zero constant function is the measure of maximal entropy and the topological entropy of $(\Sigma,\sigma)$ is given by $P(0)$. For real valued $f \in F_\theta$ the map $s \mapsto P(sf)$ is real analytic and extends to a complex analytic function in a neighbourhood of the real line.\\ \indent
The following operators, known as transfer operators, play a key role in the analysis used in this paper.  
\begin{definition}
Take $f \in F_\theta$, we define the transfer operator $L_f: F_\theta \rightarrow F_\theta$ by
$$L_fw(x)=\sum_{\sigma y =x} e^{f(y)} w(y).$$
\end{definition}
The transfer operator has a variety of useful properties. We are interested in the spectral properties of these operators. In the case that $A$ is aperiodic and $f \in F_\theta$ is real valued, the spectrum of $L_f$ contains a real simple maximal eigenvalue $\lambda >0$. The rest of the spectrum is contained in the disk $\{z \in \mathbb{C}: |z|<\lambda - \delta\}$ for some $\delta>0$. Using the spectral properties of $L_f$ and perturbation theory, one can show that
$$\frac{dP(sf)}{ds}  \Big|_{s=0} = \int f \ d\mu$$
and
$$\sigma^2 := \frac{d^2P(sf)}{ds^2}  \Big|_{s=0} = \lim_{n \to \infty} \frac{1}{n} \int \left(f^n - \int f d\mu\right)^2 d\mu ,$$
where $\mu$ is the measure of maximal entropy. The quantity $\sigma^2$ is strictly positive if and only if $f$ is not cohomologous to a constant. Using these identities one obtains the following expression for pressure (see \cite{cp}), which is valid for complex $s$ in a neighbourhood $U$ of $0$:
\begin{equation} \label{eq:1}
P(sf) = P(0) + s \int f d\mu +  s^2\sigma^2/2 + s^3 \psi(s),
\end{equation}
 where $\psi$ is analytic in $U$.\\ \indent
Now suppose that $A$ is irreducible but not aperiodic. There exists a natural number $p>1$ known as the period of $A$ such that $\Sigma_A$ has $p$-cyclic decomposition
$$\Sigma_A = \bigsqcup_{k=0}^{p-1} \Sigma_{A_k}.$$
The shift map sends $\Sigma_{A_j}$ to $\Sigma_{A_{j+1}}$ where $j, j+1$ are taken modulo $p$. Furthermore, for each $j$, $\sigma^p : \Sigma_{A_j} \to \Sigma_{A_j}$ is mixing. The transfer operator $L_0: F_\theta \to F_\theta$ (i.e. where $0$ denotes the zero valued constant function) has spectrum containing $p$ simple maximal eigenvalues at $e^{2\pi i k/p} e^h$ for $k = 0,...,p-1$. The rest of the spectrum is contained in the disk $\{z:|z|<e^{h}-\delta\}$ for some $\delta>0$. The constant $h$ is the topological entropy of $(\Sigma,\sigma)$ and is obtained, as in the case when $A$ is aperiodic, from the variational expression
$$h = \sup_{m}\{h_m(\sigma)\},$$
where the above supremum is taken over all $\sigma$-invariant probability measures. This supremum is attained uniquely by the measure of maximal entropy.

\subsection{Hyperbolic Groups and the Strongly Markov Property}
In this section we recall classical properties of hyperbolic groups. The concept of hyperbolicity was introduced by Gromov in his fundamental paper \cite{grom}. For a good account of the theory concerning hyperbolic groups, see \cite{gh}.
\begin{definition}
Let $(X,d)$ be a metric space. We say that $X$ is hyperbolic if there exists a constant $\delta \ge 0$ such that given any geodesic triangle $xyz$ in $X$, the side $xy$ is contained in the union of the $\delta$-neighbourhoods of the other two sides, $yz$ and $zx$.
A finitely generated group $G$ is hyperbolic (in the sense of Gromov) if for any finite generating set $S$ for $G$, the Cayley graph of $G$ with respect to $S$ is hyperbolic when equipped with the path metric. 
\end{definition}
A hyperbolic group is non-elementary if it is not virtually cyclic, i.e. it does not contain a finite index cyclic subgroup. In this paper we are only interested in non-elementary hyperbolic groups. All groups labeled $G$ are assumed to be non-elementary hyperbolic groups. Given an element $g \in G$, we use $|g|$ to denote the word length of $g$: the length of the shortest word(s) representing $g$ with letters in $S\cup S^{-1}$. Let $W_n$ denote the set consisting of group elements of word length $n$. We define the left and right word metrics on $G$ as follows.
\begin{definition}
The left and right word metrics on $G$ are
$$d_L(g,h) = |g^{-1}h| \hspace{0.5 cm} \text{and} \hspace{0.5cm} d_R(g,h)= |gh^{-1}|$$
respectively.
\end{definition}
Throughout this paper we will require some techniques from Patterson-Sullivan theory. We recall some basic facts about the boundaries of hyperbolic groups and the Patterson-Sullivan measure. \\
\indent Let $C(G)$ denote the Cayley graph of $G$ with respect to $S$. An infinite geodesic ray $\gamma$ is an infinite path in $C(G)$ such that any finite sub-path of $\gamma$ is a geodesic in $C(G)$. Given such a geodesic ray $\gamma$, let $\gamma_n$ denote the element in $G$ corresponding to the end point of $\gamma$ after $n$ steps. Two geodesic rays $\gamma, \gamma'$ are said to be equivalent if $d_L(\gamma_n,\gamma'_n)$ is bounded uniformly for $n \in \mathbb{Z}_{\ge 0}$. The Gromov boundary, $\partial G$ of $G$, is the set of equivalence classes of infinite geodesic rays in $C(G)$. The boundary $\partial G$ supports a natural (metrizable) topology. With this topology, $G \cup \partial G$ becomes the compactification of $G$ (with the topology given by the word metric). Given a geodesic ray $\gamma$ let $[\gamma] \in \partial G$ denote the equivalence class containing $\gamma$. The action of $G$ extends to $G \cup \partial G$ by sending $[\gamma] \in \partial G$ to $[g\gamma] \in \partial G$. \\
\indent The Patterson-Sullivan measure $\nu$ is a measure on $G \cup\partial G$ that is supported on $\partial G$. It is obtained as a weak star limit, as $n \to \infty$, of the following sequence of measures
$$\frac{\sum_{|g|\le n} \lambda^{-|g|}\delta_g}{\sum_{|g|\le n} \lambda^{-|g|}}.$$
Here $\lambda =\limsup_{n\to \infty} (\#W_n)^{1/n}$ is the exponential growth rate of $\#W_n$ and $\delta_g$ denotes the Dirac measure based at $g \in G$.
The measure $\nu$ enjoys many useful properties and in particular is ergodic with respect to the action of $G$ on $\partial G$. For a comprehensive account of the above material, see \cite{coor} and \cite{patcat}. \\

 \indent Hyperbolic groups have interesting combinatorial properties. One of the reasons for this is their strongly Markov structure: a hyperbolic group can be represented by a finite directed graph with useful properties.

\begin{definition}
A group $G$ is strongly Markov if given any generating set $S$ for $G$, there exists a finite directed graph $\mathcal{G}$ with vertex set $V$ and directed edge set $E \subset V \times V$ that exhibits the following properties:
\begin{enumerate}
\item $V$ contains a vertex $\ast$ such that $(x,\ast)$ does not belong to $E$ for any $x \in V$,
\item there exists a labeling $\rho :E \to S$ such that the map sending a path (starting at $\ast$) with concurrent edges $(\ast,x_0),(x_0,x_1),\ldots , (x_{n-1},x_n)$ to the group element $\rho(\ast,x_0) \rho(x_0,x_1) \ldots \rho(x_{n-1},x_n),$ is a bijection,
\item the above bijection preserves word length; if $|g|=n$, then the finite path corresponding to $g$ has length $n$.
\end{enumerate}
\end{definition}
We augment the above directed graph by adding an extra vertex, $0$. We add directed edges from each vertex in $V \backslash \{\ast\}$ to $0$ and also from $0$ to itself. We extend the labeling $\lambda$ to these new edges by $\rho(x,0) = e$ (the identity element in $G$) for all $x \in V \cup \{0\} \backslash \{\ast\}$.\\ \indent
Cannon proved that cocompact Kleinian groups are strongly Markov. Ghys and de la Harpe showed that Cannon's approach worked for all hyperbolic groups. The augmentation method described above was first used by Lalley \cite{lal} to facilitate the use of thermodynamic formalism.
\begin{proposition} [Cannon \cite{can}, Ghys and de la Harpe \cite{gh}]
Any hyperbolic group is strongly Markov.
\end{proposition}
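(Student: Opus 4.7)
The plan is to construct the required finite directed graph $\mathcal{G}$ using Cannon's theory of \emph{cone types}. For $g \in G$, I define the cone type
$$C(g) = \{h \in G : |gh| = |g| + |h|\},$$
that is, the set of $h$ such that some geodesic from the identity to $gh$ passes through $g$. The vertices of $\mathcal{G}$ will be the cone types appearing in $G$, together with a distinguished start vertex $*$. An edge labeled $s \in S \cup S^{-1}$ will run from the vertex for $C(g)$ to that for $C(gs)$ whenever $|gs| = |g| + 1$; I would first check that this relation depends only on $C(g)$ and $s$, not on the chosen representative $g$, so that the graph is well-defined.

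The crucial step, where hyperbolicity enters, is to show that $G$ has only finitely many cone types. My approach is to prove that $C(g)$ is determined by the finite piece of local data
$$\bigl\{h \in G : |h| \le N, \ |gh| \le |g|\bigr\},$$
where $N$ depends only on the hyperbolicity constant $\delta$. The key geometric input is a local-to-global principle for geodesics in hyperbolic graphs: a path is geodesic provided all its sub-paths of bounded length (depending on $\delta$) are geodesics. This lets one reduce the a~priori infinite condition ``$h \in C(g)$'' to a bounded-radius check near $g$, and since there are only finitely many such local patterns of elements within distance $N$ of $e$, finiteness of cone types follows.

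Granted finite cone types, conditions (1)--(3) of the strongly Markov definition fall out of the construction. Property (1) is imposed by declaring $*$ to have no incoming edges and adding edges labeled $s$ from $*$ to the vertex for $C(s)$, for each $s \in S \cup S^{-1}$. For (2), I fix a linear order on $S \cup S^{-1}$ and associate to each $g \in G$ its lexicographically minimal geodesic spelling; one then selects a subgraph of $\mathcal{G}$ (by pruning edges in a state-dependent way) recognising precisely these canonical spellings, using the fact that finiteness of cone types makes the set of geodesic words a regular language. Property (3) is immediate, since each edge contributes exactly one generator to the spelling.

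The main obstacle is Cannon's finiteness theorem for cone types, since this is the one genuinely geometric input and the only place that the hyperbolicity constant is used non-trivially; the remainder of the argument is combinatorial bookkeeping. A secondary technical point, which I would verify after establishing finiteness, is the well-definedness of cone-type transitions --- that $C(g_1) = C(g_2)$ together with $|g_1 s| = |g_1| + 1$ forces $|g_2 s| = |g_2| + 1$ and $C(g_1 s) = C(g_2 s)$ --- but this drops out of the finite-data characterisation of $C(g)$ used in the previous paragraph.
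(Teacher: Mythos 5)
The paper does not prove this proposition at all --- it is quoted from Cannon and from Ghys--de la Harpe, and your outline follows exactly the standard cone-type argument from those references, so in spirit you are reconstructing the cited proof rather than diverging from the paper. The finiteness-of-cone-types step is sketched correctly (the $N$-level data $\{h : |h|\le N,\ |gh|\le|g|\}$ with $N$ depending only on $\delta$ does determine $C(g)$), and the well-definedness of transitions is even easier than you suggest: $h\in C(gs)$ iff $sh\in C(g)$ and $|sh|=1+|h|$, so $C(gs)$ is determined by $C(g)$ and $s$ directly from the definition, with no appeal to the finite-data characterisation.

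The genuine gap is in your treatment of property (2), the bijectivity. Finiteness of cone types gives you a finite graph whose paths from $\ast$ are exactly the geodesic words, i.e.\ a regular language surjecting onto $G$, but it does not by itself let you cut down to unique representatives: whether a geodesic word is the lexicographically least spelling of its element is not a function of the current cone type, so this cannot be achieved by ``pruning edges of $\mathcal{G}$ in a state-dependent way.'' Moreover, regularity of the geodesic language alone does not imply regularity of the set of lex-least geodesic representatives; for that you need a second genuinely geometric input, namely the (synchronous) fellow-traveller property: two geodesic words representing the same element stay uniformly close (in terms of $\delta$), so that equality of endpoints can be tested by a finite-state device tracking the bounded word-difference $u_i^{-1}v_i$. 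The correct construction therefore builds a new automaton whose states record a cone type together with bounded difference data (equivalently, invokes the ShortLex automatic structure of hyperbolic groups), and it is this larger graph, suitably trimmed and given a start vertex $\ast$, that witnesses the strongly Markov property. So your claim that finiteness of cone types is ``the only place hyperbolicity is used non-trivially'' and that the rest is bookkeeping is where the argument as written falls short; with the fellow-traveller input added, the proof goes through as in Cannon and Ghys--de la Harpe.
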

Throughout the rest of this paper, given a hyperbolic group $G$ with generating set $S$, we use $\mathcal{G}$ to denote a directed graph associated to $G$ via the strongly Markov property. We will always assume that such $\mathcal{G}$ has been augmented, to include the $\ast$ and $0$ vertices, in the way described above. We note that $G$ can admit infinitely many different graphs satisfying the properties in Definition $2.5$.\\ \indent
This strongly Markov structure makes hyperbolic groups susceptible to analysis through the use of thermodynamic formalism and subshifts of finite type. Let $G$ be a hyperbolic group with associated directed graph $\mathcal{G}$. Labeling the vertices of $\mathcal{G}$, $1,...,k$, we can describe $\mathcal{G}$ by a $k\times k$ zero-one matrix $A$. We set the $(i,j)$th entry of $A$ to be $1$ if and only if there exists an edge from vertex $i$ to vertex $j$. We call $A$ the transition matrix associated to $\mathcal{G}$. We can then embed $G$ into the shift space $\Sigma_A$ via the function $i: G  \to \Sigma_A$ defined by
$$i(g) = (\ast,x_0,x_1,\ldots,x_{n-1},0,0,\ldots),$$
where  $(\ast,x_0),(x_0,x_1),\ldots , (x_{n-2},x_{n-1})$ is the unique shortest path in $\mathcal{G}$ corresponding to $g$ and $|g| = n$. We use the notation $\dot{0}$ to denote the sequence in $\Sigma_A$ consisting of only zeros.\\ \indent
Property $(3)$ from Definition $2.5$ implies that the cardinality of $W_n$, denoted by $\# W_n$, is given by the number of length $n$ paths in $\mathcal{G}$ starting at $\ast$. Coornaert proved that the growth of $\# W_n$ is purely exponential \cite{coor}.

\begin{proposition} [Coornaert \cite{coor}]
There exists $C_1,C_2 >0$, $\lambda>1$ such that for all $n \ge 1$,
$$  C_1 \lambda^n \le \# W_n \le C_2 \lambda^n.$$
\end{proposition}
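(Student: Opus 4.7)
My plan is to prove this via the Patterson--Sullivan construction sketched in the preliminaries. Define $\lambda = \limsup_{n\to\infty}(\#W_n)^{1/n}$, which will be the target exponent. Note first that $\lambda > 1$: any non-elementary hyperbolic group contains a non-abelian free subgroup and so already has exponential word growth. Using the standard Patterson trick (inserting a slowly varying factor to force divergence of the series $\sum_g \lambda^{-|g|}$ should it converge), one obtains $\nu$ as a weak-$\ast$ limit supported on $\partial G$ that is $\lambda$-quasiconformal: for each $g \in G$, the translate $g_\ast \nu$ is absolutely continuous with respect to $\nu$ with Radon--Nikodym derivative comparable to $\lambda^{-2(g|\xi)_e}$, expressed in terms of the Gromov product, up to a bounded multiplicative error depending only on the hyperbolicity constant $\delta$.

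The geometric heart of the proof is the shadow lemma: there exist $r_0 > 0$ and $C > 0$ such that for every $g \in G$ the shadow
$$O_{r_0}(g) = \{[\gamma] \in \partial G : \gamma \text{ is a geodesic ray from } e \text{ passing within } r_0 \text{ of } g\}$$
satisfies
$$C^{-1}\lambda^{-|g|} \le \nu(O_{r_0}(g)) \le C\lambda^{-|g|}.$$
The upper bound follows from quasiconformality applied to the translate by $g^{-1}$, once one observes that $O_{r_0}(g)$ lies in a visual ball of bounded radius based at $g$. The lower bound uses non-elementarity in an essential way: two independent loxodromic elements of $G$ with well-separated fixed points produce a compact subset of $\partial G$ of positive $\nu$-mass whose $g$-translate can be shown to sit inside $O_{r_0}(g)$.

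The proposition then follows by a covering argument. Every infinite geodesic ray emanating from $e$ must pass through some element of $W_n$, so the shadows $\{O_{r_0}(g)\}_{g \in W_n}$ cover $\partial G$; conversely, $\delta$-hyperbolicity bounds the multiplicity of this cover by a constant $K = K(\delta, r_0)$ (two elements of $W_n$ whose shadows overlap must lie within bounded distance of one another). Summing the shadow lemma over $W_n$ and using $\nu(\partial G) = 1$ yields simultaneously $\#W_n \ge C_1\lambda^n$ and $\#W_n \le C_2\lambda^n$. The main obstacle is the shadow lemma, and within it the lower bound: it is conceptually standard, but requires careful bookkeeping in $\delta$-hyperbolic geometry, and is the one place where non-elementarity enters as more than a convenience.
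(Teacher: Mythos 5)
The paper offers no proof of this proposition --- it is quoted directly from Coornaert --- and your sketch is essentially Coornaert's own argument: build the Patterson--Sullivan measure at the critical exponent, prove a Sullivan-type shadow lemma $\nu(O_{r_0}(g)) \asymp \lambda^{-|g|}$ (with non-elementarity entering only in the lower bound), and then sandwich $\#W_n$ by covering $\partial G$ with the shadows of $W_n$ and bounding the multiplicity of the cover by $\delta$-hyperbolicity; the outline is sound and is the standard route. One small slip worth correcting: the quasiconformality you quote, $dg_*\nu/d\nu(\xi) \asymp \lambda^{-2(g|\xi)_e}$, is not the right cocycle --- it should be the Busemann-type expression $\lambda^{-\beta_\xi(g,e)} \asymp \lambda^{2(g|\xi)_e - |g|}$ --- but since everything downstream uses only the shadow lemma as you correctly state it, this does not affect the argument.
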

Let $B$ denote the matrix $A$ with the columns and rows corresponding to the $\ast$ and $0$ vertices removed.

\begin{definition}
Let $\mathcal{G}$, $A$ and $B$ be as above. We say that $\mathcal{G}$ is aperiodic (or irreducible) if $B$ is aperiodic (or irreducible).
\end{definition}

In general, it is possible that $\mathcal{G}$ is not irreducible. However, in certain cases, for example for surface groups with presentation $\langle a_1,\ldots,a_g,b_1,\ldots,b_g | \prod_{j=1}^g [a_j,b_j] \rangle$ and free groups equipped with their canonical free generating set, $\mathcal{G}$ can be chosen to be aperiodic \cite{cs}. When $\mathcal{G}$ is aperiodic, results from thermodynamic formalism apply more readily. One of the main difficulties in this paper is overcoming the extra difficulties that arise in the case that $\mathcal{G}$ is neither aperiodic nor irreducible\\
\indent Following  \cite{oc}, we can relabel the columns/rows of $B$ to assume that $B$ has the form 

$$B = \begin{pmatrix} 
B_{1,1} & 0 & \dots & 0  \\
B_{2,1} & B_{2,2} & \dots & 0\\
\vdots & \vdots & \ddots & \vdots\\
B_{m,1} & B_{m,2} & \dots & B_{m,m}
\end{pmatrix} ,$$

\noindent where the matrices $B_{i,i}$ are irreducible. The matrices $B_{i,i}$ are known as the irreducible components of $B$ or $\mathcal{G}$. By property $(3)$ in Definition $2.5$ and Proposition $2.7$, the spectral radius of each $B_{i,i}$ is bounded above by $\lambda$, where $\#W_n=\Theta(\lambda^n)$. Furthermore, there must be at least one component that has $\lambda$ as an eigenvalue (otherwise there would be $0<\delta<\lambda$ for which $\# W_n = O((\lambda-\delta)^n)$).

\begin{definition}
We call an irreducible component maximal if its corresponding matrix has spectral radius $\lambda$. 
\end{definition}

An important property of $\mathcal{G}$ is the following.

\begin{lemma} \cite[Lemma 4.10]{cf}
Let $\mathcal{G}$ be a directed graph associated to $G$. The maximal components of $\mathcal{G}$ are disjoint. That is, there does not exist a path in $\mathcal{G}$ from one maximal component to another.
\end{lemma}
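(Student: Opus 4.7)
The plan is a counting argument by contradiction. Assume some pair of distinct maximal components $C_1, C_2$ of $\mathcal{G}$ are connected by a directed path from $u_1 \in C_1$ to $u_2 \in C_2$, say of length $\ell \ge 1$. I will exhibit at least $\Omega(n\lambda^n)$ length-$n$ paths in $\mathcal{G}$ starting at $\ast$, contradicting the upper bound $\#W_n = O(\lambda^n)$ from Proposition 2.7.

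Without loss of generality, every vertex of $\mathcal{G}$ is reachable from $\ast$ (otherwise restrict to the reachable subgraph, which does not change $\#W_n$ and hence preserves $\lambda$ and the set of maximal components). In particular, fix a path from $\ast$ to some $v_1 \in C_1$ of length $a$. I then count length-$n$ paths from $\ast$ of the following form: (i) the fixed length-$a$ initial segment $\ast \to v_1$; (ii) a length-$k_1$ path from $v_1$ to $u_1$ staying inside $C_1$; (iii) the fixed length-$\ell$ connecting segment $u_1 \to u_2$; (iv) a length-$k_2$ path inside $C_2$ starting at $u_2$, where $k_2 = n - a - \ell - k_1$. Since $B_{1,1}$ is irreducible with spectral radius $\lambda$, the Perron-Frobenius theorem in the form valid for periodic irreducible non-negative matrices supplies a lower bound $c_1 \lambda^{k_1}$ for the count in step (ii), whenever $k_1$ is large and lies in a fixed residue class modulo the period $p_1$ of $B_{1,1}$. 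Summing over all endpoints in step (iv) washes out the periodicity of $B_{2,2}$, giving the corresponding lower bound $c_2 \lambda^{k_2}$ for all large $k_2$. The product is $c_1 c_2 \lambda^{n-a-\ell}$, and summing over the $\Theta(n/p_1)$ valid choices of $k_1$ yields at least $\Omega(n\lambda^n)$ length-$n$ paths from $\ast$, contradicting Proposition 2.7.

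The main technical subtlety is the possible periodicity of the components $B_{i,i}$, which prevents a clean one-shot asymptotic $(B_{i,i}^k)_{v,w} \sim c\lambda^k$ uniform in $k$. Restricting step (ii) to a single residue class modulo $p_1$ loses only a constant factor, and the decisive linear factor $n$ in the final count survives---it is precisely this linear factor that is incompatible with the purely exponential growth $\#W_n = \Theta(\lambda^n)$ and closes the contradiction. The only other minor point, the reduction to the subgraph reachable from $\ast$, is harmless because it alters neither $\#W_n$ nor $\lambda$.
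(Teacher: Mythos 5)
Your proposal is correct and is essentially the paper's own argument: assume a connecting path between two maximal components, count length-$n$ paths that spend time $r$ in the first component and $n-\ell-r$ in the second, sum over the $\Theta(n)$ splitting points to get growth of order $n\lambda^n$, and contradict the purely exponential bound of Proposition 2.7. The only difference is that you spell out details the paper leaves implicit (prefixing a path from $\ast$, and handling periodicity of the components via residue classes and summing over endpoints), which is a refinement rather than a different route.
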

\begin{proof}
Let $B_1$ and $B_2$ be maximal components and suppose there is a path $P$ of length $l$ from $B_1$ to $B_2$. Then for $n >l$, the number of length $n$ paths in $\mathcal{G}$ would be at least 
\begin{equation}
 \sum_{r + s = n-l}B_1^r B_2^s,
 \end{equation}
 where $B_1^k$ denotes the number of length $k$ paths contained in $B_1$ ending at the start vertex of $P$ and $B_2^k$ denotes the number of length $k$ paths in $B_2$ starting at the end vertex of $P$. Quantity $(2.2)$ grows like  $n\lambda^n$ which implies that $\#W_n$ grows at least like $n \lambda^n$. This contradicts Proposition $2.7$.
\end{proof}

\section{Regularity for Functions}

In this section we discuss the regularity conditions required for functions to satisfy our theorems. Fix a generating set $S$ for $G$.  Recall that we are interested in functions $\varphi:G \to \mathbb{R}$ that satisfy Condition $(1)$ and Condition $(2)$. These conditions are defined as follows. \medskip \\
 \noindent
\textit{Condition $(1)$} There exists a directed graph $\mathcal{G}$ associated to $G,S$ via the strongly Markov property with transition matrix $A$ and a function $f\in F_\theta(\Sigma_A)$ (for some $0<\theta<1$) such that $\varphi(g) = f(x) + f(\sigma(x)) +...+f(\sigma^{|g|-1}(x))$ for $g \in G \text{ and } x = i(g) \in \Sigma_A.$\medskip \\
 \noindent
\textit{Condition $(2)$} $\varphi$ is Lipschitz in the left and right word metrics on $G$. \medskip

We begin this section by discussing examples of functions that satisfy Condition $(1)$. The first class we consider, is of functions that satisfy the following H\"older condition.
\begin{definition}
We say that a map $\varphi:G \to \mathbb{R}$ is H\" older (for $G$, $S$) if for any fixed $a \in G$ there exists $C  >0$ and $0<\theta <1$ such that
$$|\Delta_a\varphi(g)-\Delta_a\varphi(h)| \le C \theta^{ (g,h)},$$
for any $g,h \in G$. Here, for $a,g \in G$, $\Delta_a\varphi (g)= \varphi(ag)-\varphi(g)$ and $(g,h)$ denotes the Gromov product of $g$ and $h$,
$$(g,h)=\frac{1}{2}\left(|g|+|h| - |g^{-1}h|\right).$$
\end{definition}

Pollicott and Sharp proved that any function satisfying the above H\"older condition for $G$, $S$, satisfies Condition $(1)$ (see Lemma $1$ of \cite{oc}). In fact, they showed that for such functions, one can find an appropriate H\"older function $f: \Sigma_A \to \mathbb{R}$ given any graph $\mathcal{G}$ associated to $G,S$. Inspired by the work of Calegari and Fujiwara, we introduce the following class of functions.
\begin{definition}
Suppose $S$ is symmetric. Given an element $g \in G$, there is a unique path of length $|g|$ in $\mathcal{G}$, starting at $\ast$, that is mapped to $g$ under the bijection defined in part $(3)$ of Definition $2.5$. Let $g_i$ belong to the edge set of $\mathcal{G}$ and let it denote the $i$th edge in the path corresponding to $g$. A map $\varphi: G \to \mathbb{R}$ is called edge combable (with respect to $\mathcal{G}$) if there exists a function $d\varphi$ from the edge set of $\mathcal{G}$ to $\mathbb{R}$ such that, for each $g \in G$,
$$\varphi(g) = \sum_{i=1}^{|g|} d\varphi(g_i).$$
We refer to $d\varphi$ as the (discrete) derivative of $\varphi$.
\end{definition}

\begin{remark}
In \cite{cf} Calegari and Fujiwara define the class of combable functions. These functions are similar to edge combable functions except that the derivative $d\varphi$  is a function from the vertex set of $\mathcal{G}$ to $\mathbb{Z}$. The equation relating $\varphi$ and $d\varphi$ is the same except the sum is taken over the vertices in the path corresponding to $g$. Given a combable function $\varphi$, one can consider $\varphi$ as an edge combable function. To see this, take the derivative $d\varphi$ of $\varphi$ (which is a function defined on the vertex set of $\mathcal{G}$) and define $d\varphi'$ on the edge set of $\mathcal{G}$ to send a directed edge to the value of $d\varphi$ evaluated at the end point of this edge. It is easy to see that $\varphi$ can be considered an edge combable function with derivative $d\varphi'$. Therefore the set of edge combable functions contains the set of combable functions.
\end{remark}

\begin{remark}
Suppose that $\varphi$ is edge combable with respect to $\mathcal{G}$ and that $d\varphi$ is integer valued. Then, we can find a different directed graph $\mathcal{G}'$ that satisfies the properties in Definition $2.5$ and for which $\varphi$ is combable. To see this, consider the following recoding of $\mathcal{G}$ to $\mathcal{G}'$. Define the vertex set for $\mathcal{G}'$ to be the edge set of $\mathcal{G}$ and say that two vertices $u$ and $v$ in $\mathcal{G}'$ are connected by a directed edge from $u$ to $v$ if the edges $e$, $r$ in $\mathcal{G}$ corresponding to $u,v$ are concurrent in $\mathcal{G}$. This process may introduce multiple $\ast$ vertices for $\mathcal{G}'$, however, we can simply identify these vertices to overcome this problem. 
\end{remark}

The above discussions imply that the class of edge combable functions includes combable functions and real valued homomorphisms.

\begin{lemma}
Edge combable functions satisfy Condition $(1)$.
\end{lemma}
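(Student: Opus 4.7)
The plan is to pick the required function $f \in F_\theta(\Sigma_A)$ to be the simplest possible: one that depends only on the first two coordinates of a point in $\Sigma_A$. Given an edge combable $\varphi$ with discrete derivative $d\varphi$ on the edge set of $\mathcal{G}$, I would first extend $d\varphi$ by $0$ to all augmentation edges (those ending at the sink $0$, including the loop $(0,0)$) so that $d\varphi$ is defined on every admissible pair $(v,w)$ with $A_{v,w}=1$. I then set
$$f(x) \;=\; d\varphi\bigl((x_0,x_1)\bigr) \quad \text{for } x=(x_0,x_1,x_2,\ldots)\in \Sigma_A.$$

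Next I would verify the two required properties. Because $f$ factors through the projection to the first two coordinates, any $x,y\in\Sigma_A$ with $x_0=y_0$ and $x_1=y_1$ satisfy $f(x)=f(y)$; hence $f$ is locally constant on $2$-cylinders and in particular $f\in F_\theta(\Sigma_A)$ for every $0<\theta<1$. For the Birkhoff sum identity, fix $g\in G$ with $|g|=n$ and let $(\ast,x_0),(x_0,x_1),\ldots,(x_{n-2},x_{n-1})$ be the unique length-$n$ path in $\mathcal{G}$ from $\ast$ corresponding to $g$, so that $i(g)=(\ast,x_0,x_1,\ldots,x_{n-1},0,0,\ldots)$. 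For each $0\le j\le n-1$, the first two coordinates of $\sigma^{j}(i(g))$ are exactly the endpoints of the $(j+1)$-st edge $g_{j+1}$ of this path, so $f(\sigma^{j}(i(g)))=d\varphi(g_{j+1})$. Summing gives
$$\sum_{j=0}^{n-1} f\bigl(\sigma^{j}(i(g))\bigr) \;=\; \sum_{i=1}^{n} d\varphi(g_i) \;=\; \varphi(g),$$
which is precisely Condition $(1)$.

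No real obstacle is anticipated; the argument is local in the sense that $f$ simply reads off the derivative of the current edge, and the strongly Markov bijection of Definition $2.5(3)$ does the rest. The one point requiring care is index bookkeeping: one has to confirm that the initial edge $(\ast,x_0)$ is contributed by $f$ applied to $i(g)$ itself (rather than a nontrivial shift) so that the $n$ Birkhoff terms cover exactly the $n$ edges of the path, and that the trivial extension of $d\varphi$ to edges into $0$ is harmless because such edges never appear among those $n$ shifts.
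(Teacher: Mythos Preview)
Your proposal is correct and is essentially identical to the paper's own proof: both define $f(x)=d\varphi((x_0,x_1))$ with the convention that $d\varphi$ vanishes on edges into the $0$ vertex, observe that $f$ is constant on $2$-cylinders (hence lies in $F_\theta$ for any $0<\theta<1$), and then verify the Birkhoff sum identity by reading off the successive edges of the path $i(g)$. Your index bookkeeping matches the paper's computation exactly.
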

\begin{proof}
 Let $\varphi$ be edge combable with derivative $d\varphi$. For $x = (x_n)_{n=0}^{\infty}\in \Sigma_A$, define
\[
  f(x) = \left\{
     \begin{array}{@{}l@{\thinspace}l}
        d\varphi((x_0,x_1)) & \ \  x_1 \neq 0 \\
        0 & \ \ x_1 =0,
     \end{array}
   \right.
\]
where $\rho$ denotes the labeling map defined in Definition $2.5$.
Since $f$ is constant on cylinders of length $2$, $f \in F_\theta(\Sigma_A)$ for any $0<\theta<1$. To see that Condition $(1)$ is satisfied, note that
\begin{align*}
f^{|g|}(i(g)) &= \sum_{k=0}^{n-1} f(\sigma^k(\ast,y_0,...,y_{n-1},\dot{0}))\\
&= \sum_{i=1}^n d\varphi(g_i)\\
&= \varphi(g).
\end{align*} 
\end{proof}
We have now seen examples of functions that satisfy Condition $(1)$. A large class of functions that satisfy Condition $(2)$ are quasimorphisms.

\begin{definition}
A function $\varphi : G \to \mathbb{R}$ is a quasimorphism if there exists a constant $A >0$ such that
$$|\varphi(gh) - \varphi(g) -\varphi(h)|\le A$$ for all $g,h \in G$.
\end{definition}
\noindent It is a simple exercise to show that quasimorphisms satisfy Condition $(2)$. We note that functions satisfying Definition $3.1$ are always Lipschitz in the left word metric. This can be seen by setting $h$ to be $e$, the identity  of $G$, in Definition $3.1$. When $h=e$, $\Delta_a\varphi(h)=\varphi(a)-\varphi(e)$ for any $a \in S$ and hence $|\varphi(ag)-\varphi(g)| \le C + |\varphi(a)| + |\varphi(e)|$ for all $g\in G$. It is easy to see that this implies $\varphi$ to be Lipschitz in the left word metric.\\ \indent

We now consider the class of functions satisfying both Condition $(1)$ and Condition $(2)$. In \cite{cf} Calegari and Fujiwara consider combable functions that are Lipschitz in the left and right words metrics on $G$. Furthermore, they prove that the class consisting of these functions is independent of the choice of symmetric $S$ and $\mathcal{G}$ associated to $G$. Hence our results apply to all functions considered by Calegari and Fujiwara in \cite{cf}. In particular our results apply to Brooks counting quasimorphisms \cite{brooks}, \cite{ef}.  In \cite{horshamthesis}, \cite{HS} Horsham and Sharp consider H\"older quasimorphism, which as discussed above, satisfy Condition $(1)$ and Condition $(2)$. Hence our results also apply to these functions. There are many other examples of functions satisfying Conditions $(1)$ and $(2)$. See, for example, \cite{bg}, \cite{ef} and \cite{gh}. As discussed in the introduction, the following examples are of particular interest to us.\\ \indent
Let $(X,d)$ be a complete $\text{CAT}(-1)$ geodesic metric space. A group $G$ is said to act convex cocompactly on $X$ if the quotient of the intersection of X and the convex hull (in $X$) of the limit set of $G$, is compact. Suppose $G$ acts properly discontinuously, convex cocompactly by isometries on $X$. Fix a finite generating set for $G$ and an origin $o$ (in the convex hull of the limit set of $G$) for $X$. 

\begin{lemma}
In the setting described above, the displacement function $g \mapsto d(o,go)$ satisfies Condition $(1)$ and Condition $(2)$.
\end{lemma}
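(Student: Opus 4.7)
My plan is to verify Condition $(2)$ by a direct application of the \u{S}varc--Milnor inequality and to reduce Condition $(1)$ to establishing that the displacement function is H\"older in the sense of Definition $3.1$, since by Pollicott and Sharp (Lemma $1$ of \cite{oc}) every such H\"older map automatically satisfies Condition $(1)$.

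For Condition $(2)$, let $C_1$ be the \u{S}varc--Milnor constant from the introduction. Since $G$ acts on $X$ by isometries,
$$|d(o,g_1 o)-d(o,g_2 o)|\le d(g_1 o,g_2 o)=d(o,g_1^{-1}g_2\, o)\le C_1|g_1^{-1}g_2|=C_1 d_L(g_1,g_2).$$
Writing $g_1=(g_1g_2^{-1})g_2$ and applying the triangle inequality together with the fact that $g_1g_2^{-1}$ is an isometry gives $d(o,g_1 o)\le d(o,g_1g_2^{-1}o)+d(o,g_2 o)$; interchanging the roles of $g_1$ and $g_2$ yields the analogous Lipschitz bound in $d_R$.

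The heart of the matter is the H\"older estimate required for Condition $(1)$: for every fixed $a\in G$ I must produce $C>0$ and $0<\theta<1$ such that
$$|\Delta_a\varphi(g)-\Delta_a\varphi(h)|\le C\theta^{(g,h)},\qquad g,h\in G,$$
where, writing $b=a^{-1}o$, one has $\Delta_a\varphi(g)=d(b,go)-d(o,go)$. This is a Busemann-type statement. The geometric input I would use is that in a CAT$(-1)$ space the function $x\mapsto d(b,x)-d(o,x)$ extends H\"older continuously to the Gromov boundary; concretely, if two geodesic segments $[o,y_1]$ and $[o,y_2]$ $\delta_X$-fellow travel for the first $t$ units, then
$$\bigl|\bigl(d(b,y_1)-d(o,y_1)\bigr)-\bigl(d(b,y_2)-d(o,y_2)\bigr)\bigr|\le C'e^{-\kappa t},$$
for constants $C',\kappa>0$ depending only on $b$.

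The final, and I expect most delicate, step is the translation of the combinatorial Gromov product $(g,h)$ in $G$ into a lower bound on the geometric fellow-traveling time of $[o,go]$ and $[o,ho]$ in $X$. For this I would use convex cocompactness: the orbit map $g\mapsto go$ is a quasi-isometric embedding with quasi-convex image, so that word geodesics in $G$ from $e$ to $g$ are sent to quasi-geodesics in $X$ from $o$ to $go$, which by the CAT$(-1)$ Morse lemma lie at uniformly bounded Hausdorff distance from the genuine geodesic $[o,go]$. Consequently there exist $\alpha,\beta>0$ such that $(g,h)\ge m$ forces $[o,go]$ and $[o,ho]$ to fellow travel for at least $\alpha m-\beta$ units. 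Taking $\theta=e^{-\kappa\alpha}$ and absorbing the constant $e^{\kappa\beta}$ into $C$ then completes the H\"older estimate and hence the verification of Condition $(1)$.
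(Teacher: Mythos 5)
Your proposal is correct and takes essentially the same route as the paper: the paper disposes of Condition $(1)$ by citing Proposition $3$ of \cite{18} (observing that only the $\mathrm{CAT}(-1)$ property is used) and calls Condition $(2)$ a simple exercise, and your argument --- verifying the H\"older condition of Definition $3.1$ via the exponential contraction estimate and the quasi-geodesic stability/fellow-travelling comparison supplied by convex cocompactness, then invoking Lemma $1$ of \cite{oc} --- is exactly the content of that citation, spelled out. Your \u{S}varc--Milnor computation for the left and right word metrics is the intended ``simple exercise'' for Condition $(2)$.
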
 
\begin{proof}
The fact that the displacement satisfies Condition $(1)$ is due to Pollicott and Sharp. This was proved in \cite{18} (see Proposition $3$) when $G$ acts on a negatively curved manifold $X$. However, the only property of $X$ required for the proof is the $\text{CAT}(-1)$ property and hence the proof applies to our case also. Showing that Condition $(2)$ is satisfied is a simple exercise.
\end{proof}
We end this section by observing that we can relax Condition (1) to the assumption that the equality $\varphi(g) = f(x) + f(\sigma(x)) + \ldots + f(\sigma^{|g| -1}(x))$ for $x=i(g)$ holds up to an error that is uniformly bounded in $G$. This is because, if our statistical results (other than the local limit theorems) hold for $\varphi:G \to \mathbb{R}$ then they hold for any function $\phi : G \to \mathbb{R}$ that is in uniformly bounded away from $\varphi$, i.e. $\sup_{g \in G} |\varphi(g) - \phi(g)| < \infty$. This means that our results can be applied to random matrix products as in Theorem $3.1$ of \cite{45}.

\section{Transfer Operators and Spectral Theory}
\subsection{Describing the spectrum}
Let $G,S$ have associated directed graph $\mathcal{G}$ described by transition matrix $A$. To deduce our main results, we analyse the following weighted sum
$$ \sum_{g \in W_n} e^{s\varphi(g)},$$
for $s\in \mathbb{C}$ with $|s|$ small, as $n\to\infty$. We want to express this sum in terms of transfer operators. To form a useful expression, we exploit the structure of $\mathcal{G}$ and in particular, use the fact that maximal components are disjoint. We therefore consider transfer operators of a specific form. The aim of this section is to define and study these operators. 

\begin{definition}
For $f \in F_\theta(\Sigma_A)$ define the transfer operator $L_{A,f}:F_\theta(\Sigma_A) \to F_\theta(\Sigma_A)$ by
$$L_{A,f}g(x) = \sum_{\substack{\sigma(y)=x \\ y \in \Sigma_A \backslash \{\dot{0}\}}}e^{f(y)}g(y).$$
\end{definition}
Note that these transfer operators vary slightly from those defined in Definition $2.2$, as we are excluding $\dot{0}$ as a possible preimage in the sum defining the operators. Pollicott and Sharp studied the spectral properties of these operators in \cite{oc}.\\ \indent
Let $B_{i}$ for $i=1,...,m$ denote the maximal components of $A$. 
\begin{definition}
For each $i=1,...,m$, define a matrix $C_i$ by,
\[
  C_i(u,v) = \left\{
     \begin{array}{@{}l@{\thinspace}l}
          0& \ \  \text{if $u$ or $v$ belong to a maximal component that is not $B_i$,}\\
          A(u,v) & \ \ \text{otherwise}.
     \end{array}
   \right.
\]

\end{definition}

\noindent We define $L_{B_i,f}$ and $L_{C_i,f}$ analogously to $L_{A,f}$. Note that the operators $L_{B_i,f}$ are the same as the operators $L_f$ acting on $F_\theta(\Sigma_{B_i})$ as given in Definition $2.2$.\\ \indent
We want to understand the spectral properties of the operators $L_{C_i,sf}$ for $|s|$ small. We analyse the operators in the case that $s=0$ and then use perturbation theory to obtain our desired result. Suppose that $\lambda$ is the exponential growth rate of $\#W_n$. It is well known that for each $i$, $L_{B_i,0}$ has the same simple maximal eigenvalues as $B_i$. These maximal eigenvalues have modulus $\lambda$ since the $B_i$ are maximal components. From our discussion in Section $2$, $\lambda$ is equal to $e^{h}$ where $h$ denotes the topological entropy of the system $(\Sigma_{B_i},\sigma)$. We want to show that $L_{C_i,0}$ has essentially the same spectrum as $L_{B_i,0}$.

\begin{lemma}
Suppose each $B_i$ has cyclic period $p_i$. Then, the operators $L_{C_i,0}$ are quasicompact, have spectra that consist of $p_i$ finite multiplicity maximal eigenvalues at $e^{2\pi i k /p_i} e^{h}$ for $k = 0,1,...,p_i-1$. The rest of the spectrum is contained in the disk $\{z: |z|<e^{h}-\delta\}$ for some $\delta>0$.
\end{lemma}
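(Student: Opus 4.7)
The plan is to leverage the block lower triangular structure of the matrix $C_i$. By construction, $C_i$ arises from $A$ by deleting the rows and columns of the maximal components other than $B_i$, so after a suitable reordering of the vertices, $C_i$ decomposes into finitely many irreducible pieces $E_1, \ldots, E_N$, exactly one of which equals $B_i$ and the rest of which are non-maximal irreducible components of $A$ together with the singletons $\{\ast\}$ and $\{0\}$. By Lemma $2.10$ (disjointness of maximal components) combined with the standard component decomposition recorded before Definition $2.9$, we can order these pieces so that every edge of $C_i$ runs from $E_j$ to $E_k$ with $k \le j$.

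The spectral analysis then proceeds via an associated finite invariant filtration. For each $j$, let $V_j \subset F_\theta(\Sigma_{C_i})$ be the closed subspace of H\"older functions supported on sequences $x$ with $x_0 \in E_1 \cup \cdots \cup E_j$. The triangularity of $C_i$ directly implies $L_{C_i, 0} V_j \subset V_j$, producing a finite invariant filtration $0 = V_0 \subset V_1 \subset \cdots \subset V_N = F_\theta(\Sigma_{C_i})$, since any preimage $y_0$ of an $x_0$ in a block of index strictly greater than $j$ lies in an even higher block and so contributes nothing to $g \in V_j$. Identifying the quotient $V_j / V_{j-1}$ with functions supported on $\{y_0 \in E_j\}$, the induced operator reduces, after discarding contributions that either lie in $V_{j-1}$ or are killed by the restriction to a representative, to a transfer operator built from the internal edges of $E_j$, which has the same spectral behaviour as $L_{E_j, 0}$.

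The spectrum of each diagonal piece is classical. When $E_j = B_i$, the operator $L_{B_i, 0}$ is the standard transfer operator on an irreducible subshift of period $p_i$, so by the theory recalled in Section $2.1$ its spectrum consists of $p_i$ simple maximal eigenvalues at $e^{2\pi i k/p_i} e^{h}$ for $k = 0, \ldots, p_i - 1$, with the remainder of the spectrum in a disk $\{|z| < e^{h} - \delta_i\}$. When $E_j$ is non-maximal, the matrix $E_j$ has spectral radius strictly less than $\lambda = e^{h}$ by the definition of maximality, which bounds the spectral radius of the associated transfer operator strictly below $e^{h}$. The singleton blocks $\{\ast\}$ and $\{0\}$ contribute only spectral values of modulus at most $1 < e^{h}$.

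The main obstacle is promoting this block-diagonal picture from the level of the matrix $C_i$ to a spectral statement about $L_{C_i, 0}$ on the infinite-dimensional H\"older space. The key point is that since the filtration is finite and the off-diagonal part of $L_{C_i, 0}$ relative to it is strictly triangular, and hence nilpotent modulo the filtration, the spectrum of $L_{C_i, 0}$ coincides with the union of the spectra of the induced operators on the successive quotients, and quasicompactness is inherited from each diagonal piece being quasicompact. This yields the stated conclusion: $L_{C_i, 0}$ has exactly $p_i$ finite-multiplicity maximal eigenvalues at $e^{2\pi i k/p_i} e^{h}$ together with the rest of the spectrum inside $\{|z| < e^{h} - \delta\}$, where $\delta$ is the minimum of $\delta_i$ and the gap between $e^{h}$ and the spectral radii of the non-maximal pieces.
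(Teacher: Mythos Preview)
Your approach is the same as the paper's: exploit the block lower-triangular structure of $C_i$ and compare with the block-diagonal matrix $P$ obtained by keeping only the irreducible diagonal pieces. The paper, however, outsources the spectral comparison to Lemma~2 of \cite{oc}, which asserts that $L_{C_i,0}$ and $L_{P,0}$ have the same isolated eigenvalues, and then reads off the spectrum of $L_{P,0}$ as a direct sum. You instead try to prove that comparison directly via an invariant filtration.

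Two points in your direct argument need more care. First, the quotient $V_j/V_{j-1}$ is not $F_\theta(\Sigma_{E_j})$: its natural model is the space of H\"older functions on $\{x\in\Sigma_{C_i}:x_0\in E_j\}$, which is strictly larger than $\Sigma_{E_j}$ because sequences that begin in $E_j$ may exit to lower-index blocks. The induced operator does restrict the preimage sum to internal edges of $E_j$, but it acts on this larger function space, so the assertion that it has ``the same spectral behaviour as $L_{E_j,0}$'' requires justification. Second, the claim that for a bounded operator preserving a finite filtration the spectrum equals the union of the spectra on the successive quotients is false in general (take the bilateral shift on $\ell^2(\mathbb{Z})$ with invariant subspace $\ell^2(\mathbb{Z}_{\ge 0})$: the full spectrum is the unit circle, while each quotient operator has spectrum the closed disk). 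Only the containment $\sigma(L_{C_i,0})\subset\bigcup_j\sigma(\overline L_j)$ is automatic from the triangular resolvent formula. That containment already gives the spectral gap, but you still owe a proof that the $p_i$ points $e^{2\pi ik/p_i}e^h$ actually lie in $\sigma(L_{C_i,0})$ as finite-multiplicity eigenvalues, which is exactly what the cited lemma supplies and what your filtration sketch does not establish.
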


\begin{proof}
The proof is basically an application of Lemma $2$ from \cite{oc}. Quasicompactness of the operators follows immediately. By relabeling the columns of $C_i$, we can rewrite each $C_i$ in the form

$$C_i = \begin{pmatrix} 
C_{1,1} & 0 & \dots & 0  \\
C_{2,1} & C_{2,2} & \dots & 0\\
\vdots & \vdots & \ddots & \vdots\\
C_{m,1} & C_{m,2} & \dots & C_{m,m}
\end{pmatrix} $$
where the $C_{j,j}$ correspond to irreducible components of $\mathcal{G}$. By construction all maximal components have corresponding matrix $0$ except for the matrix corresponding to $B_i$.
Let
$$ P=
\begin{pmatrix} 
C_{1,1} &0 &0   &\dots & 0  \\
0 &C_{2,2} &0 &\dots & 0\\
0 &0 &C_{3,3} &\dots &0\\
\vdots & \vdots &\vdots & \ddots & \vdots\\
0 & 0  &0 &\dots & C_{m,m}
\end{pmatrix} $$
Lemma $2$ in \cite{oc} states that the operators $L_{C_i,0}$ and $L_{P,0}$ have the same isolated eigenvalues. It is easy to see that the spectrum of $L_{P,0}$ consists of $p_i$ finite multiplicity eigenvalues, $e^{2\pi i k/p_i} e^h$ for $k = 0,...,p_i-1$ and the rest of the spectrum is contained in $\{z:|z| <e^{h}-\delta\}$ for some $\delta>0$. Quasicompactness of the $L_{C_i,0}$ now implies the result.
\end{proof}

One can check that the finite multiplicity eigenvalues from the above lemma are in fact simple. Let $B_i^\ast$ denote the matrices that describes the subgraph of $\mathcal{G}$ that contains the vertices in $B_i$, the $0$ vertex and all edges between these vertices that are allowed by $A$. There are a few steps in showing that the eigenvalues in the above lemma are simple. We show that each of the following statements can, in some sense, be deduced from the previous one.

\begin{enumerate}
\item If $B_i$ is aperiodic then the maximal eigenvalue for $L_{B_i^\ast,0}$ is simple.
\item If $B_i$ is irreducible then the maximal eigenvalues for $L_{B_i^\ast,0}$ are simple.
\item If $B_i$ is irreducible then the maximal eigenvalues for $L_{C_i,0}$ are simple.
\end{enumerate}
Statement $(1)$ in the above is well known \cite{GR}, \cite{oc}. We will show that $(2)$ and then $(3)$ also hold.

\begin{proof} [Proof of (2)]

 \noindent Suppose that $B_i$ is irreducible. Recall that there exists $p_i$, the period of $B_i$, such that
$\Sigma_{B_i}$ has $p_i$-cyclic decomposition
$$\Sigma_{B_i} = \bigsqcup_{k=0}^{p_i-1} \Sigma_{B_i^k}.$$ 
$L_{B_i,0}$ has spectrum containing maximal eigenvalues at $e^{2\pi ik/p_i} e^h$ for $k = 0,1,...,p_i-1$. The rest of the spectrum is contained in a disk of radius strictly smaller than $e^h$.\\ \indent
The $p_i$th iterates of the transfer operators $L_{B_i,0}^{p_i}$ act as the direct sum of operators $L_{B_i^k,0}^{p_i}$ for $k = 0,...,p_i-1$ each acting on $F_\theta(\Sigma_{B_i^k})$ respectively. The analogous statement is true for the $L_{B_i^\ast,0}^{p_i}$. The following notation expresses this,
$$L_{B_i^\ast,0}^{p_i} = \left( L^{p_i}_{{B_{i,0}^\ast},0},L^{p_i}_{{B_{i,1}^\ast},0},...,L^{p_i}_{{B_{i,p_i-1}^\ast},0}\right),$$
$$L_{B_i,0}^{p_i} = \left( L_{B_i^0,0}^{p_i},L_{B_i^1,0}^{p_i},...,L_{B_i^{p-1},0}^{p_i}\right).$$
Here, each $B_{i,k}^\ast$ corresponds to $B_i^k$ with the $0$ vertex (and all edges to the $0$ vertex) added back in. We will continue to use the above notation through out the rest of this work.\\ \indent
Each $(\Sigma_{B^\ast_{i,k}},\sigma^{p_i})$ is a subshift of finite type of the same form as the aperiodic case from $(1)$. We know that, for each $k$, $L_{B_i^k,0}^{p_i}$ has simple maximal eigenvalue $e^{p_ih}$ and hence $L_{B^\ast_{i,k},0}^{p_i}$ does also. From the definition of $L_{B_i^\ast,0}$ it is easy to see that the spectrum of $L_{B_i^\ast,0}$ consists of simple maximal eigenvalues at $e^{2\pi ik/p_i} e^h$ for $k = 0,1,...,p_i-1$ and the rest of the spectrum is contained in the disk $\{z: |z|<e^{h}-\delta\}$ for some $\delta>0$. This concludes the proof.
\end{proof}

Using $(2)$ we can now prove that $(3)$ holds.

\begin{proof} [Proof of  (3) ]

\noindent Suppose that $B_i$ is irreducible and that $g \in F_\theta(\Sigma_{B_i^\ast})$ is the eigenfunction for the eigenvalue $e^{2\pi i k/p_i} e^h$ for $L_{B_i^\ast,0}$. Let $h$ be an eigenfunction corresponding to the eigenvalue $e^{2\pi i k/p_i} e^h$ for $L_{C_i,0}$. Suppose there exists $x \in \Sigma_{C_i}$ such that $x_0$ does not belong to $B_i$ but there exists a path from $x_0$ into $B_i$. Then,
\begin{align*}
e^{p_i hn}|h(x)| &=|L^{p_in}_{C_i,0} h(x)|\\
&\le \sum_{\substack{\sigma^{p_in}(y)=x \\ y \in \Sigma_{C_i} \backslash \{\dot{0}\}}}|h(y)|\\
&= \sum_{\substack{\sigma^{p_in}(y)=x \\ y \in \Sigma_{C_i} \backslash \{\dot{0}\} : \text{$y_0,...,y_{p_in-1}$ are not in $B_i$}}}\hspace{-1cm}|h(y)|.
\end{align*}
However, the growth of the number of length $n$ paths in $\mathcal{G},$ starting at $\ast$, that do not enter a maximal component, is $o(e^{hn})$. This implies that
$$e^{p_ihn} |h(x)| = o(e^{p_i h n}),$$
which forces $h(x)=0$. Hence $h$ is zero on 
$$S:= \{x \in \Sigma_{C_i} : \text{$x_0$ is not in $B_i$ and there exists a path from $x_0$ into $B_i$ in $\mathcal{G}$}\}.$$ 
We deduce that $h|_{\Sigma_{B_i^\ast}}$ is an eigenfunction for $L_{B_i^\ast,0}.$ Now, suppose $L_{C_i,0}$ has another eigenfunction for the eigenvalue $e^{2\pi i k/p_i}e^h$. Then, by taking a linear combination of $h$ and this new eigenfunction, we can assume that there exists a non-zero eigenfunction for $L_{B_i^\ast,0}$ that is zero on the set
$$\{x \in \Sigma_{C_i}: \text{there exists a path from $x_0$ into $B_i$ in $\mathcal{G}$}\}.$$
However, by taking $x$ such that $h(x) \neq 0$ and running the same growth argument as before, we see that any such eigenfunction can not exists. Hence $L_{C_i,0}$ has algebraically simple eigenvalues at $e^{2 \pi i k/p_i} e^h$ for $k = 0,...,p_i-1$.\\ \indent
To see geometric simplicity a similar argument can be applied. Suppose $L_{C_i,0}$ has Jordan chain
\begin{align*}
&g_{n-1}\\
&g_{n-2} = \left(L_{C_i,0} - e^{2\pi i k/p_i} e^{h}\right) g_{n-1}\\
&\vdots\\
&g = \left(L_{C_i,0} - e^{2\pi i k/p_i}e^{h}\right) g_1,
\end{align*}
for $n \ge 3$.
Then we see that there exists bounded linear operators $P_j(n)$ such that for each $j$ and $n \in \mathbb{Z}_{\ge 0}$,
$$L_{C_i,0}^{p_in}g_j = e^{np_i h} g_j + P_j(n)g_{j-1}.$$
By the same growth argument as before, if $g_{j-1}$ is $0$ on the set $S$, then $g_{j}$ is also $0$ on $S$. Hence, by induction, all the $g_j$ are $0$ on $S$.  This implies that $g,g_1,...,g_{n-1}$ restricts to a Jordan chain for $L_{B_i^\ast,0}$ which in turn implies $g|_{B_i^\ast} = 0$, a contradiction. This concludes the proof.
\end{proof}
In summary we have shown.
\begin{proposition}
Suppose each $B_i$ has cyclic period $p_i$. Then, there exists $\delta >0$ such that the operators $L_{C_i,0}$ have spectra that consist of $p_i$ simple maximal eigenvalues at $e^{2\pi i k /p_i} e^{h}$ for $k = 0,1,...,p_i-1$ and the rest of the spectrum is contained in $\{z: |z|<e^{h}-\delta\}$.
\end{proposition}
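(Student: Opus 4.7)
The plan is to recognise Proposition 4.4 as essentially the combination of Lemma 4.3 with the three simplicity claims (1)--(3) established immediately beforehand; no further analytic input should be required. Lemma 4.3 already supplies quasicompactness of $L_{C_i,0}$, the existence of $p_i$ maximal eigenvalues at $e^{2\pi i k/p_i}e^h$ for $k=0,\ldots,p_i-1$, and a spectral gap down to radius $e^h-\delta$. What remains is to upgrade ``finite multiplicity'' to ``simple'' for each of these maximal eigenvalues.

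The logic I would follow is the chain (1) $\Rightarrow$ (2) $\Rightarrow$ (3). Claim (1), that $L_{B_i^\ast,0}$ has a simple maximal eigenvalue when $B_i$ is aperiodic, is classical. For (2), I would exploit the cyclic decomposition $\Sigma_{B_i}=\bigsqcup_{k=0}^{p_i-1}\Sigma_{B_i^k}$ and the resulting block-diagonal expression for $L_{B_i^\ast,0}^{p_i}$, so that applying (1) to each aperiodic block $L_{B_{i,k}^\ast,0}^{p_i}$ gives simplicity of all $p_i$ maximal eigenvalues of $L_{B_i^\ast,0}$. Step (3) then transfers simplicity from $L_{B_i^\ast,0}$ to $L_{C_i,0}$, and combining this with Lemma 4.3 yields the proposition.

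The point requiring the most care is step (3), as it is the only one that genuinely uses the specific structure of $C_i$. The key input is the growth estimate that the number of length-$n$ paths in $\mathcal{G}$ starting at $\ast$ and avoiding every maximal component is $o(e^{hn})$. Using this, any eigenfunction for a maximal eigenvalue of $L_{C_i,0}$ must vanish on the set $S$ of sequences whose first coordinate lies outside $B_i$ yet admits a path into $B_i$; its restriction to $\Sigma_{B_i^\ast}$ is then an eigenfunction of $L_{B_i^\ast,0}$, and (2) forbids two linearly independent such restrictions, giving algebraic simplicity. A parallel induction along a hypothetical Jordan chain, using the same growth bound at each step, rules out generalised eigenfunctions and yields geometric simplicity as well. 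This transfer argument is the main obstacle, but it has already been carried out in the discussion preceding the proposition, so the proof reduces to assembly.
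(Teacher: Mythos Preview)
Your proposal is correct and mirrors the paper's own treatment exactly: Proposition 4.4 is stated there as a summary (``In summary we have shown''), and its content is precisely Lemma 4.3 together with the simplicity upgrade via the chain (1) $\Rightarrow$ (2) $\Rightarrow$ (3), where (3) uses the $o(e^{hn})$ growth bound to force vanishing on $S$, restriction to $\Sigma_{B_i^\ast}$, and the Jordan-chain induction. There is nothing to add.
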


We now study the perturbed operators $L_{C_i,sf}$. We require the following result from perturbation theory.

\begin{proposition} \cite[Theorem 6.17]{k}
Let $B(V)$ denote the Banach algebra of bounded linear operators on a Banach space. Suppose $L_0$ has a simple isolated eigenvalue $\rho(L_0)$ with corresponding eigenvector $v(L_0)$. Then, for any $\epsilon >0$, there is $\delta>0$ such that if $\|L - L_0\| < \delta$ then $L$ has a simple isolated eigenvalue $\rho(L)$ with corresponding eigenvector $v(L)$. Moreover

\begin{itemize}
\item the maps $L \mapsto \rho(L)$ and $L \mapsto v(L)$ are analytic for $\|L-L_0\| < \delta$,
\item if $\|L-L_0\| < \delta$, then $|\rho(L) - \rho(L_0)| < \epsilon$ and the part of the spectrum of $L$ that does not include $\rho(L)$ is contained in $\{z \in \mathbb{C} : |z-\rho(L_0)| > \epsilon\}$.
\end{itemize}
\end{proposition}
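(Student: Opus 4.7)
The plan is to prove this via the Riesz spectral projection construction from the holomorphic functional calculus. Since $\rho(L_0)$ is an isolated simple eigenvalue of $L_0$, there exists a positively oriented circle $\gamma$ centred at $\rho(L_0)$, of radius strictly less than $\epsilon$, such that $\gamma$ and its interior meet $\sigma(L_0)$ only at $\rho(L_0)$. Define the spectral projection
\[
P_0 \;=\; \frac{1}{2\pi i}\oint_\gamma (z-L_0)^{-1}\,dz,
\]
which has rank one because $\rho(L_0)$ is simple.

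Next, I would use the Neumann series to transfer this picture to nearby operators. The resolvent $R_0(z)=(z-L_0)^{-1}$ is continuous on the compact set $\gamma$, so $M:=\sup_{z\in\gamma}\|R_0(z)\|$ is finite. For $L$ with $\|L-L_0\|<\delta_0:=1/(2M)$, the identity
\[
(z-L)^{-1}=(z-L_0)^{-1}\sum_{k=0}^{\infty}\bigl[(L-L_0)(z-L_0)^{-1}\bigr]^{k}
\]
converges uniformly in $z\in\gamma$, so $(z-L)^{-1}$ exists on $\gamma$ and depends analytically on $L$ in operator norm. Define
\[
P(L)\;=\;\frac{1}{2\pi i}\oint_\gamma (z-L)^{-1}\,dz;
\]
this is an analytic function of $L$ near $L_0$, is a projection, and its range is the spectral subspace of $L$ associated with the portion of $\sigma(L)$ inside $\gamma$.

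The main obstacle is to show that $P(L)$ has rank exactly one for $L$ sufficiently close to $L_0$, so that $L$ has a unique and simple eigenvalue inside $\gamma$. For this I invoke the classical fact that whenever two projections $P,Q$ on a Banach space satisfy $\|P-Q\|<1$, the operator $U:=PQ+(I-P)(I-Q)$ is invertible and intertwines them, $UQ=PU$, hence $\mathrm{rank}\,P=\mathrm{rank}\,Q$. Continuity of $L\mapsto P(L)$ gives $\delta_1>0$ with $\|P(L)-P_0\|<1$ for $\|L-L_0\|<\delta_1$, and therefore $\mathrm{rank}\,P(L)=1$. Thus $L$ has a unique simple eigenvalue $\rho(L)$ inside $\gamma$, and the part of $\sigma(L)$ outside $\gamma$ stays in $\{|z-\rho(L_0)|>\epsilon\}$; the latter uses uniform boundedness of $\|(z-L)^{-1}\|$ on the complement of a neighbourhood of $\sigma(L_0)$, which is again a Neumann-series argument.

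Finally, for the analytic dependence asserted in the first bullet, pick $v_0\in\mathrm{Range}(P_0)$ of unit norm and a bounded linear functional $\ell$ with $\ell(v_0)=1$. Set $v(L):=P(L)v_0$. This is analytic in $L$ and, being non-zero and in the one-dimensional range of $P(L)$, is an eigenvector of $L$ with eigenvalue
\[
\rho(L)\;=\;\frac{\ell(Lv(L))}{\ell(v(L))},
\]
a ratio of analytic, non-vanishing scalar-valued functions of $L$, whence $\rho$ and $v$ are analytic on the ball $\|L-L_0\|<\delta:=\min(\delta_0,\delta_1)$. Shrinking $\delta$ further if necessary ensures $|\rho(L)-\rho(L_0)|<\epsilon$ by continuity of $\rho$, completing the proof.
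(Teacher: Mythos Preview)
The paper does not supply a proof of this proposition at all: it is quoted verbatim as a classical perturbation-theory result, with the citation \cite[Theorem 6.17]{k} to Kato. So there is no ``paper's own proof'' to compare against; your argument is essentially the standard one that Kato gives, based on the Riesz projection
\[
P(L)=\frac{1}{2\pi i}\oint_\gamma (z-L)^{-1}\,dz
\]
together with the Neumann-series perturbation of the resolvent and the rank-stability lemma for nearby projections.

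Your write-up is correct in substance. One small point worth tidying: you take $\gamma$ of radius strictly less than $\epsilon$, and then assert separately that the part of $\sigma(L)$ outside $\gamma$ lies in $\{|z-\rho(L_0)|>\epsilon\}$. It is cleaner (and avoids the extra Neumann-series step on the annulus) simply to take $\gamma$ to be the circle of radius $\epsilon$ itself, which is permissible once one notes---as is implicit in the statement---that $\epsilon$ is small enough for this circle to separate $\rho(L_0)$ from the rest of $\sigma(L_0)$. Then the rank-one conclusion for $P(L)$ immediately gives both bullets at once: the unique eigenvalue inside $\gamma$ satisfies $|\rho(L)-\rho(L_0)|<\epsilon$, and everything else in $\sigma(L)$ lies outside $\gamma$.
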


By Proposition $4.4$, upper semi-continuity of the spectrum and Proposition $4.5$, for all sufficiently small (complex) $s$, $L_{C_i,sf}$ has $p_i$ simple maximal eigenvalues and exhibits a spectral gap to the rest of the spectrum. This gap is uniform for $s$ in a small neighbourhood of the origin. Our aim is to show that, as we perturb $L_{C_i,0}$, these simple maximal eigenvalues vary in the same way. Specifically, we want to show that for small $s$, $L_{C_i,sf}$ has $p_i$ simple maximal eigenvalues of the form $\lambda_s e^{2\pi i k/p_i}$ for $k=0,...,p_i-1$, where $s \mapsto \lambda_s$ is analytic.\\ \indent
By Lemma $2$ in \cite{oc}, for sufficiently small $s$, the simple maximal eigenvalues of $L_{C_i,sf}$ are those of $L_{B_i,sf}$. Hence it suffices to study small perturbations of $L_{B_i,0}$. Suppose $\Sigma_{B_i}$ has cyclic decomposition $\sqcup_{k=0}^{p_i-1} \Sigma_{B_i^k}$ as before.\\ \indent
We consider the $p_i\ $th iterate of $L_{B_i,0}$,
$$L_{B_i,sf}^{p_i} = \left( L_{B_i^0,sf}^{p_i},L_{B_i^1,sf}^{p_i},...,L_{B_i^{p_i-1},sf}^{p_i}\right).$$

\noindent The systems $(\Sigma_{B_i^k}, \sigma^{p_i})$ are aperiodic subshifts and $L_{B_i^k,sf}^{p_i}$ acts as $L_{B_i^k,sf^{p_i}}$ on this system. Define $\tilde{f}^k: \Sigma_{B_i^k} \to \mathbb{R}$ by $\tilde{f}^k(x) = f^{p_i}(x)$. We can choose $\epsilon>0$ such that for $|s|<\epsilon$ each of the $L_{B_i^k,sf^{p_i}}$ have a simple maximal eigenvalue $e^{P(s\tilde{f}^k)}$ and exhibit a spectral gap to the rest of the spectrum. Fix $|s|<\epsilon$. We deduce that the spectrum of $ L_{B_i,sf}^{p_i}$ consists of a finite multiplicity maximal eigenvalue $\lambda :=e^{P(s\tilde{f}^l)}$ for some $l\in \{0,...,p_i-1\}$ and the rest of the spectrum is contained in a disk, centered at the origin, of radius strictly less than $|\lambda|$. It is easy to see that if $x$ is in the spectrum of $L_{B_i,sf}^{p_i}$, then one of the $p_i$th roots of $x$ must be in the spectrum of $L_{B_i,sf}$. Furthermore, each element in the spectrum of $L_{B_i,sf}$ is the $p_i$th root of an element in the spectrum of $L_{B_i,sf}^{p_i}$. By quasicompactness $L_{B_i, sf}$ has an eigenvalue that is a $p_i$th root of $\lambda$. Suppose $g_0$ is the associated eigenfunction. Note that $g_0$ restricted to $\Sigma_{B_i^k}$ is an eigenfunction for each $k$ satisfying $L_{B_i^k,sf}^{p_i}g_0|_{B_i^k} = \lambda g_0|_{B_i^k}$. It follows from the definition of the transfer operator, that for each $k$, $g_0|_{B_i^k}$ is not identically zero (otherwise $g_0$ would be identically zero). We deduce that for all $s$ sufficiently small, the eigenvalues $e^{P( s\tilde{f}^k)}$ agree for all $k$. It follows that for all $s$ sufficiently small, the spectrum of $L_{B_i,sf}$ consists of $p_i$ simple maximal eigenvalues of the form $e^{2\pi i k/p_i} e^{P(s\tilde{f}^0)/p_i}$ for $k = 0,1,...,p_i-1$ and the rest of the spectrum is contained in a disk of radius strictly less than the modulus of $e^{P(s\tilde{f}^0)/p_i} - \delta$, for some $\delta>0$. To simplify notation we write $P_i(sf)$ to denote $P(s\tilde{f}^0)/p_i$ . To summarise, we have shown the following.

\begin{proposition}
There exists $\epsilon, \delta >0$ such that for all $|s|<\epsilon$, $L_{C_i,sf}$ has $p_i$ simple maximal eigenvalues $e^{2 \pi ik/p_i}e^{P_i(sf)}$ for $k = 0,...,p_i-1$, these are contained in the $\delta$ neighbourhood of $\{e^{2 \pi ik/p_i}e^h : k = 0,...,p_i-1\}$ and the rest of the spectrum is contained in the disk $|z|< e^{h}-2\delta$.
\end{proposition}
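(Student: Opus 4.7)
The plan is to bootstrap from the unperturbed case (Proposition 4.4) to small $|s|$ using perturbation theory, then reduce from $C_i$ to $B_i$, and finally exploit the cyclic decomposition of $\Sigma_{B_i}$. The starting point is that by Proposition 4.4, the operator $L_{C_i,0}$ has $p_i$ simple maximal eigenvalues $e^{2\pi i k/p_i} e^h$ with the rest of the spectrum strictly inside the disk of radius $e^h$. Since $s \mapsto L_{C_i,sf}$ is norm-continuous (indeed analytic) in $s \in \mathbb{C}$, I would apply Proposition 4.5 to each of these $p_i$ simple isolated eigenvalues individually, together with upper semi-continuity of the spectrum for the remaining part, to obtain $\epsilon_0,\delta>0$ such that for $|s|<\epsilon_0$ the operator $L_{C_i,sf}$ has exactly $p_i$ simple eigenvalues, each lying in a $\delta$-neighbourhood of the corresponding $e^{2\pi i k/p_i} e^h$, while the remainder of its spectrum is contained in $\{|z| < e^h - 2\delta\}$.

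The next step is to identify these simple maximal eigenvalues explicitly as $e^{2\pi i k/p_i} e^{P_i(sf)}$. For this I would invoke Lemma 2 of \cite{oc} to conclude that, for small $|s|$, the isolated maximal spectrum of $L_{C_i,sf}$ coincides with that of $L_{B_i,sf}$. So it suffices to analyse $L_{B_i,sf}$. Using the cyclic decomposition $\Sigma_{B_i} = \bigsqcup_{k=0}^{p_i-1} \Sigma_{B_i^k}$, the $p_i$th iterate splits as the direct sum of the operators $L_{B_i^k, sf^{p_i}}$, each acting on the aperiodic subshift $(\Sigma_{B_i^k}, \sigma^{p_i})$. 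Defining $\tilde f^k = f^{p_i}|_{\Sigma_{B_i^k}}$, standard thermodynamic formalism in the aperiodic case gives, for $|s|$ small, a simple maximal eigenvalue $e^{P(s\tilde f^k)}$ for each $k$ together with a uniform spectral gap.

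The remaining—and perhaps subtlest—point is that the $p_i$ pressures $P(s\tilde f^k)$ all agree. This will follow from a coupling argument: pick any eigenfunction $g_0$ of $L_{B_i,sf}$ corresponding to an eigenvalue whose $p_i$th power realises the maximum modulus in the spectrum of $L_{B_i,sf}^{p_i}$. Then $g_0|_{\Sigma_{B_i^k}}$ is an eigenfunction of $L_{B_i^k,sf^{p_i}}$ with this eigenvalue; because the shift cycles the pieces $\Sigma_{B_i^k}$ and the transfer operator is positivity-type, $g_0|_{\Sigma_{B_i^k}}$ cannot vanish identically on any piece (otherwise iteration would force $g_0 \equiv 0$). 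Hence the eigenvalues $e^{P(s\tilde f^k)}$ coincide across $k$, giving a common value $e^{p_i P_i(sf)}$; taking all $p_i$th roots yields the $p_i$ simple eigenvalues $e^{2\pi i k/p_i} e^{P_i(sf)}$ claimed. Combining this with the spectral gap and localisation from the first step, and shrinking $\epsilon_0$ to a final $\epsilon$ if necessary, completes the proof.

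The main obstacle I expect is the last step, namely ruling out the possibility that different cyclic pieces support genuinely different maximal eigenvalues after perturbation. Without this, one only gets one simple maximal eigenvalue for $L_{B_i,sf}^{p_i}$ of unknown multiplicity among the $p_i$th roots, rather than the clean rotated-family structure $e^{2\pi i k/p_i} e^{P_i(sf)}$ that the proposition asserts; the non-vanishing argument for $g_0|_{\Sigma_{B_i^k}}$ is what pins this down.
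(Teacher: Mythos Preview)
Your proposal is correct and follows essentially the same route as the paper: start from Proposition~4.4, apply Proposition~4.5 and upper semi-continuity of the spectrum to get $p_i$ simple eigenvalues with a uniform gap, reduce from $C_i$ to $B_i$ via Lemma~2 of \cite{oc}, decompose $L_{B_i,sf}^{p_i}$ along the cyclic pieces, and then use the non-vanishing of the restrictions $g_0|_{\Sigma_{B_i^k}}$ of a maximal eigenfunction to force the pressures $P(s\tilde f^k)$ to agree across $k$. The ``subtlest point'' you flag is exactly the one the paper isolates and handles by the same eigenfunction-restriction argument.
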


Let $\epsilon$ be as in the above proposition. We use $B(F_\theta(\Sigma_A))$ to denote the Banach algebra of bounded linear operators over $\Sigma_A$. Results from analytic perturbation theory (see Theorem 6.17 in \cite{k}) imply that there exist analytic projection valued functions $Q_{i,k}: \{s \in \mathbb{C}: |s| < \epsilon\} \to B(F_\theta(\Sigma_A))$ such that $Q_{i,k}(s)$ projects a function in $F_\theta(\Sigma_{C_i})$ to the one-dimensional eigenspace associated to the simple maximal eigenvalue $e^{2\pi i k/p_i} e^{P_i(sf)}$ of the operator $L_{C_i,sf}$.

\subsection{Comparing the Derivatives of Pressure}
We now want to show that, as we perturb the operators $L_{C_i,0}$, the simple maximal eigenvalues from Proposition $4.6$ vary in a similar way. Specifically, we show that the quantities 
$$\Lambda_i := \frac{dP_i(sf)}{ds}  \Big|_{s=0} \text{ and } \sigma^2_i:= \frac{d^2P_i(sf)}{ds^2} \Big|_{s=0}$$
are independent of the maximal component $B_i$.\\ \indent

To show that these quantities agree across components, we appeal to the work of Calegari and Fujiwara. We will use the argument presented in \cite{calnotes} and \cite{cf}. To apply this argument, we need the following technical lemma.
\begin{lemma}
Suppose $r = (r_k)_{k=0}^\infty \in \Sigma_A$ with $r_0 = \ast$. Write $\tilde{r}_k \in G$ to denote the group element corresponding to the path $(\ast,r_1,...,r_{k-1},\dot{0})$ in $\mathcal{G}$ under the bijection from Definition $2.5$. Then,
$$f^n(\sigma^k(r)) = \varphi(\tilde{r}_{n+k}) - \varphi(\tilde{r}_k) + O(1),$$
where the above error term constant is independent of $r,k$ and $n$.
\end{lemma}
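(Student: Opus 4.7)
The plan is to apply Condition (1) to both $\tilde r_k$ and $\tilde r_{n+k}$, converting the right-hand side of the claim into a Birkhoff sum of $f$, and then to absorb the discrepancy between that sum and $f^n(\sigma^k r)$ into a bounded error using the $\theta$-Lipschitz regularity of $f \in F_\theta(\Sigma_A)$. The underlying principle is that functions in $F_\theta$ are insensitive to tail perturbations at a geometric rate, so the fact that the embedding $i(\cdot)$ truncates with a $\dot 0$-tail while the genuine shift orbit of $r$ continues with the true coordinates will only contribute a uniformly bounded error.

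Specifically, set $x = i(\tilde r_{n+k})$ and $y = i(\tilde r_k)$. Both embeddings begin with a common initial segment read off from $r$ and terminate with a $\dot 0$-tail. By Condition (1),
\[
\varphi(\tilde r_{n+k}) = f^{|\tilde r_{n+k}|}(x), \qquad \varphi(\tilde r_k) = f^{|\tilde r_k|}(y).
\]
Using the cocycle identity $f^{a+b}(z) = f^a(z) + f^b(\sigma^a z)$ at the natural splitting point, one rewrites
\[
\varphi(\tilde r_{n+k}) - \varphi(\tilde r_k) = \bigl( f^{|\tilde r_k|}(x) - f^{|\tilde r_k|}(y) \bigr) + f^n \bigl( \sigma^{|\tilde r_k|} x \bigr).
\]
The lemma therefore reduces to two uniform $O(1)$ estimates: first, $f^{|\tilde r_k|}(x) - f^{|\tilde r_k|}(y) = O(1)$, and second, $f^n(\sigma^{|\tilde r_k|} x) - f^n(\sigma^k r) = O(1)$, with implied constants depending only on $|f|_\theta$ and $\theta$.

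Both estimates follow the same pattern. The pair $(x,y)$ agrees in its first several coordinates and diverges only at the moment the shorter embedding $y$ switches to its $\dot 0$-tail, so for each iterate $\sigma^j$ appearing in the Birkhoff sum the two images agree in at least $k-j$ coordinates; by the $\theta$-Lipschitz property of $f$ this contributes at most $|f|_\theta \theta^{k-j}$ to the termwise difference, and summing in $j$ yields a geometric series bounded by $|f|_\theta \theta/(1-\theta)$. The pair $(\sigma^{|\tilde r_k|} x,\, \sigma^k r)$ is handled symmetrically: the discrepancy this time lies in the $\dot 0$-tail of $x$ versus the genuine continuation of $r$ past coordinate $n+k$, and an identical geometric estimate applies.

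The main obstacle is the careful index bookkeeping — one must confirm that the splitting point matches $|\tilde r_k|$ and that the resulting shift of $x$ really does agree coordinatewise with $\sigma^k r$ up to the eventual tail divergence. Nothing deeper from the group or Markov side is needed: the lemma is a quantitative manifestation of the tail-insensitivity of $F_\theta$, and Condition (1) is precisely the mechanism that transfers this insensitivity from the symbolic system $\Sigma_A$ to the word metric on $G$.
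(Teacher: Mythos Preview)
Your proposal is correct and is essentially identical to the paper's proof: your $x,y$ are the paper's $s_2,s_1$, your $\sigma^{|\tilde r_k|}x$ is the paper's $s_3$, and the two $O(1)$ estimates you isolate are exactly the two H\"older bounds $|f^n(\sigma^k r)-f^n(s_3)|\le C$ and $|f^k(s_2)-f^k(s_1)|\le C'$ that the paper records, glued together by the same cocycle identity. The only difference is cosmetic --- you run the argument from the $\varphi$-side while the paper starts from $f^n(\sigma^k r)$ --- and your explicit geometric-series bound $|f|_\theta\,\theta/(1-\theta)$ makes the uniformity in $r,k,n$ slightly more transparent than the paper's bare appeal to the H\"older property.
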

\begin{proof}
Given $n,k \in \mathbb{Z}_{\ge 0}$ and $r \in \Sigma_A$, define $s_1,s_2, s_3 \in \Sigma_A$ by
$$ s_1 = (\ast,r_1,...,r_{k-1},\dot{0}), \hspace{.4cm} s_2 = (\ast,r_1,...,r_{k+n-1},\dot{0}), \hspace{.4cm} s_3 = (r_k,r_{k+1},...,r_{k+n-1},\dot{0}).$$
Then, by the H\" older property of $f$, there exists $C>0$ independent of $n,k$ and $r$, such that
$$|f^n(\sigma^k(r))-f^n(s_3)|\le C.$$
Then, note that $f^n(s_3) + f^k(s_2) = f^{n+k}(s_2)$ and also that there exists $C'>0$ independent of $n,k$ and $r$, such that
 $$|f^k(s_2)-f^k(s_1)|\le C'.$$
 Finally, by Condition $(1)$,
 $$f^k(s_1) = \varphi(\tilde{r}_k) \hspace{1mm} \text{and} \hspace{1mm} f^{k+n}(s_2)= \varphi(\tilde{r}_{k+n})$$
 and so
 \begin{align*}
 f^n(\sigma^k(r)) &= f^n(s_3) + O(1) \\
 &=f^{n+k}(s_2) - f^k(s_2) + O(1)\\
 &=\varphi(\tilde{r}_{n+k}) - \varphi(\tilde{r}_k) + O(1),
\end{align*}
where the implied constant term is independent of $n,k$ and $r$.
\end{proof}
The main result of this section is the following. Recall that $\nu$ denotes the Patterson-Sullivan measure on $\partial G$.
\begin{proposition}
The quantities, $\Lambda_i$ and $\sigma^2_i$ do not depend on $i=1,...,m$.
\end{proposition}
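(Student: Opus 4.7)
The plan is to express $\Lambda_i$ and $\sigma_i^2$ as quantities intrinsic to the Patterson-Sullivan system $(\partial G,\nu)$, so that both are manifestly independent of the choice of maximal component $B_i$. First, from Proposition $4.6$ (applied at $s=0$) together with the expansion $(2.1)$ on each aperiodic iterate $\sigma^{p_i}$, I obtain $\Lambda_i = \int f\,d\mu_i$ and $\sigma_i^2 = \lim_{n\to\infty}\tfrac{1}{n}\int(f^n - n\Lambda_i)^2\,d\mu_i$, where $\mu_i$ is the measure of maximal entropy for $(\Sigma_{B_i},\sigma)$. Birkhoff's ergodic theorem on the aperiodic components then gives $f^n(x)/n \to \Lambda_i$ for $\mu_i$-a.e. $x \in \Sigma_{B_i}$, and Lemma $4.7$ translates this into $\varphi(\tilde{x}_n)/n \to \Lambda_i$ $\mu_i$-a.e., with a uniform $O(1/n)$ error.

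Next I would transport these statements to $\partial G$ via the coding of infinite admissible paths in $\mathcal{G}$ that eventually live in $B_i$ into geodesic rays in the Cayley graph $C(G)$; call the image $E_i \subset \partial G$. Using standard Patterson-Sullivan / thermodynamic-formalism arguments (as in \cite{coor}, \cite{patcat}), the pushforward of $\mu_i$ under the coding is absolutely continuous with respect to $\nu|_{E_i}$. Hence the limit function $F(\gamma) := \lim_{n\to\infty}\varphi(\gamma_n)/n$ is defined and equal to $\Lambda_i$ for $\nu$-a.e.\ $\gamma \in E_i$. Since the maximal components exhaust the "asymptotic part" of $\mathcal{G}$, the union of the $E_i$ has full $\nu$-measure in $\partial G$.

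The crucial observation is now that $F$ is $G$-invariant modulo $\nu$-null sets, after which ergodicity of $\nu$ under the left $G$-action forces $F$ to be $\nu$-a.e.\ equal to a single constant $\Lambda$, yielding $\Lambda_i = \Lambda$ for every $i$. Invariance follows from Condition $(2)$: for fixed $g \in G$ and any $\gamma \in \partial G$, the geodesic approximants $(g\gamma)_n$ and $g\gamma_n$ satisfy $d_L((g\gamma)_n, g\gamma_n) = O(1)$, and left-Lipschitzness of $\varphi$ in $d_L$ gives $|\varphi(g\gamma_n) - \varphi(\gamma_n)| \le C|g|$; dividing by $n$ and sending $n \to \infty$ yields $F(g\gamma) = F(\gamma)$.

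The argument for $\sigma_i^2$ follows the same template, but the technical core is harder. Having fixed $\Lambda_i = \Lambda$ across components, Lemma $4.7$ rewrites the variance as $\lim_{n\to\infty}\tfrac{1}{n}\int(\varphi(\tilde{x}_n) - n\Lambda)^2\,d\mu_i(x)$, and I would identify this limit with an asymptotic quadratic form defined intrinsically on $(\partial G,\nu)$ — for instance by producing, via the spectral gap in Proposition $4.6$, a coboundary decomposition $f - \Lambda = u - u\circ\sigma + h$ with $h$ a martingale-difference cocycle whose $L^2$-norm descends to the boundary. The main obstacle is precisely this last step: unlike the first-order quantity $\Lambda_i$, which is controlled by an almost-sure ergodicity argument, the second-order quantity $\sigma_i^2$ requires an identification at the level of quadratic forms. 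For this I would appeal directly to the Calegari–Fujiwara framework \cite{cf}, where $\sigma^2$ arises as the variance in a CLT for the weighted measures $\nu_n$; since the limiting Gaussian is defined independently of any choice of component, so is its variance, giving $\sigma_i^2 = \sigma_j^2$.
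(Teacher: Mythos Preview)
Your overall strategy---transport to $(\partial G,\nu)$ and invoke ergodicity of the $G$-action---is exactly the paper's, but several steps are either wrong or missing.

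First, the $G$-invariance argument is misattributed. To show $F(g\gamma)=F(\gamma)$ you need $|\varphi(g\gamma_n)-\varphi(\gamma_n)|$ bounded independently of $n$. But $d_L(g\gamma_n,\gamma_n)=|\gamma_n^{-1}g^{-1}\gamma_n|$ is generally unbounded in $n$; it is $d_R(g\gamma_n,\gamma_n)=|g|$ that is bounded. So this step requires \emph{right}-Lipschitzness, not left. (Left-Lipschitzness is what makes $F$ well-defined on equivalence classes of rays; right-Lipschitzness is what makes it $G$-invariant. The paper uses both; see the sentence citing Lemma~4.3 of \cite{cf}.)

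Second, your transport step is not justified. The claim that the pushforward of $\mu_i$ is absolutely continuous with respect to $\nu|_{E_i}$ is not obvious---points of $\Sigma_{B_i}$ need not correspond to geodesics based at the identity. The paper instead lifts $\nu$ to a measure $\widehat\nu$ on the set $Y\subset\Sigma_A$ of infinite $\ast$-based paths and uses the Calegari--Fujiwara fact (their Lemmas~4.19 and~4.22) that $\mu_i(E)>0$ implies $\sigma_\ast^k\widehat\nu(E)>0$ for some $k$. This weaker relation, plus the observation that your convergence statements are $\sigma$-preimage-stable, is what actually produces a positive-$\nu$ set in $\partial G$.

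Third, and most seriously, your treatment of $\sigma_i^2$ is not a proof. The martingale decomposition is only sketched, and the appeal to \cite{cf} is circular: Calegari--Fujiwara's variance is defined via their measures $\nu_n$, and showing it coincides with each $\sigma_i^2$ is essentially the content of the proposition. The paper avoids this entirely by a neat device: instead of working with the first-order limit $f^n/n$, it applies the same ergodicity scheme to the full CLT distribution function $\mu_i\{x:(f^n(x)-n\Lambda_i)/\sqrt n\le t\}$, whose limit is the Gaussian with parameters $(\Lambda_i,\sigma_i^2)$. Since a Gaussian is determined by its mean and variance, one ergodicity argument yields both $\Lambda_i$ and $\sigma_i^2$ simultaneously. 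This is the missing idea in your proposal.
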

\begin{proof}
\indent Let $Y \subset \Sigma_A$ consist of all elements in $\Sigma_A$ that begin with the $\ast$ vertex and do not end in infinitely many zeros. Let $h : Y \to \partial G$ denote the natural map arising from the labeling given in Definition $2.5$. In \cite{cf} Calegari and Fujiwara construct a measure $\mu$ on $\Sigma_A$ such that
\begin{equation}
\mu = \lim_{m\to \infty} \frac{1}{m} \sum_{k=0}^m \sigma_\ast^k\widehat{\nu},
\end{equation}
where $\widehat{\nu}$ is the unique measure on $Y$ that pushes forward under $h$ to the measure
$$\nu' = \lim_{n\to\infty} \frac{1}{n}\sum_{|g|\le n} \lambda^{-|g|} \delta_g.$$ Here, $\sigma_\ast$ denotes the push forward, i.e. $\sigma_\ast^k\widehat{\nu}( \cdot ) = \widehat{\nu}(\sigma^{-k} \cdot )$ for all $k \in \mathbb{Z}_{\ge 0}$.  The measure $\nu'$ is, up to scaling, the Patterson-Sullivan measure $\nu$.\\ \indent
Let $\mu_i$ denote the measure $\mu$ restricted to $\Sigma_{B_i}$. By comparing Calegari and Fujiwara's construction of $\mu$ to Parry's construction of the measure of maximal entropy, $\widehat{\mu}_i$ for $\Sigma_{B_i}$ \cite{parry}, we see that $\mu_i$ and $\widehat{\mu}_i$ agree up to scaling. We can assume that $\mu_i$ is scaled to be a probability measure, so that it agrees with $\widehat{\mu}_i$.\\ \indent
There is a stronger relationship between $\mu$ and $\widehat{\nu}$ than that given in $(4.1)$. We can deduce from Lemma $4.19$ and Lemma $4.22$ in \cite{cf} that if $\mu_i(E) >0$ for some set $E \subset \Sigma_{B_i}$, then there exists $k\in\mathbb{Z}_{\ge0}$ such that $\sigma_\ast^k\widehat{\nu}(E)>0$. This property is the key ingredient that allows us to compare the $\widehat{\nu}$ measure with the $\mu$ measure.\\ \indent
The measure $\mu_i$ is ergodic with respect to $\sigma$ on $\Sigma_{B_i}$ and by the ergodic theorem, if $g \in L^1(\Sigma_{B_i}, \mu_i)$ then
$$\frac{1}{m} g^m(z) \rightarrow \int g \ d \mu_i,$$
as $m\to\infty$, for $\mu_i$ a.e $z \in \Sigma_{B_i}$. We define
$$F(n,x) = \left\{ r \in \Sigma_{B_i} : \frac{f^n(r) - \Lambda_i n}{\sqrt{n}} \le x\right\}$$
and
$$\mu(z,m) = \frac{1}{m} \sum_{k=0}^m \delta_{\sigma^k z}.$$
Throughout the following it is helpful to keep the following expression in mind,
$$ \int \mathbbm{1}_{F(n,x)} \ d \mu(z,m)  = \frac{1}{m}\#\left\{0\le j \le m: \frac{f^n(\sigma^j(z)) - \Lambda_i n}{\sqrt{n}} \le x \right\}$$
where $\mathbbm{1}_{F(n,x)}$ denotes the indicator function for $F(n,x)$. To simplify our notation in the following, if $\sigma_i^2=0$, then we take
$$\frac{1}{\sqrt{2 \pi} \sigma_i} \int_{-\infty}^x e^{-t^2/2\sigma_i^2} \ dt$$
to be the Heaviside function. The central limit theorem for subshifts of finite type \cite{cp} implies that there exists a set $N_i \subset \Sigma_{B_i}$ with $\mu_i(N_i)=1$, such that for all $x \in \mathbb{R}$ and $z \in N_i$,
\begin{align*}
\lim_{n\to \infty} \lim_{m\to \infty} \int \mathbbm{1}_{F(n,x)} \ d \mu(z,m) &= \lim_{n\to \infty} \mu_i (F(n,x))\\
&= \frac{1}{\sqrt{2 \pi} \sigma_i} \int_{-\infty}^x e^{-t^2/2\sigma_i^2} \ dt.
\end{align*}
We note that if $z\in\Sigma_{A}$ satisfies the above convergence, then any pre-image $y \in \sigma^{-1}(z)$ also satisfies the above convergence. Also, from the above discussion, there exists $k\in\mathbb{Z}_{\ge0}$ such that $\sigma_\ast^k\widehat{\nu}(N_i) >0$. Combining these observations implies that there exists a set $E_i \subset Y$ of positive $\widehat{\nu}$ measure and for $x \in \mathbb{R}$,
\begin{equation}
\lim_{n\to \infty} \lim_{m\to \infty} \int \mathbbm{1}_{F(n,x)} \ d \mu(y,m) = \frac{1}{\sqrt{2 \pi} \sigma_i} \int_{-\infty}^x e^{-t^2/2\sigma_i^2} dt,
\end{equation}
when $y \in E_i$. Hence, for each $i=1,...,m$, $h(E_i) \subset \partial G$ has positive $\nu$ measure.\\ \indent
We define the set $S_i \subset \partial G$ to be the collection of elements in $\partial G$ that have a corresponding infinite geodesic ray $\gamma$ such that for all $x \in \mathbb{R}$,
\begin{align*}
\lim_{n \to \infty} \limsup_{m\to \infty} \frac{1}{m} \#\left\{0\le j \le m: \frac{\varphi(\gamma_{j+n}) - \varphi(\gamma_j) - \Lambda_i n}{\sqrt{n}} \le x \right\} &= \\
&\hspace{-1cm} \frac{1}{\sqrt{2 \pi} \sigma_i} \int_{-\infty}^x \hspace{-2mm} e^{-t^2/2\sigma_i^2} dt.
\end{align*}
Since $\varphi$ is Lipschitz in the left and right word metric, if $\gamma_1, \gamma_2$ are two geodesic rays with the same end point in $\partial G$, then $\gamma_1$ satisfies the above convergence if and only if $\gamma_2$ does. Further, as $\varphi$ is Lipschitz in the right word metric $S_i$ is $G$ invariant. See Lemma $4.3$ in \cite{cf} for a more detailed explanation of these last two points. \\ \indent
This $G$ invariance implies that, by the ergodicity of the action of $G$ on $\partial G$ with respect to $\nu$, $\nu(S_i)$ either has full measure or zero measure. 
However, Lemma $4.7$ and expression $(4.2)$ imply that $h(E_i) \subset S_i$. To see this note that for $y \in E_i$,
$$\frac{1}{m}\#\left\{0\le j \le m: \frac{\varphi(h(y)_{j+n}) - \varphi(h(y)_j) - \Lambda_i n}{\sqrt{n}} \le x \right\}$$
is equal to 
$$\frac{1}{m}\#\left\{0\le j \le m: \frac{f^n(\sigma^j(y)) + O(1) - \Lambda_i n}{\sqrt{n}} \le x \right\} = \int \mathbbm{1}_{F(n,x+O(n^{-1/2}))} \ d \mu(y,m),$$
where the above error term arises from the application of Lemma $4.7$. This error term does not affect the convergence exhibited in $(4.2)$ and we deduce that $h(E_i) \subset S_i$. Since $\nu(h(E_i))>0$, $S_i$ has full measure. It follows that the $S_i$ coincide and hence that $\Lambda_i$ and $\sigma^2_i$ do not depend on $i=1,...,m$ as required.
\end{proof}
From now on, we use the notation
$$\Lambda_\varphi : = \frac{d}{ds} P_i(sf)\Big|_{s=0} \text{ and } \sigma_\varphi^2 := \frac{d^2}{ds^2} P_i(sf)\Big|_{s=0},$$
for any $i=1,...,m$.\\ \indent
By the above discussion $\Lambda_\varphi$ and $\sigma_\varphi^2$ are well defined i.e. independent of the choice of maximal component. Computing the Taylor expansion of each $P_i(sf)$ in a neighbourhood of zero gives the following.
\begin{lemma}
There exists a neighbourhood $U$ of $0$ in $\mathbb{C}$ such that for $s \in U$ and for each $i=1,...,m$,
\begin{equation} \label{eq:3}
P_i(sf) = h + \Lambda_\varphi s + \sigma_\varphi^2s^2/2 + O(s^3)
\end{equation}
as $s\to 0$.
\end{lemma}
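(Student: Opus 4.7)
The plan is to obtain this expansion as a direct application of Taylor's theorem to the pressure function, using the relationship between $P_i(sf)$ and the pressure of a Hölder function on an aperiodic subshift established in Section 4.1, together with the identification of the first two derivatives afforded by Proposition 4.8.

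Recall from the construction preceding Proposition 4.6 that $P_i(sf) = P(s\tilde{f}^0)/p_i$, where $\tilde{f}^0(x) = f^{p_i}(x)$ and the pressure $P(\cdot)$ is computed for the mixing (aperiodic) subshift $(\Sigma_{B_i^0}, \sigma^{p_i})$. First I would verify the constant term: since the topological entropy of $(\Sigma_{B_i^0}, \sigma^{p_i})$ equals $p_i h$, we have $P(0)/p_i = h$, giving the leading term.

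Next I would apply the standard expansion (2.1) of the introduction, which is valid for any real-valued Hölder function on an aperiodic subshift and extends analytically to a complex neighborhood $U$ of the origin. Applied to $\tilde{f}^0$, this yields
\[
P(s\tilde{f}^0) = p_i h + s \int \tilde{f}^0 \, d\mu_{B_i^0} + s^2 \sigma_{B_i^0}^2/2 + s^3 \psi_i(s),
\]
with $\psi_i$ analytic on $U$. Dividing by $p_i$ gives an expansion of the required form for $P_i(sf)$ in which the linear and quadratic coefficients are, by construction,
\[
\frac{d}{ds} P_i(sf)\Big|_{s=0} = \Lambda_i \qquad \text{and} \qquad \frac{d^2}{ds^2} P_i(sf)\Big|_{s=0} = \sigma_i^2.
\]

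Finally I would invoke Proposition 4.8 to replace $\Lambda_i$ and $\sigma_i^2$ by the common values $\Lambda_\varphi$ and $\sigma_\varphi^2$, obtaining the claimed expansion with an $O(s^3)$ remainder that is uniform in $i=1,\ldots,m$ after shrinking $U$ so that $U$ lies in the intersection of the complex neighborhoods where each $\psi_i$ is analytic (there are only finitely many components). There is no real obstacle here: the content of the lemma is entirely bookkeeping, the analytic input coming from the standard thermodynamic formalism expansion (2.1) and the independence-of-component already secured in the previous subsection.
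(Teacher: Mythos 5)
Your proposal is correct and follows essentially the same route as the paper: the paper obtains the lemma by computing the Taylor expansion of each $P_i(sf)$ at $s=0$ via the standard pressure expansion (2.1) applied to $\tilde{f}^0$ on the aperiodic system $(\Sigma_{B_i^0},\sigma^{p_i})$, with the coefficients identified as $\Lambda_\varphi$ and $\sigma_\varphi^2$ through Proposition 4.8. Your additional remarks on the constant term $P(0)/p_i=h$ and on shrinking $U$ over the finitely many components are exactly the bookkeeping the paper leaves implicit.
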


\section{Cohomology Conditions}
\subsection{Positive variance}
Throughout the following we use the notation established in the previous two sections. The aim of this section is to characterise the case that $\sigma_\varphi^2 = 0$. Let $B_i$ be a maximal component with cyclic decomposition
$$\Sigma_{B_i} = \bigsqcup_{j=0}^{p_i-1} \Sigma_{B_i^j}.$$
Let $B_i^G$ denote the elements in $G$ that can be realised as a word corresponding to a path contained in the component $B_i$. Specifically, let $\rho$ denote the labeling map from Definition $2.5$, then $B_i^G$ is the set,
$$\{ g \in G: g = \rho(e_0)\rho(e_1)...\rho(e_{n-1}) \text{ for some path with edges $e_0,...,e_{n-1}$ in $B_i$}\}.$$
Recall that for small $s$, the spectral radius of the operator $L_{C_i,sf}$ is given by the modulus of $e^{P_i(sf)}$. Furthermore, $P_i(sf)$ denotes the quantity $P(s\tilde{f}^0)/{p_i}$ where $\tilde{f}^0$ is the function $f^{p_i}$ restricted to $\Sigma_{B_i^0}$.
\begin{lemma} \label{cohomcond}
Suppose $\varphi$ satisfies Condition $(1)$ and Condition $(2)$ with associated potential $f:\Sigma_A\to \mathbb{R}$. Let $(f^{p_i})^n(x)$ denote $f^{p_i}(x)+f^{p_i}(\sigma^{p_i}(x)) + ...+f^{p_i}(\sigma^{p_i(n-1)}(x))$. Then, the following are equivalent
\begin{enumerate}
\item $\sigma_\varphi^2 =0$,
\item The function $f^{p_i}$ on $(\Sigma_{B_i^0},\sigma^{p_i})$ is cohomologous to a constant,
\item $\{(f^{p_i})^n(x)-np_i\Lambda_\varphi : x \in \Sigma_{B_i^0}, n\in \mathbb{Z}_{\ge 0}\}$ is bounded,
\item $\{(f^{p_i})^n(x) - np_i\Lambda_\varphi : x \in \Sigma_{B_i^j}, n\in \mathbb{Z}_{\ge 0}\}$ is bounded for $j = 0,1,...,p_i-1$,
\item $\{f^n(x) - n\Lambda_\varphi : x \in \Sigma_{B_i}, n\in \mathbb{Z}_{\ge 0}\}$ is bounded,
\item $\{\varphi(g) - |g|\Lambda_\varphi : g \in B_i^G\}$ is bounded,
\item $\{\varphi(g) - |g|\Lambda_\varphi : g \in G\}$ is bounded.
\end{enumerate}
\end{lemma}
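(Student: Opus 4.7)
The strategy is to prove the seven statements equivalent by establishing the chain of consecutive equivalences $(1)\Leftrightarrow(2)\Leftrightarrow(3)\Leftrightarrow(4)\Leftrightarrow(5)\Leftrightarrow(6)\Leftrightarrow(7)$. The first four lie entirely within thermodynamic formalism and the cyclic decomposition of $\Sigma_{B_i}$, and should be essentially standard. The equivalence $(5)\Leftrightarrow(6)$ bridges the symbolic and group-theoretic sides via Lemma~4.7. The final implication $(6)\Rightarrow(7)$, which passes from the set $B_i^G$ to all of $G$, is the main obstacle; I will handle it by an ergodicity argument on the Gromov boundary modelled on the proof of Proposition~4.8.

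For $(1)\Leftrightarrow(2)$, by construction $p_i\sigma_\varphi^2$ equals the second derivative at $s=0$ of $P(s\tilde{f}^0)$, where $\tilde{f}^0=f^{p_i}|_{\Sigma_{B_i^0}}$ lives on the aperiodic mixing system $(\Sigma_{B_i^0},\sigma^{p_i})$. The variance identity recalled in Section~2 identifies this with the asymptotic variance of $\tilde{f}^0$ with respect to the measure of maximal entropy $\mu_i$, which vanishes precisely when $\tilde{f}^0$ is cohomologous to a constant; the constant must equal $\int \tilde{f}^0\, d\mu_i = p_i\Lambda_\varphi$. For $(2)\Leftrightarrow(3)$ I invoke the bounded-Birkhoff-sum criterion for cohomology stated in Section~2. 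For $(3)\Leftrightarrow(4)$ and $(4)\Leftrightarrow(5)$, I use the cyclic decomposition $\Sigma_{B_i}=\bigsqcup_j\Sigma_{B_i^j}$ together with the fact that $\sigma$ cyclically permutes these pieces: any $x\in\Sigma_{B_i^j}$ can be shifted by at most $p_i-1$ steps into $\Sigma_{B_i^0}$, and any $n$-step Birkhoff sum differs from a $\lfloor n/p_i\rfloor$-step Birkhoff sum of $f^{p_i}$ by at most $p_i$ evaluations of the bounded function $f$. Each such discrepancy contributes $O(1)$, and the boundedness statements match up.

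For $(5)\Leftrightarrow(6)$, take $g\in B_i^G$ realised by a path $(v_0,\ldots,v_{n-1})$ in $B_i$, and extend it to a path from $\ast$ by prepending any path of bounded length $k$ from $\ast$ to $v_0$. Property~(3) of Definition~2.5 forces the resulting group element $h_1 g$ to have word length $k+n$, and combining the triangle inequality with the trivial bound $|g|\le n$ gives $|g|=n$. Applying Lemma~4.7 to this extended path and using Condition~(2) to absorb the bounded prefix contribution yields $\varphi(g)=f^n(\sigma^k(r))+O(1)$; the H\"older property of $f$ then lets me replace $\sigma^k(r)$ by any $y\in\Sigma_{B_i}$ that agrees with $(v_0,\ldots,v_{n-1})$ in its first $n$ coordinates, producing the key identity $\varphi(g)=f^n(y)+O(1)$. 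Both directions of $(5)\Leftrightarrow(6)$ follow from this identity, since each $g\in B_i^G$ gives rise to such a $y$ and, conversely, the initial path of any $y\in\Sigma_{B_i}$ produces such a $g$.

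The main step is $(6)\Rightarrow(7)$, since $(7)\Rightarrow(6)$ is immediate from $B_i^G\subset G$. Following the template of Proposition~4.8, I define $T\subset\partial G$ to be the set of boundary points whose representing geodesic rays $\gamma$ satisfy that $\{\varphi(\gamma_n)-n\Lambda_\varphi\}_n$ is bounded. Condition~(2) in the left word metric makes $T$ independent of the chosen geodesic representative, and Condition~(2) in the right word metric makes $T$ invariant under the $G$-action on $\partial G$. Using the Calegari--Fujiwara measure $\mu$ and the comparison with $\widehat{\nu}$ from Lemmas~4.19 and 4.22 of \cite{cf}, I exhibit a set $E\subset Y$ of positive $\widehat{\nu}$-measure whose paths enter and remain in $\Sigma_{B_i}$ after some bounded time; applying assumption~(6) to the tail and absorbing the bounded prefix via Condition~(2) gives $h(E)\subset T$, so $\nu(T)>0$. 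Ergodicity of the $G$-action on $(\partial G,\nu)$ then forces $\nu(T)=1$. Finally, for any $g\in G$ the shadow of $g$ in $\partial G$ has positive $\nu$-measure (a standard fact, see \cite{coor}), hence meets $T$; choosing $\xi\in T$ in this shadow, the geodesic ray from $e$ to $\xi$ passes within bounded $d_L$-distance of $g$ at step $|g|$, and Condition~(2) in the left word metric transfers the $\varphi$-bound from $\gamma_{|g|}$ to $g$, yielding $(7)$.
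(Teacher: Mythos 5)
Your chain $(1)\Leftrightarrow\cdots\Leftrightarrow(6)$ is sound and is essentially the paper's own route: $(1)$--$(5)$ are the standard variance/cohomology/bounded-Birkhoff-sum facts pushed through the cyclic decomposition, and your $(5)\Leftrightarrow(6)$ is the same path-extension argument as in the paper (your explicit verification via property $(3)$ of Definition 2.5 that $|g|$ equals the length of the path in $B_i$ is a detail the paper leaves implicit, and is correct). The construction of a positive-$\widehat{\nu}$-measure set $E$ with $\sigma^k(E)\subseteq\Sigma_{B_i}$ for a fixed $k$, and the inclusion $h(E)\subseteq T$, are also fine.

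The genuine gap is in $(6)\Rightarrow(7)$. Statement $(7)$ asserts a \emph{uniform} bound, $\sup_{g\in G}\left|\varphi(g)-|g|\Lambda_\varphi\right|<\infty$, whereas your boundary argument only yields that for $\nu$-a.e.\ $\xi\in\partial G$ there is \emph{some} bound $M_\xi$ along rays to $\xi$. Writing $T=\bigcup_{M}T_M$ with $T_M$ the set of directions admitting the bound $M$, ergodicity gives $\nu(T)=1$ but produces no single $M$ with $\nu(T_M)=1$; indeed your own $G$-invariance argument (right-Lipschitz) increases the bound by $O(|g|)$ under translation by $g$, so the full-measure set $T$ carries no uniform constant. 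The concluding shadow step therefore gives, for each $g$, only $\left|\varphi(g)-|g|\Lambda_\varphi\right|\le M_{\xi(g)}+C$ for some $\xi(g)\in T$ in the shadow of $g$, with $M_{\xi(g)}$ uncontrolled as $g$ varies --- strictly weaker than $(7)$ (it does not even exclude $\varphi(\cdot)-\Lambda_\varphi|\cdot|$ being unbounded on $G$). Nor can you fix a single $T_M$ of positive measure and argue every shadow meets it: shadows of deep elements have small $\nu$-measure, and $\nu(T_M)>0$ gives no such intersection property. The paper closes this step with a different, and apparently necessary, input: growth quasitightness. By Lemma 4.6 of Gou\"ezel--Math\`eus--Maucourant \cite{gouz} there is a finite set $M\subset G$ with $MB_i^GM=G$; writing $g=m_1bm_2$ with $b\in B_i^G$ and $m_1,m_2$ in the finite set, Condition $(2)$ together with $\bigl||g|-|b|\bigr|=O(1)$ transfers the bound of $(6)$ to all of $G$ uniformly. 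Some coarse surjectivity of $B_i^G$ onto $G$ of this kind is what the uniform statement $(7)$ really needs, and the almost-everywhere boundary statement you derive does not substitute for it.
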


\begin{proof}
$(1) \iff (2)$ This is a standard result. See \cite{pp}.\\
$(2) \iff (3)$ This is proved in \cite{GR}, see Lemma $2.3$.\\
$(3) \iff (4)$ This follows from the discussion leading up to Proposition $4.6$.\\ 
$(4) \iff (5)$ This is a simple exercise.\\
$(5) \iff (6)$ Given $g \in B_i^G$, we can view $g$ as a path contained in the component $B_i$. We can then extend this path on the left to a path that begins at the $\ast$ vertex and on the right so that it ends at the $0$ vertex. Furthermore, there exists $L \in \mathbb{Z}_{\ge 0}$ such that we can always extend a group element in this way by adding at most $L$ new vertices. This extended path corresponds to a group element $g' \in G$ and we have that, by Condition $(2)$,
$$\varphi(g) = \varphi(g') + O(1),$$
where the implied constant is independent of $g$ and $g'$. Then, using the embedding $i: G \to \Sigma_A$ we see that
$$\varphi(g) = f^{|g|}(\sigma^{|g'|-|g|}(i(g'))) + O(1),$$ where the implied constant is independent of $g$. Now choose any $x=(x_k)_{k=0}^\infty \in \Sigma_{B_i}$ for which $x_0,x_1,...,x_{|g|}$ describes the path related to $g$. Then, by the H\" older condition on $f$,
$$\varphi(g)=f^{|g|}(x) +O(1),$$ where the implied constant is independent of $g$ and our choice of $x$. This gives one of our desired implications. Running this argument backwards gives the other.\\
$(6) \iff (7)$ This is a consequence of hyperbolic groups being growth quasitight (see Definition 1.5 in \cite{gtt2}). By Lemma $4.6$ of \cite{gouz} there exists a finite set $M \subset G$ such that $MB_i^G M = G$ (see also Proposition 7.2 of \cite{gtt2}). The conclusion then follows easily from Condition (2).
\end{proof}
\begin{definition}
We say that $\varphi(\cdot) - \Lambda_\varphi |\cdot|$ is unbounded if $\left\{ \varphi(g) - |g|\Lambda_\varphi: g \in G\right\}$ is an unbounded subset of $\mathbb{R}$. 
\end{definition}

\begin{remark}
Lemma $5.1$ characterises the degenerate case for Calegari and Fujiwara's central limit theorem \cite{cf}. This is because, as discussed earlier, the functions considered by Calegari and Fujiwara have an associated H\" older potential and the variance, $\sigma_\varphi^2$, associated to this potential agrees with the variance in Calegari and Fujiwara's central limit theorem.
\end{remark}

\subsection{Other positive variance conditions and consequences of our results}
We can use the positive variance conditions from Lemma \ref{cohomcond} to deduce combinatorial and geometric properties of functions satisfying Condition (1) and Condition (2). The remainder of this section is dedicated to this end.\\ \indent
We begin by defining the following set
$$U =\{ [\gamma] \in \partial G : \{\varphi(\gamma_n)-|\gamma_n|\Lambda_\varphi : n\in \mathbb{Z}_{\ge 0}\} \text{ is unbounded}\}.$$
This set is well defined because $\varphi$ is Lipschitz in the left word metric on $G$. Given $[\gamma],[\gamma'] \in \partial G$, there exists $C>0$ such that
$$|(\varphi(\gamma_n)-|\gamma_n|\Lambda_\varphi) - (\varphi(\gamma'_n)-|\gamma'_n|\Lambda_\varphi)| = |\varphi(\gamma_n)-\varphi(\gamma'_n)| + ||\gamma_n|-|\gamma'_n||\le  C d_L(\gamma_n,\gamma'_n).$$
If $[\gamma] = [\gamma']$, the right hand side of the above is bounded uniformly in $n$ and hence $\{\varphi(\gamma_n)-|\gamma_n|\Lambda_\varphi : n\in \mathbb{Z}_{\ge 0}\}$ is bounded if and only if $\{\varphi(\gamma'_n)-|\gamma'_n|\Lambda_\varphi : n\in \mathbb{Z}_{\ge 0}\}$ is bounded.
\begin{definition}
We say that $\varphi$ is unbounded on the boundary if $\nu(U)>0$.
\end{definition}

\begin{remark}
 As $\varphi$ is Lipschitz in the right word metric, $U$ is $G$-invariant. Therefore by the ergodicity of the action of $G$ on $\partial G$ with respect to $\nu$, $\nu(U)=0$ or $1$. Hence the above definition can be equivalently stated by changing $\nu(U)>0$ to $\nu(U)=1$.
\end{remark}

\begin{proposition}
A function $\varphi:G\to \mathbb{R}$ satisfying Condition $(1)$ and $(2)$ is unbounded on the boundary if and only if $\varphi(\cdot) - \Lambda_\varphi |\cdot|$ is unbounded.
\end{proposition}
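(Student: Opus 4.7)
The plan is to prove both implications via Lemma \ref{cohomcond}, which converts statements about $\varphi$ on $G$ into statements about Birkhoff sums of the potential $f$. The direction ``$\nu(U)>0 \Rightarrow \varphi(\cdot)-\Lambda_\varphi|\cdot|$ is unbounded'' is immediate: if $\{\varphi(g)-|g|\Lambda_\varphi : g\in G\}$ were bounded, then along every geodesic ray $\gamma$ the sequence $\{\varphi(\gamma_n)-|\gamma_n|\Lambda_\varphi\}$ would be bounded, forcing $U=\emptyset$.

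For the converse, the plan is to argue the contrapositive: assume $\nu(U)=0$ and deduce that $\{\varphi(g)-|g|\Lambda_\varphi : g\in G\}$ is bounded. The first step transfers the hypothesis to $\Sigma_A$. Since $\widehat{\nu}$ pushes forward under $h$ to a scalar multiple of $\nu$, for $\widehat{\nu}$-a.e. $y\in Y$ the associated geodesic ray satisfies $\{\varphi(\tilde y_n)-n\Lambda_\varphi : n\ge 0\}$ bounded, and Lemma 4.7 then gives $\{f^n(y)-n\Lambda_\varphi : n\ge 0\}$ bounded. Applying Lemma 4.7 instead to $\sigma^k y$ shows that for every fixed $k\ge 0$, $\sigma^k_\ast\widehat{\nu}$-a.e. $z$ also satisfies $\{f^n(z)-n\Lambda_\varphi : n\ge 0\}$ bounded.

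The main obstacle is the second step: bridging these $\sigma^k_\ast\widehat{\nu}$-almost-sure statements to a $\mu_i$-almost-sure statement on a fixed maximal component $B_i$. This is exactly the measure-comparison property extracted from Lemmas 4.19 and 4.22 of \cite{cf} and already exploited in the proof of Proposition 4.8: if $\mu_i(E)>0$ for some $E\subset\Sigma_{B_i}$, then $\sigma^k_\ast\widehat{\nu}(E)>0$ for some $k$. Applied to the set $E\subset\Sigma_{B_i}$ of points $z$ for which $\{f^n(z)-n\Lambda_\varphi : n\ge 0\}$ is unbounded, this forces $\mu_i(E)=0$. Restricting and normalizing $\mu_i$ on $\Sigma_{B_i^0}$ and passing to the subsequence $n\mapsto p_i n$ then shows that the Birkhoff sums of $f^{p_i}$ for the dynamics $(\Sigma_{B_i^0},\sigma^{p_i})$ with mean $p_i\Lambda_\varphi$ are almost surely bounded.

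The final step is to rule out $\sigma_\varphi^2 > 0$, which is standard. If $\sigma_\varphi^2 > 0$, the central limit theorem for the H\"older potential $f^{p_i}$ on the mixing SFT $(\Sigma_{B_i^0},\sigma^{p_i})$ equipped with its measure of maximal entropy asserts that $(f^{p_i})^n/\sqrt n$ is asymptotically Gaussian with variance $p_i\sigma_\varphi^2 > 0$, so for every fixed $K$ the measure of $\{|(f^{p_i})^n - np_i\Lambda_\varphi|>K\}$ tends to $1$ as $n \to \infty$. On the other hand, monotone convergence applied to the almost-sure boundedness from the previous step produces a $K$ for which $\{\sup_n |(f^{p_i})^n - np_i\Lambda_\varphi|\le K\}$ has measure arbitrarily close to $1$, a contradiction. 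Hence $\sigma_\varphi^2 = 0$, and the equivalence $(1) \iff (7)$ in Lemma \ref{cohomcond} gives the desired boundedness of $\varphi(\cdot)-\Lambda_\varphi|\cdot|$.
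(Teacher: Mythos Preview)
Your proof is correct and uses the same circle of ideas as the paper: Lemma 4.7 to pass between $\varphi$ and Birkhoff sums of $f$, the Calegari--Fujiwara measure comparison between $\sigma^k_\ast\widehat{\nu}$ and $\mu_i$, the CLT on the mixing component $(\Sigma_{B_i^0},\sigma^{p_i})$, and Lemma \ref{cohomcond}. The only difference is that you argue the nontrivial implication by contraposition (bounded on boundary $\Rightarrow \sigma_\varphi^2=0 \Rightarrow$ bounded on $G$), whereas the paper argues it directly (unbounded on $G \Rightarrow \sigma_\varphi^2>0 \Rightarrow$ positive $\mu_i$-measure of unbounded orbits $\Rightarrow$ positive $\nu$-measure), but the chain of implications is the same one read backwards.
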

\begin{proof}
It is clear that if $\varphi$ is unbounded on the boundary then $\varphi(\cdot) - \Lambda_\varphi|\cdot|$ is unbounded.\\ \indent
Conversely, suppose that $\varphi(\cdot) - \Lambda_\varphi |\cdot|$ is unbounded. Let $\nu, \widehat{\nu},\mu$ and $\mu_i$ denote the measures defined in the proof of Proposition $4.8$ and let $h: \Sigma_A \to \partial G$ denote the map defined in this proposition. Since  $\varphi(\cdot) - \Lambda_\varphi |\cdot|$ is unbounded, $f$ satisfies a non-degenerate central limit theorem on a maximal component $B_i$ with respect to the measure of maximal entropy $\mu_i$ on that component, i.e., for $y \in \mathbb{R}$,
$$\lim_{n\to\infty}\mu_i(G(n,y)) = \frac{1}{\sqrt{2 \pi} \sigma_i} \int_{y}^\infty e^{-t^2/2\sigma_i^2} \ dt,$$
where
$$G(n,y) = \left\{ x \in \Sigma_{B_i} : \frac{f^n(x)-\Lambda_i n}{\sqrt{n}} \ge y\right\} \text{ and } \sigma^2_i >0.$$
Hence for any $y \in \mathbb{R}$,
\begin{align*}
\mu_i(\limsup_{n\to\infty} G(n,y)) &= \mu_i\left(\bigcap_{n\ge1}\bigcup_{j\ge n} G(j,y)\right)\\
&\ge \limsup_{n\to\infty} \mu_i(G(n,y)) > 0.
\end{align*}
Now fix $y >0$ and note that
$$\mu\left\{x \in \Sigma_{A} : \{f^n(x) - n\Lambda_i : n\in \mathbb{Z}_{\ge0}\} \text{ is unbounded }\right\} \ge 
\mu_i(\limsup_{n\to\infty} G(n,y)) >0.$$
As in the proof of Proposition $4.8$, the relationship between $\widehat{\nu}$ and $\mu$ implies that
$$\widehat{\nu}\left\{x \in \Sigma_A: x_0=\ast \text{ and } \{f^n(x) - n\Lambda_i : n\in \mathbb{Z}_{\ge0}\} \text{ is unbounded }\right\} >0.$$
Then, by Condition $(1)$ and the H\" older properties of $f$, for $x \in \Sigma_A$,
$$f^n(x) - n\Lambda_i  = \varphi(g) - |g|\Lambda_i + O(1),$$
where $g$ is the unique group element such that $i(g) = (\ast,x_0,...,x_{n-1},0,0,...)$. The implied constant in the above is independent of $x$. Lastly, since $\widehat{\nu}$ pushes forward under $h: \Sigma_A \to \partial G$ to $\nu$ on $\partial G$,
$$\nu\left\{ [\gamma] \in \partial G : \{\varphi(\gamma_n) - |\gamma_n|\Lambda_i : n\in\mathbb{Z}_{\ge0}\} \text{ is unbounded}\right\} >0$$
and $\varphi$ is unbounded on the boundary.
\end{proof}

The following is a combinatorial condition that is equivalent to $\varphi(\cdot) - \Lambda_\varphi |\cdot|$ being unbounded.

\begin{definition}
We say that $\varphi$ is unbounded on a thick domain, if whenever a subset, $H \subset G$ has the property that
$$\{ \varphi(g) - |g|\Lambda_\varphi : g \in H\}$$ 
is bounded, then the asymptotic density of $H$ with respect to $W_n$ is zero, i.e.
$$\lim_{n \to \infty} \ \frac{\#(W_n \cap H  )}{\#W_n} =0. $$
\end{definition}

\begin{lemma}
A function satisfying Condition $(1)$ and Condition $(2)$ is unbounded on a thick domain if and only if $\varphi(\cdot) - \Lambda_\varphi |\cdot|$ is unbounded.
\end{lemma}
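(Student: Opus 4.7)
The plan is to prove the two directions independently, with the reverse direction drawing on Theorem~\ref{CLT}. The forward direction follows by contraposition: if $\varphi(\cdot) - \Lambda_\varphi|\cdot|$ is bounded on $G$, then taking $H = G$ produces a set on which $\{\varphi(g) - |g|\Lambda_\varphi : g \in H\}$ is bounded while $\#(W_n \cap H)/\#W_n = 1 \not\to 0$, so $\varphi$ cannot be unbounded on a thick domain. This step is formal and contains no analytic content.

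For the reverse direction, assume $\varphi(\cdot) - \Lambda_\varphi|\cdot|$ is unbounded. By Lemma~\ref{cohomcond} this is equivalent to $\sigma_\varphi^2 > 0$, which is exactly the non-degeneracy hypothesis of Theorem~\ref{CLT}. Given any $H \subset G$ with $|\varphi(g) - |g|\Lambda_\varphi| \le M$ for all $g \in H$, I would use the inclusion
$$H \cap W_n \subseteq \{g \in W_n : |\varphi(g) - n\Lambda_\varphi| \le M\}$$
together with Theorem~\ref{CLT} applied at $x = \pm M/\sqrt{n}$ to obtain
$$\frac{\#(H \cap W_n)}{\#W_n} \le \frac{1}{\sqrt{2\pi}\,\sigma}\int_{-M/\sqrt{n}}^{M/\sqrt{n}} e^{-t^2/2\sigma^2}\,dt + O\!\left(\frac{1}{\sqrt{n}}\right) = O\!\left(\frac{1}{\sqrt{n}}\right),$$
where the final estimate uses that the Gaussian mass of the shrinking symmetric interval is of order $M/\sqrt{n}$. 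Letting $n \to \infty$ yields asymptotic density zero as required.

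The main thing to guard against is circularity, since Theorem~\ref{CLT} is stated under the hypothesis that $\varphi(\cdot) - \Lambda_\varphi|\cdot|$ is unbounded, and Lemma~\ref{cohomcond} is meant to reformulate precisely that hypothesis. The crucial observation is that the proof of Theorem~\ref{CLT} is built entirely from the analytic positive-variance condition $\sigma_\varphi^2 > 0$ via the spectral analysis of the transfer operators $L_{C_i,sf}$ developed in Section~4 and the Taylor expansion \eqref{eq:3}; it never invokes the thick-domain reformulation. Hence the present lemma is an independent combinatorial consequence of the central limit theorem, complementing the list of equivalent analytic, symbolic and geometric conditions collected in Lemma~\ref{cohomcond}.
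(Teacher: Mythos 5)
Your proof is correct and is essentially the paper's own argument: the forward direction is the trivial contrapositive with $H=G$, and the reverse direction uses exactly the same inclusion $H\cap W_n\subseteq\{g\in W_n: |\varphi(g)-n\Lambda_\varphi|\le M\}$ together with Theorem \ref{CLT} evaluated at $x=\pm M/\sqrt{n}$ to get the $O(n^{-1/2})$ density bound. Your remark on avoiding circularity is accurate (Theorem \ref{CLT} is proved via the spectral/positive-variance machinery and never uses the thick-domain notion), but it does not change the fact that the route is the same as in the paper.
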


\begin{proof}
It is clear that if $\varphi$ is unbounded on a thick domain, then $\varphi(\cdot) - \Lambda_\varphi|\cdot|$ is unbounded.\\
\indent Conversely, suppose that $\varphi(\cdot) - \Lambda_\varphi |\cdot|$ is unbounded and that $H \subset G$ is such that $\{\varphi(g) - \Lambda_\varphi |g| : g\in H\}$ is bounded. There then exists real $M>0$ such that,
$$\#(W_n \cap H )  \le \#\left\{g \in W_n : \frac{\varphi(g) - n\Lambda_\varphi}{\sqrt{n}} \in \left[\frac{-M}{\sqrt{n}}, \frac{M}{\sqrt{n}}\right]\right\}$$
for all $n \ge 1$. Applying Theorem \ref{CLT} then gives that, as $n \to\infty$,
$$  \frac{\#(W_n \cap H  )}{\#W_n} = \frac{1}{\sqrt{2 \pi} \sigma_\varphi} \int_{-M n^{-1/2}}^{Mn^{-1/2}}e^{-t^2/2\sigma_\varphi^2} \ dt + O(n^{-1/2}) = O(n^{-1/2}).$$
\end{proof}
\begin{remark}
\noindent  The proof of Lemma $5.8$ shows that we can replace the limit in Definition $5.7$ with a limit infimum without affecting the class of functions that are unbounded on a thick domain.
\end{remark}

We will now provide a class of functions that satisfy our central limit theorem with positive variance.


\begin{lemma}\label{homomcond}
If $\varphi:G\to \mathbb{R}$ is a non-trivial group homomorphism or an unbounded quasimorphism satisfying Condition (1), then $\sigma_\varphi^2 >0$.
\end{lemma}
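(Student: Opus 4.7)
The plan is to apply the equivalence (1) $\Longleftrightarrow$ (7) of Lemma \ref{cohomcond}, which reduces the claim $\sigma_\varphi^2 > 0$ to showing that the set
$$ \{\varphi(g) - |g|\Lambda_\varphi : g \in G\} $$
is unbounded. I will argue by contradiction: assume this set is bounded uniformly by some constant $M$, and derive a contradiction in each of the two cases.

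The key observation is that both hypotheses force the near-antisymmetry
$$ \varphi(g^{-1}) + \varphi(g) = O(1) $$
uniformly in $g \in G$. For a group homomorphism this is exact, with error zero. For a quasimorphism with defect $A$, applying the defining inequality to the pair $g, g^{-1}$ and using $|\varphi(e)| \le A$ gives $|\varphi(g) + \varphi(g^{-1})| \le 2A$. Since $|g^{-1}| = |g|$, adding the two hypothesised bounds for $g$ and $g^{-1}$ yields
$$ -2|g|\Lambda_\varphi = \bigl(\varphi(g) - |g|\Lambda_\varphi\bigr) + \bigl(\varphi(g^{-1}) - |g|\Lambda_\varphi\bigr) - \bigl(\varphi(g) + \varphi(g^{-1})\bigr) = O(1) $$
uniformly in $g \in G$. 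Since $G$ is non-elementary, it is infinite, so $|g|$ takes arbitrarily large values; this forces $\Lambda_\varphi = 0$. The original boundedness assumption then reduces to the statement that $\varphi$ itself is uniformly bounded on $G$.

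It remains to contradict this conclusion. If $\varphi : G \to \mathbb{R}$ is a non-trivial homomorphism, then its image is a non-trivial subgroup of $\mathbb{R}$, hence either $\alpha\mathbb{Z}$ for some $\alpha > 0$ or dense in $\mathbb{R}$; either way the image is unbounded. If $\varphi$ is an unbounded quasimorphism, unboundedness is the hypothesis. In both cases $\varphi$ is unbounded on $G$, a contradiction. So the assumption that $\{\varphi(g) - |g|\Lambda_\varphi : g\in G\}$ is bounded fails, and Lemma \ref{cohomcond} gives $\sigma_\varphi^2 > 0$.

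The argument is short: all of the heavy lifting has already been done in Lemma \ref{cohomcond}, which translates vanishing of the variance into a boundedness statement on the group. The only step requiring attention is verifying the antisymmetry estimate $\varphi(g^{-1}) + \varphi(g) = O(1)$, which is the sole feature of homomorphisms and quasimorphisms used beyond the hypotheses of Theorems \ref{averaging}--\ref{ldt}; this is immediate in both cases, so I do not anticipate any serious obstacle.
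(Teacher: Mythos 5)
Your proof is correct and follows essentially the same route as the paper: both arguments rest on the near-antisymmetry $\varphi(g^{-1})=-\varphi(g)+O(1)$ together with $|g^{-1}|=|g|$ to force $\Lambda_\varphi=0$, and then invoke the equivalence $(1)\Leftrightarrow(7)$ of Lemma \ref{cohomcond} applied to the unbounded function $\varphi$. The only (harmless) difference is that the paper obtains $\Lambda_\varphi=0$ unconditionally by averaging over $W_n$ via Theorem \ref{averaging}, whereas you derive it pointwise under the contradiction hypothesis, which suffices for the lemma.
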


\begin{proof}
From Theorem \ref{averaging} and the equalities $|g|=|g^{-1}|$ and $\varphi(g)=-\varphi(g^{-1})$ that hold for all $g \in G$, we see that $\Lambda_\varphi =0$. The result follows.
\end{proof}

Combining Propositions $5.6$ and $5.8$ gives the following result.
\begin{corollary}\label{homomcoro}
Suppose $G$ is a non-elementary hyperbolic group and $\varphi: G \to \mathbb{R}$ satisfies Condition (1), Condition (2) and that $\varphi(\cdot) - \Lambda_\varphi |\cdot|$ is unbounded. Then the subset of $\partial G$ consisting of (equivalence classes of) geodesic rays along which $\varphi(\cdot) - \Lambda_\varphi |\cdot|$ is unbounded, has full Patterson-Sullivan measure. Furthermore, if $\varphi(\cdot) - \Lambda_\varphi |\cdot|$ is bounded on $H \subset G$, then
$$\frac{\#(W_n \cap H)}{\#W_n} = O\left(\frac{1}{\sqrt{n}}\right)$$
as $n\to\infty$.
\end{corollary}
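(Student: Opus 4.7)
The corollary is a direct packaging of the two preceding results, so my plan is simply to explain how to combine them.

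For the first assertion, I would invoke Proposition $5.6$: since $\varphi(\cdot) - \Lambda_\varphi |\cdot|$ is unbounded by hypothesis, $\varphi$ is unbounded on the boundary, i.e.\ the set
\[
U =\{ [\gamma] \in \partial G : \{\varphi(\gamma_n)-|\gamma_n|\Lambda_\varphi : n\in \mathbb{Z}_{\ge 0}\} \text{ is unbounded}\}
\]
satisfies $\nu(U)>0$. Then I would appeal to Remark $5.5$: because $\varphi$ is Lipschitz in the right word metric, $U$ is $G$-invariant, and so by ergodicity of the $G$-action on $(\partial G,\nu)$ one has $\nu(U)=1$. This gives the claim about geodesic rays having full Patterson--Sullivan measure.

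For the second assertion I would reproduce the short CLT-based bound in the proof of Lemma $5.8$. Suppose $H\subset G$ has $\varphi(g)-\Lambda_\varphi|g|$ bounded on $H$, say by $M$. Then for every $g\in W_n\cap H$ we have
\[
\frac{\varphi(g)-n\Lambda_\varphi}{\sqrt{n}}\in\left[-\frac{M}{\sqrt{n}},\frac{M}{\sqrt{n}}\right],
\]
so $\#(W_n\cap H)$ is at most the cardinality counted on the right-hand side of the CLT (Theorem \ref{CLT}). Applying Theorem \ref{CLT} with the interval $[-Mn^{-1/2},Mn^{-1/2}]$, the main term is
\[
\frac{1}{\sqrt{2\pi}\sigma_\varphi}\int_{-Mn^{-1/2}}^{Mn^{-1/2}}e^{-t^2/2\sigma_\varphi^2}\,dt = O(n^{-1/2}),
\]
while the Berry--Esseen error term is also $O(n^{-1/2})$. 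Dividing by $\#W_n$ yields $\#(W_n\cap H)/\#W_n = O(n^{-1/2})$, as claimed.

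Since both pieces are essentially repackagings of Proposition $5.6$ and the argument inside Lemma $5.8$, I anticipate no real obstacle; the only subtlety worth flagging is that Theorem \ref{CLT} is applicable precisely because the unboundedness of $\varphi(\cdot)-\Lambda_\varphi|\cdot|$ ensures (via Lemma \ref{cohomcond}) that $\sigma_\varphi^2>0$, so the CLT is non-degenerate and the Berry--Esseen error is meaningful.
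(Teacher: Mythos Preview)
Your proposal is correct and matches the paper's approach exactly: the paper simply states that the corollary follows by combining Proposition $5.6$ and Lemma $5.8$, and you have spelled out precisely that combination (together with Remark $5.5$ to upgrade $\nu(U)>0$ to $\nu(U)=1$). Your additional observation that Lemma \ref{cohomcond} guarantees $\sigma_\varphi^2>0$, so that Theorem \ref{CLT} is applicable, is a helpful clarification that the paper leaves implicit.
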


\section{Counting with Transfer Operators and a Simplification}
We are nearly ready to prove our results. Before doing so, in this section, we explain how we will make use of transfer operators in our proofs. We also establish the notation that we will use and make an observation that will allow us to simplify our analysis.\\ \indent
As mentioned previously, to prove our results, we need an understanding of the sums
$$ \sum_{g \in W_n} e^{s\varphi(g)},$$
for $s\in \mathbb{C}$ with $|s|$ small, as $n\to\infty$. We now show how to express this quantity in terms of transfer operators. This expression highlights the link between the geometrical setting of $\varphi$ on $G$ and the dynamical setting of $f$ on $\Sigma_A$. Let $\chi$ denote the indicator function for the set $ \{ (x_n)_{n=0}^\infty \in \Sigma_A : x_0 = \ast\}$.

\begin{lemma}\label{express}
There exists $\epsilon, \delta >0$ such that for $|s| < \epsilon$, each $L_{C_i,sf}$ has spectrum as described in Proposition $4.6$ and
$$ \sum_{g \in W_n} e^{s\varphi(g)} = \sum_{i=1}^m L^n_{C_i,sf}\chi(\dot{0}) + O\left(e^{n(h-\delta)}\right),$$
where the implied constant is independent of $|s|<\epsilon$.
\end{lemma}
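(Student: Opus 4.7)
The spectral statement is immediate from Proposition 4.6 applied to each maximal component, so the plan is to focus on the identity. I would interpret both sides as weighted counts of paths in $\mathcal{G}$ starting at $\ast$, and compare them combinatorially via Lemma 2.9.

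Using the strongly Markov bijection of Definition 2.5 together with Condition (1), each $g \in W_n$ corresponds to a unique path of length $n$ in $\mathcal{G}$ starting at $\ast$, with $\varphi(g) = f^n(i(g))$. Unrolling $L_{C_i, sf}^n \chi(\dot{0})$ shows that it equals the sum of $e^{s f^n(y)}$ over $y \in \Sigma_{C_i}$ with $y_0 = \ast$, $\sigma^n y = \dot{0}$ and no intermediate shift equal to $\dot{0}$; such $y$ are in natural bijection with paths of length $n$ from $\ast$ in $\mathcal{G}$ whose vertices all lie in $C_i$, i.e., paths avoiding every maximal component $B_j$ with $j \neq i$.

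The key structural input is Lemma 2.9: the maximal components are pairwise disconnected, so each path from $\ast$ meets at most one $B_i$. I would partition $W_n$ into type (a) elements, whose path meets some (unique) $B_i$, and type (b) elements, whose path meets none. A type (a) element through $B_i$ contributes only to $L_{C_i, sf}^n \chi(\dot{0})$, while a type (b) element contributes to every $L_{C_j, sf}^n \chi(\dot{0})$. Combining gives
\begin{equation*}
\sum_{i=1}^m L_{C_i, sf}^n \chi(\dot{0}) = \sum_{g \in W_n} e^{s \varphi(g)} + (m - 1)\, R_n(s),
\end{equation*}
where $R_n(s)$ denotes the $e^{s\varphi(g)}$-weighted count over the type (b) elements of $W_n$.

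It remains to show $R_n(s) = O(e^{n(h-\delta)})$ uniformly for $|s| < \epsilon$. Let $L_{N, sf}$ be the transfer operator on the subshift obtained from $\Sigma_A$ by removing every vertex in every maximal component; by the same path decomposition, $R_n(s) = L_{N, sf}^n \chi(\dot{0})$. The irreducible components of the resulting subshift are exactly the non-maximal irreducible components of $\mathcal{G}$, each of which has spectral radius strictly less than $\lambda = e^h$ by definition of maximality. Quasicompactness (via an argument analogous to the proof of Lemma 4.3) therefore yields $\|L_{N, 0}^n\| \leq C_0 (e^h - 2\delta)^n$ for some $C_0, \delta > 0$; analytic perturbation theory (Proposition 4.5) together with upper semicontinuity of the spectrum produces $\epsilon > 0$ such that $\|L_{N, sf}^n\| \leq C_1 (e^h - \delta)^n$ uniformly for $|s| < \epsilon$, giving the desired bound. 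The main technical point is precisely this uniform spectral estimate on the non-maximal operator in a complex neighbourhood of $s = 0$; once it is in hand, the rest is a clean combinatorial decomposition driven by Lemma 2.9.
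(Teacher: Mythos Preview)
Your combinatorial decomposition is exactly the paper's: both use Lemma 2.9 to see that each path from $\ast$ enters at most one maximal component, so $\sum_i L_{C_i,sf}^n\chi(\dot 0)$ reproduces $\sum_{g\in W_n} e^{s\varphi(g)}$ except that paths avoiding every maximal component are counted $m$ times instead of once, giving the $(m-1)R_n(s)$ error.

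The only difference is in how you bound $R_n(s)$. You introduce the auxiliary operator $L_{N,sf}$ and appeal to quasicompactness plus analytic perturbation to get a uniform spectral radius bound. This is correct, but the paper takes a much shorter route: the number of type~(b) paths is $O(e^{n(h-\nu)})$ for some $\nu>0$ (since every irreducible component they traverse is non-maximal), and $f$ is bounded, so $|e^{sf^n(z)}|\le e^{n|s|\,\|f\|_\infty}$. Choosing $\epsilon<\nu/(2\|f\|_\infty)$ immediately gives $|R_n(s)|=O(e^{n(h-\nu/2)})$ uniformly for $|s|<\epsilon$. Your operator-theoretic argument works and is in the spirit of the estimates used later in Section~8, but here the elementary bound via $\|f\|_\infty$ suffices and avoids the extra machinery.
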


\begin{proof}
Note that
\begin{equation}
 \sum_{g \in W_n} e^{s\varphi(g)} = \sum_{z} e^{s f^n(z)}.
\end{equation}
where the second sum is taken over $\{z \in \Sigma_A : \sigma^n(z)=\dot{0}, z_0 = \ast, z_{n-1}\neq \dot{0}\}$. Hence, the quantity
$$\sum_{i=1}^m L^n_{C_i,sf}\chi(\dot{0}),$$
expresses $(6.1)$ up to overcounting contributions from elements belonging to
\begin{align*}
\{ z \in \Sigma_A : \sigma^n(z)=\dot{0}, z_0&= \ast, z_{n-1}\neq 0 \text{ and the path corresponding}\\
&\hspace{1.5cm} \text{to $z$ does not enter a maximal component}\}.
\end{align*}
 Since the cardinality of this set is $O(e^{n(h-\nu)})$ for some $\nu > 0$ and $f$ is bounded, the result follows. 
\end{proof}

We now establish the notation that we will use throughout the remaining sections. Suppose $G$ is equipped with a generating set $S$. Suppose $G,S$ has associated directed graph $\mathcal{G}$ described by transition matrix $A$. Let $W_n$ denote the elements in $G$ of word length $n$ and let $\#W_n$ denote the cardinality of $W_n$. Let $B$, $B_i$ and $C_i$ for $i=1,...,m$ denote the matrices defined in Section $2$ and suppose that $\varphi:G \to \mathbb{R}$ is a function satisfying Condition $(1)$ and Condition $(2)$. Suppose $\varphi$ has associated potential $f \in F_\theta(\Sigma_A)$. Let $L_{C_i,sf}$ denote the transfer operators defined in Section $4$ and let $Q_{i,k}$ denote the projection valued operators defined after Proposition $4.6$. Denote by $Q_i$ the projection
$$Q_i = \sum_{k=0}^{p_i-1} Q_{i,k}.$$

\noindent Let $\Lambda_\varphi$ and $ \sigma_\varphi^2$ be the quantities related to $\varphi$ that were defined in Section $6$.\\ \indent
Throughout our proofs, we use the notation established above. The following lemma will allow us to simplify our analysis.

\begin{lemma}
Define $\gamma: G \to \mathbb{R}$ by $\gamma(g) =\varphi(g) - |g|\Lambda_\varphi$. Then $\gamma$ satisfies Condition $(1)$ and Condition $(2)$ and the potential related to $\gamma$ is $f - \Lambda_\varphi$. Furthermore
$$\Lambda_{\gamma} = 0 \hspace{2mm} \text{ and } \hspace{2mm} \sigma_{\gamma}^2 = \sigma_\varphi^2.$$
\end{lemma}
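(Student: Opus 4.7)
The plan is straightforward verification against the four claims of the lemma, exploiting the fact that passing from $\varphi$ to $\gamma$ corresponds, at the level of potentials, to subtracting a constant, and that pressure is equivariant under constant shifts.

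For Condition $(1)$, since $\varphi(g) = f^{|g|}(i(g))$ by hypothesis and the Birkhoff sum of $f - \Lambda_\varphi$ telescopes as $(f - \Lambda_\varphi)^n(x) = f^n(x) - n\Lambda_\varphi$, we obtain
$$\gamma(g) \;=\; \varphi(g) - |g|\Lambda_\varphi \;=\; f^{|g|}(i(g)) - |g|\Lambda_\varphi \;=\; (f - \Lambda_\varphi)^{|g|}(i(g)),$$
and $f - \Lambda_\varphi$ lies in $F_\theta(\Sigma_A)$ as it differs from $f$ only by a constant. For Condition $(2)$, if $K$ is a common Lipschitz constant for $\varphi$ in the left and right word metrics, then using $\bigl||g|-|h|\bigr| \le d_L(g,h)$ (and similarly for $d_R$) gives
$$|\gamma(g) - \gamma(h)| \;\le\; |\varphi(g) - \varphi(h)| + |\Lambda_\varphi|\,\bigl||g| - |h|\bigr| \;\le\; (K + |\Lambda_\varphi|)\,d_\ast(g,h),$$
where $d_\ast$ denotes either word metric. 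So $\gamma$ is Lipschitz in both.

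For the identities $\Lambda_\gamma = 0$ and $\sigma_\gamma^2 = \sigma_\varphi^2$, I would invoke the elementary fact that for any continuous potential $\phi$ on a subshift and any constant $c \in \mathbb{R}$, the variational principle gives $P(\phi + c) = P(\phi) + c$. Fix any maximal component $B_i$ with period $p_i$ and write $\tilde{f}^0$ for the restriction of $f^{p_i}$ to $\Sigma_{B_i^0}$; then the potential associated to $\gamma$ on $(\Sigma_{B_i^0}, \sigma^{p_i})$ is $(f - \Lambda_\varphi)^{p_i}|_{\Sigma_{B_i^0}} = \tilde{f}^0 - p_i\Lambda_\varphi$, so for $s$ in a neighbourhood of $0$,
$$P_i\bigl(s(f - \Lambda_\varphi)\bigr) \;=\; \frac{P(s\tilde{f}^0 - sp_i\Lambda_\varphi)}{p_i} \;=\; \frac{P(s\tilde{f}^0)}{p_i} - s\Lambda_\varphi \;=\; P_i(sf) - s\Lambda_\varphi.$$
Differentiating at $s=0$ once gives $\Lambda_\gamma = \Lambda_\varphi - \Lambda_\varphi = 0$, and differentiating twice gives $\sigma_\gamma^2 = \sigma_\varphi^2$, as the linear term in $s$ contributes nothing to the second derivative.

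There is no substantial obstacle here; the only point requiring a moment of care is the bookkeeping that the quantities $\Lambda_\gamma$, $\sigma_\gamma^2$ really are computed from the potential $f - \Lambda_\varphi$ via the same formulas (via $P_i$ on each maximal component) that were used to define $\Lambda_\varphi$, $\sigma_\varphi^2$ from $f$, which is immediate once one observes that $f - \Lambda_\varphi$ is an admissible potential for $\gamma$ in the sense of Condition $(1)$.
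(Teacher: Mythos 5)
Your proposal is correct and follows essentially the same route as the paper: establish that $\gamma$ satisfies Conditions $(1)$ and $(2)$ with potential $f-\Lambda_\varphi$, then use the identity $P_i(s(f-\Lambda_\varphi))=P_i(sf)-s\Lambda_\varphi$ and differentiate at $s=0$. Your version merely spells out the telescoping of the Birkhoff sum and the $p_i$-periodic bookkeeping that the paper leaves implicit.
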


\begin{proof}
It is easy to check that the word length function $g \mapsto |g|$ satisfies Conditions $(1)$ and $(2)$ with related potential given by the constant function with value $1$. It follows that $\gamma$ also satisfies Conditions $(1)$ and $(2)$ with potential $f - \Lambda_\varphi$. Using the notation established in Section $6$, for any chosen maximal component with index $i$,
$$\Lambda_{\gamma} = \frac{d}{ds} P_i(s(f - \Lambda_\varphi)) \Big|_{s=0} \hspace{2mm} \text{ and } \hspace{2mm} \sigma_{\gamma}^2 = \frac{d^2}{ds^2} P_i(s(f - \Lambda_\varphi)) \Big|_{s=0}.$$
For real $s$ we have that, 
$$P_i(s(f - \Lambda_\varphi)) = P_i(sf) -s\Lambda_\varphi,$$
from which the remainder of the lemma easily follows.
\end{proof}

\noindent \textbf{Assumption:} The above lemma implies that, by swapping $\varphi$ to $\gamma$,  it suffices to prove Theorems \ref{averaging}, \ref{CLT} and $1.3$ under the assumption that $\Lambda_\varphi =0$. We assume this throughout the remaining sections.\\ \indent

We are now ready to move on to the proofs of our main results. We begin with the proof of Theorem \ref{averaging}.

\section{Averaging Theorem}

The aim of this section is to prove Theorem \ref{averaging}. Our proof is based around the analysis of the following generating function.
\begin{definition} Let
$$\eta(z,s) = \sum_{n=0}^\infty \frac{z^n}{n} \sum_{g \in W_n} e^{s \varphi(g)}.$$
\end{definition}
We want to study the domain of analyticity for $\eta$.
\begin{lemma}
We have that
$$\eta(z,s) = \sum_{n=0}^\infty \frac{z^n}{n} \sum_{i=1}^m L^n_{C_i,sf} \chi (\dot{0}) + \alpha(z,s),$$
for some function $\alpha(z,s)$ that is bi-analytic in $\{z: |z| < e^{-h + \delta}\} \times \{s: |s| < \epsilon\} $ for some $\epsilon, \delta >0$.
\end{lemma}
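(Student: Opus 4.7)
The plan is essentially a bookkeeping rearrangement of Lemma \ref{express}. Define
$$E_n(s) := \sum_{g \in W_n} e^{s\varphi(g)} - \sum_{i=1}^m L^n_{C_i,sf}\chi(\dot{0}), \qquad \alpha(z,s) := \sum_{n \ge 1} \frac{z^n}{n}\, E_n(s),$$
so that the claimed identity holds tautologically (on any common domain of convergence). The content of the lemma is then that $\alpha$ is bi-analytic on $\{|z| < e^{-h+\delta}\} \times \{|s|<\epsilon\}$ for suitable $\epsilon, \delta > 0$.

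Let $\epsilon, \delta > 0$ be the constants furnished by Lemma \ref{express}, so that $|E_n(s)| \le C e^{n(h-\delta)}$ for all $n \ge 1$ and $|s| < \epsilon$, with $C$ independent of $s$. For any fixed $\delta' \in (0,\delta)$ and any $(z,s)$ with $|z| \le e^{-h+\delta'}$ and $|s| < \epsilon$, the $n$-th summand of $\alpha(z,s)$ is bounded in absolute value by $(C/n)\, e^{-n(\delta-\delta')}$, which is summable uniformly in $(z,s)$. Hence the series defining $\alpha$ converges absolutely and uniformly on every compact subset of $\{|z|<e^{-h+\delta}\} \times \{|s|<\epsilon\}$.

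It remains to note that each summand is jointly holomorphic. The first piece $\sum_{g \in W_n} e^{s\varphi(g)}$ is a finite sum of exponentials, hence entire in $s$. The second piece $L^n_{C_i,sf}\chi(\dot{0})$ is analytic in $s$ via the convergent operator-valued expansion of $s \mapsto L_{C_i,sf}$ around the origin (the same analytic dependence underlying the perturbation-theoretic results of Section 4). Each summand is also polynomial in $z$, hence bi-analytic. Since a uniform limit of jointly holomorphic functions is jointly holomorphic, $\alpha$ is bi-analytic on the stated bidisc.

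The only substantive point is the uniformity in $s$ of the constant in Lemma \ref{express}; without it the error constants could grow with $s$ and spoil the summation argument. But this uniformity is already built into the statement of Lemma \ref{express}, so no additional work is required here.
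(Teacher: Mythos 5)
Your proposal is correct and uses the same decomposition as the paper: both define $\alpha$ as the generating series of the error terms $\omega_n(s) = \sum_{g\in W_n}e^{s\varphi(g)} - \sum_{i=1}^m L^n_{C_i,sf}\chi(\dot 0)$ supplied by Lemma \ref{express}, and both rely crucially on the uniformity in $s$ of the bound $O(e^{n(h-\delta)})$ there. The only divergence is the final step: the paper checks analyticity separately in $z$ (for fixed $s$) and in $s$ (for fixed $z$) and then invokes Hartogs' theorem to upgrade to bi-analyticity, whereas you verify absolute convergence of the series uniformly on compact subsets of the bidisc and conclude joint holomorphy of the limit from joint holomorphy of the summands (each being a polynomial in $z$ times a function analytic in $s$, the latter via the analytic dependence $s\mapsto L_{C_i,sf}$ from Section 4). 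Your route is slightly more elementary and self-contained, since it replaces Hartogs by the Weierstrass-type statement that a locally uniform limit of jointly holomorphic functions is holomorphic; the estimates needed are the same ones the paper already uses, so nothing is lost, and the argument is, if anything, tighter.
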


\begin{proof}
Let $\epsilon, \delta >0$ be as in Lemma $6.1$. Using Lemma $6.1$ we can write, for $|s|<\epsilon$, 
$$\sum_{g \in W_n} e^{s\varphi(g)} = \sum_{i=1}^m L^n_{C_i,sf}\chi(\dot{0}) + \omega_n(s),$$
where $\omega_n(s)$ is analytic in $|s|<\epsilon$ and $\omega_n(s) = O\left(e^{h(n-\delta)}\right)$. The implied constant is uniform in $|s|<\epsilon$. Define 
$$\alpha(z,s) = \sum_{n=0}^\infty \frac{z^n}{n} \ \omega_n(s) .$$
Clearly $\alpha$ satisfies the required identity for the lemma. Further, since the error term associated to $\omega_n$ is independent of $s$, for fixed $|s_0|<\epsilon$, $\alpha(z,s_0)$ is analytic in $\{z:|z|>e^{-h+\delta}\}$. Conversely, for fixed $|z_0| < e^{-\epsilon + \delta}$, $\alpha(z_0,s)$ is analytic in ${|s|<\epsilon}$. Hence, by Hartogs' Theorem (see Theorem $1.2.5$ in \cite{har}), $\alpha(z,s)$ satisfies the required analyticity condition.
\end{proof}

Let $\epsilon>0$ be as in Lemma $6.1$. By the Spectral Radius Theorem and Lemma $6.1$, there exists $\delta'>0$ such that
$$\left|\sum_{i=1}^m L^n_{C_i,sf}\chi(\dot{0})\right| = O\left(e^{n(h+\delta')}\right),$$
where the error term is independent of $|s|<\epsilon$. Lemma $4.6$ and an application of Hartogs' Theorem then implies that $\eta$ is bi-analytic in $\{|z| < e^{-h-\delta'}\}\times \{s:|s|< \epsilon\} $. Taking the derivative of $\eta$ with respect to $s$ at $s=0$ gives,
$$\frac{d}{ds} \eta(z,s) \bigg|_{s=0} = \sum_{n=0}^\infty \frac{z^n}{n} \sum_{g \in W_n} \varphi(g).$$

Let $\epsilon, \delta>0$ be as in Lemma $6.1$. Recall that we have analytic projection valued functions $Q_{i,k}$ for the simple maximal eigenvalues of the transfer operators $L_{C_i,sf}$. For $|s|<\epsilon$ , $Q_{i,k}(s)$ is the eigenprojection associated to the eigenvalue $e^{2\pi i k/p_i}e^{P_i(sf)}$ for $L_{C_i,sf}$. Using these projections we write
\begin{equation}
\sum_{i=1}^m L^n_{C_i,sf}\chi(\dot{0}) = \sum_{i=1}^m \sum_{k=1}^{p_i} e^{2 \pi ink/p_i} e^{nP_i(sf)} Q_{i,k}(s)\chi(\dot{0}) + O(e^{n(h-\delta)}),
\end{equation}
which is valid for $|s|<\epsilon$.\\ \indent
Using identity $(7.1)$, we can apply the same argument as in the proof of Lemma $7.2$, to the function
$$ \sum_{n=0}^\infty \frac{z^n}{n} \sum_{i=1}^m L^n_{C_i,sf} \chi (\dot{0}),$$
to deduce the following.
\begin{lemma}
\begin{equation}
\eta(z,s) =  \sum_{n=0}^\infty \frac{z^n}{n} \sum_{i=1}^m \sum_{k=1}^{p_i} e^{2 \pi ink/p_i} e^{nP_i(sf)} Q_{i,k}(s)\chi(\dot{0}) + \beta(z,s),
\end{equation}
for some $\beta(z,s)$ that is bi-analytic in $\{z: |z| < e^{-h+\epsilon}\} \times \{s:|s| < \delta\}$ for some $\epsilon, \delta>0$.
\end{lemma}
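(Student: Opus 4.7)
The plan is to combine Lemma 7.2 with the spectral decomposition identity (7.1), and show that the accumulated error yields a bi-analytic function.

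First, starting from Lemma 7.2, write
\[
\eta(z,s) = \sum_{n=0}^\infty \frac{z^n}{n} \sum_{i=1}^m L^n_{C_i,sf}\chi(\dot 0) + \alpha(z,s),
\]
where $\alpha(z,s)$ is bi-analytic in $\{|z|<e^{-h+\delta_0}\}\times\{|s|<\epsilon_0\}$ for some $\delta_0,\epsilon_0>0$. By Proposition 4.6, for $|s|<\epsilon$ each $L_{C_i,sf}$ has a spectral gap of size at least $\delta>0$ between the $p_i$ simple maximal eigenvalues and the remainder of the spectrum. Identity (7.1) then gives, for $|s|<\epsilon$,
\[
\sum_{i=1}^m L^n_{C_i,sf}\chi(\dot 0) = \sum_{i=1}^m \sum_{k=1}^{p_i} e^{2\pi i nk/p_i}e^{nP_i(sf)} Q_{i,k}(s)\chi(\dot 0) + r_n(s),
\]
where the norms of the spectral projections $Q_{i,k}(s)$ are uniformly bounded for $|s|<\epsilon$ (by the analyticity provided by Proposition 4.5), and therefore the error term satisfies $|r_n(s)| = O(e^{n(h-\delta)})$ uniformly in $|s|<\epsilon$.

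Next I would define
\[
\beta(z,s) := \alpha(z,s) + \sum_{n=0}^\infty \frac{z^n}{n} \, r_n(s).
\]
For each fixed $|s_0|<\epsilon$, the uniform bound $|r_n(s_0)|=O(e^{n(h-\delta)})$ implies that $\sum_n \tfrac{z^n}{n} r_n(s_0)$ defines an analytic function of $z$ on $\{|z|<e^{-h+\delta}\}$. For each fixed $z_0$ with $|z_0|<e^{-h+\delta}$, the series $\sum_n \tfrac{z_0^n}{n} r_n(s)$ is a uniformly convergent series of functions analytic in $\{|s|<\epsilon\}$ (each $r_n(s)$ is a difference of analytic objects, hence analytic), and hence defines an analytic function of $s$. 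Taking $\epsilon_1 = \min(\epsilon,\epsilon_0)$ and $\delta_1 = \min(\delta,\delta_0)$, Hartogs' theorem upgrades separate analyticity in each variable to joint bi-analyticity of $\sum_n \tfrac{z^n}{n} r_n(s)$ on the bidisc $\{|z|<e^{-h+\delta_1}\}\times\{|s|<\epsilon_1\}$. Adding $\alpha(z,s)$, which is already bi-analytic on (a possibly larger) such bidisc, shows that $\beta(z,s)$ is bi-analytic on $\{|z|<e^{-h+\delta_1}\}\times\{|s|<\epsilon_1\}$, which is precisely the conclusion of the lemma.

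No step is particularly difficult; the only subtlety is ensuring that the bounds on the spectral projections $Q_{i,k}(s)$ and the remainder $r_n(s)$ hold uniformly in $s$ on a common neighborhood of $0$, which is guaranteed by Proposition 4.5 and the uniform spectral gap in Proposition 4.6. Everything else is a mechanical combination of Lemma 7.2 with (7.1) followed by Hartogs' theorem, in direct parallel to the argument used to prove Lemma 7.2 itself.
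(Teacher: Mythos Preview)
Your proposal is correct and follows essentially the same approach as the paper: the paper simply says to apply the argument of Lemma~7.2 to the transfer-operator series using identity~(7.1), and you have written out exactly those details, including the Hartogs step and the observation that the $O(e^{n(h-\delta)})$ bound on the remainder must be uniform in $|s|<\epsilon$.
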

We then turn our attention to the double sum in $(7.2)$.
\begin{lemma}
Define
\begin{equation} 
\psi_n(s):=\sum_{i=1}^m \sum_{k=1}^{p_i} e^{2 \pi ink/p_i} e^{nP_i(sf)} Q_{i,k}(s)\chi(\dot{0}).
\end{equation}
Then each $\psi_n$ is analytic in a neighbourhood of $0$ and
$$ \psi_n'(0)=O\left(e^{nh}\right).$$
\end{lemma}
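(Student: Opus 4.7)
The plan is to observe that $\psi_n$ is a finite sum of products of functions that are each analytic in a neighbourhood of $0$ by Proposition $4.6$ and the analytic perturbation theory invoked after it: the pressures $s \mapsto P_i(sf)$ are analytic, and the eigenprojections $s \mapsto Q_{i,k}(s)$ are analytic as operators on $F_\theta(\Sigma_A)$ (hence $s \mapsto Q_{i,k}(s)\chi(\dot 0)$ is analytic as an $F_\theta$-valued function, and in particular its evaluation at $\dot 0$ is analytic). So analyticity of $\psi_n$ in a neighbourhood of $0$ (uniform in $n$, by the uniform spectral gap from Proposition $4.6$) is immediate.

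For the bound on $\psi_n'(0)$, I would simply differentiate term by term. For each $i,k$,
\begin{equation*}
\frac{d}{ds}\Bigl[e^{nP_i(sf)} Q_{i,k}(s)\chi(\dot 0)\Bigr]_{s=0} = n\, P_i'(0)\, e^{nh}\, Q_{i,k}(0)\chi(\dot 0) + e^{nh}\, Q_{i,k}'(0)\chi(\dot 0),
\end{equation*}
using $P_i(0) = h$. Here lies the entire point of the computation: by Proposition $4.8$ (and the standing assumption $\Lambda_\varphi = 0$ established at the end of Section $6$), $P_i'(0) = \Lambda_\varphi = 0$, so the $n e^{nh}$ term vanishes. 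What remains is
\begin{equation*}
\psi_n'(0) = e^{nh} \sum_{i=1}^{m}\sum_{k=1}^{p_i} e^{2\pi i n k/p_i}\, Q_{i,k}'(0)\chi(\dot 0).
\end{equation*}
Since there are only finitely many terms, each a fixed element of $F_\theta$ (bounded in sup-norm independently of $n$) multiplied by a unit-modulus factor, taking absolute values (in $\mathbb{C}$, via evaluation at $\dot 0$) yields $|\psi_n'(0)| = O(e^{nh})$, with an implied constant depending only on the operators $Q_{i,k}'(0)$ and the periods $p_i$.

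The only point that really needs care is the vanishing of the $n e^{nh}$ term. The naive differentiation would give a contribution of order $n e^{nh}$, which is too large for the subsequent Tauberian analysis used to prove Theorem \ref{averaging}. This is precisely why the reduction in Lemma $6.3$ (replacing $\varphi$ by $\varphi - \Lambda_\varphi|\cdot|$ so that $\Lambda_\varphi = 0$) was made: it forces $P_i'(0) = 0$ across every maximal component simultaneously, which is itself a non-trivial fact relying on Proposition $4.8$ (that $\Lambda_i$ does not depend on $i$). Once one grants this input, the bound is an immediate consequence of analytic perturbation theory and the finiteness of the set of maximal components.
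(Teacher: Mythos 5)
Your proposal is correct and follows the paper's own argument: analyticity of the eigenprojections $Q_{i,k}$ and the pressures $P_i(sf)$, term-by-term differentiation, and the Taylor expansion of the pressure together with the standing reduction $\Lambda_\varphi=0$ (so $P_i'(0)=0$), which kills the $n e^{nh}$ contribution and leaves a sum of finitely many fixed terms of size $O(e^{nh})$. The paper's proof is exactly this, stated more tersely.
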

\begin{proof}
We recall that the projections $Q_{i,k}$ are analytic in a small neighbourhood of the origin. Hence the maps $s \mapsto Q_{i,k}(s)\chi(\dot{0})$ are analytic in a neighbourhood of the origin. Differentiating each $\psi_n$ and using the Taylor expansions for the pressure $(4.3)$ (recalling that $\Lambda_\varphi=0$), gives the required result.
\end{proof}
Taking the derivative of expression $(7.2)$ with respect to $s$ at $s=0$ and then rearranging, we obtain
\begin{equation}
 \sum_{n=0}^\infty z^n \left( -\frac{\psi_n'(0)}{n} + \sum_{g \in W_n} \frac{\varphi(g)}{n} \right) = \frac{d}{ds} \beta(z,s)\Big|_{s=0}.
\end{equation}
The domain of bi-analyticity for $\beta$ implies that the radius of convergence of the above series is strictly greater than $e^{-h}$.\\ \indent
We are now ready to prove our result.

\begin{proof} [Proof of Theorem \ref{averaging}]
\noindent Equation $(7.4)$ implies that
$$  \sum_{g \in W_n} \frac{\varphi(g)}{n} =  \frac{\psi_n'(0)}{n} + O(e^{n(h-\delta)})$$
for some $\delta >0$.\\ \indent
Dividing the above identity by $\#W_n$ and then applying Proposition $2.7$ and Lemma $7.4$ implies that
$$\frac{1}{\# W_n}\sum_{g \in W_n}\frac{\varphi(g)}{n} = O\left(\frac{1}{n}\right)$$
as required.
\end{proof}

\section{Central Limit Theorem}
We now move on to the proof of Theorem \ref{CLT}. Throughout this section, suppose that $\varphi$ is unbounded. By Lemma $5.1$ we have that $\sigma_\varphi^2 >0$. Recall that we want to study the convergence of the distributions
$$F_n(x) = \frac{1}{\# W_n} \# \left\{ g \in W_n : \frac{\varphi(g)}{\sqrt{n}} \le x \right\}$$
as $n \to \infty$. A classical way of studying this convergence is to take the Fourier transforms $\widehat{F}_n: \mathbb{R} \to \mathbb{R}$ of each $F_n$ and to apply a result from probability theory that gives a uniform bound on the difference $F_n-N$, where $N$ is our desired normal distribution, in terms of the $\widehat{F}_n$. This is the approach we employ.\\ \indent  
These Fourier transforms are given by
\begin{equation} \label{eq:4.1}
 \widehat{F}_n(t)  = \frac{1}{\# W_n} \sum_{g \in W_n} e^{it\varphi(g)n^{-1/2}}.
\end{equation}

\begin{lemma}
We have that, for the $\epsilon$ given in Lemma $6.1$,
\begin{equation}\label{eq:1}
 \widehat{F}_n(t) =  \frac{\sum_{i=1}^m L^n_{C_i,itfn^{-1/2}}\chi(\dot{0})}{\sum_{i=1}^m L^n_{C_i,0}\chi(\dot{0})} +o(1),
\end{equation}
when $|t| < \epsilon \sqrt{n}$. The above error term is uniform in $|t|<\epsilon \sqrt{n}$.
\end{lemma}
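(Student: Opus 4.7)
The plan is to apply Lemma \ref{express} twice: once with the complex parameter $s = it/\sqrt{n}$ and once with $s = 0$. The hypothesis $|t| < \epsilon \sqrt{n}$ is exactly what is needed to guarantee $|s| < \epsilon$, so Lemma \ref{express} gives
\begin{equation*}
\sum_{g \in W_n} e^{it \varphi(g) n^{-1/2}} = \sum_{i=1}^m L^n_{C_i, itfn^{-1/2}} \chi(\dot{0}) + O\bigl(e^{n(h-\delta)}\bigr)
\end{equation*}
and, at $s=0$,
\begin{equation*}
\# W_n = \sum_{i=1}^m L^n_{C_i, 0} \chi(\dot{0}) + O\bigl(e^{n(h-\delta)}\bigr),
\end{equation*}
with implied constants independent of $t$ in the stated range.

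Write $A_n(t) = \sum_{i=1}^m L^n_{C_i, itfn^{-1/2}} \chi(\dot{0})$ and $B_n = \sum_{i=1}^m L^n_{C_i, 0} \chi(\dot{0})$. Proposition 2.7 gives $\# W_n = \Theta(e^{hn})$, so the second identity forces $B_n = \Theta(e^{hn})$ and
\begin{equation*}
\frac{1}{\# W_n} = \frac{1}{B_n}\bigl(1 + O(e^{-n\delta})\bigr).
\end{equation*}
Since $|e^{it\varphi(g)n^{-1/2}}| = 1$, the first identity above also yields $|A_n(t)| \le \# W_n + O(e^{n(h-\delta)}) = O(e^{hn})$, uniformly in $|t| < \epsilon \sqrt{n}$. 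In particular $A_n(t)/B_n$ is bounded uniformly in this range.

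Dividing the first identity by $\# W_n$ and substituting the expansion of $1/\# W_n$,
\begin{equation*}
\widehat{F}_n(t) = \frac{A_n(t)}{B_n}\bigl(1 + O(e^{-n\delta})\bigr) + \frac{O(e^{n(h-\delta)})}{B_n}\bigl(1 + O(e^{-n\delta})\bigr) = \frac{A_n(t)}{B_n} + O(e^{-n\delta}),
\end{equation*}
which implies the claim, since $O(e^{-n\delta}) = o(1)$. The only subtlety is verifying that the additive error terms from Lemma \ref{express} are absorbed correctly after division, and the uniform boundedness of $A_n(t)/B_n$ is exactly what ensures this; all of the spectral content has already been packaged into Lemma \ref{express}, so no substantive obstacle remains.
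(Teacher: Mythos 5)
Your proof is correct and follows essentially the same route as the paper: apply Lemma \ref{express} at $s = itn^{-1/2}$ and at $s=0$, use Proposition 2.7 to see that the denominator is $\Theta(e^{hn})$, and divide. Your write-up just spells out the uniform bound $|A_n(t)| = O(e^{hn})$ and the absorption of the error terms more explicitly than the paper does, which is fine.
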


\begin{proof}
Setting $s=itn^{-1/2}$ in Lemma $6.1$ allows us to rewrite expression $(8.1)$ as 
$$\widehat{F}_n(t) = \frac{1}{\# W_n} \sum_{i=1}^m L^n_{C_i,itfn^{-1/2}}\chi(\dot{0}) +o(1).$$
Similarly, by setting $s=0$ in Lemma $6.1$, we have that
$$\#W_n \sim \sum_{i=1}^m L^n_{C_i,0}\chi(\dot{0}).$$
Combining these two identities proves the lemma.
\end{proof}

To obtain our central limit theorem with Berry-Esseen error term we want to make use of an inequality similar to the well-known `Basic Inequality' (see \cite{wiley} and the appendix). As mentioned above, this inequality allows us to study the convergence rate of our central limit theorem via the Fourier transforms of our distributions. The standard `Basic Inequality' applies to distributions with zero mean and for our purposes, we need a version of the inequality that applies to a sequence of distributions with varying means. We therefore amend the Basic Inequality to the following form. A proof is provided in the appendix.

\begin{proposition}
Let $H_n$ for $n\in \mathbb{Z}_{\ge0}$ be a sequence of distributions with Fourier transforms $\widehat{H}_n$ and means $E_n$. Write $N$ for the normal distribution with mean zero and variance $\sigma^2>0$ and suppose that $H_n - N$ vanishes at $\pm \infty$ for each $n \in \mathbb{Z}_{\ge0}$. Suppose there exists a sequence of positive real numbers $T_n >0$ and a constant $C>0$ such that 
$$\int_{-T_n}^{T_n} |\widehat{H}_n(t)| \ dt \le C,$$
for all $n\in \mathbb{Z}_{\ge0}.$ Then, there exists $K\ge0$ such that
\begin{equation}
\|H_n - N\|_{\infty} \le K \left( \int_{-T_n}^{T_n} \frac{1}{|t|} |\widehat{H}_n(t) - e^{-\sigma^2t^2/2}| \ dt + \frac{1}{T_n} + |E_n|e^{|E_n T_n|} \right),
\end{equation}
for all $n\in \mathbb{Z}_{\ge 0}$.
\end{proposition}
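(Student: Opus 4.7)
The plan is to reduce to the classical Basic Inequality (as in Feller's textbook, cited as \cite{wiley}), which states that for a nondecreasing function $F$ and a function $G$ of bounded variation with $|G'|\le m$ and $F - G$ vanishing at $\pm\infty$,
\[
\|F - G\|_\infty \le \frac{1}{\pi}\int_{-T}^{T}\frac{|\hat F(t) - \hat G(t)|}{|t|}\,dt + \frac{24 m}{\pi T}.
\]
This cannot be applied directly to $(H_n,N)$ since $\hat H_n(t) - e^{-\sigma^2 t^2/2} = it E_n + O(t^2)$ near $0$, so the first-order mismatch coming from the non-zero mean $E_n$ is not accounted for by the quoted bound. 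The strategy is to introduce a comparison distribution with mean $E_n$, apply the classical statement to that pair, and then pay for the additional shift separately.

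Set $N_{E_n}(x) := N(x - E_n)$, a distribution function with the same density bound $1/(\sqrt{2\pi}\sigma)$ as $N$ and Fourier transform $e^{itE_n}e^{-\sigma^2 t^2/2}$. Applying the Basic Inequality to $(H_n,N_{E_n})$ with $T = T_n$ yields
\[
\|H_n - N_{E_n}\|_\infty \le K'\bigg(\int_{-T_n}^{T_n}\frac{|\hat H_n(t) - e^{itE_n}e^{-\sigma^2 t^2/2}|}{|t|}\,dt + \frac{1}{T_n}\bigg),
\]
where the hypothesis $\int_{-T_n}^{T_n}|\hat H_n|\,dt \le C$ guarantees that the integrand is integrable near $0$ (using $\hat H_n(0) - \hat N_{E_n}(0) = 0$) and controls the smoothing step of the proof. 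A triangle inequality gives $\|H_n - N\|_\infty \le \|H_n - N_{E_n}\|_\infty + \|N_{E_n} - N\|_\infty$, and the mean value theorem bounds the second term by $|E_n|/(\sqrt{2\pi}\sigma)$, which is absorbed into the $|E_n|e^{|E_nT_n|}$ contribution since $e^{|E_nT_n|}\ge 1$.

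Finally, relate the Fourier integrand to $|\hat H_n(t) - e^{-\sigma^2 t^2/2}|$ via the decomposition
\[
\hat H_n(t) - e^{itE_n}e^{-\sigma^2 t^2/2} = \bigl(\hat H_n(t) - e^{-\sigma^2 t^2/2}\bigr) - (e^{itE_n} - 1)e^{-\sigma^2 t^2/2}.
\]
The elementary bound $|e^{z} - 1| \le |z|e^{|z|}$ applied at $z = itE_n$ gives $|e^{itE_n} - 1| \le |tE_n|e^{|E_nT_n|}$ on $[-T_n,T_n]$, so dividing by $|t|$ shows that the cross term contributes at most $|E_n|e^{|E_nT_n|}\int_{-T_n}^{T_n}e^{-\sigma^2 t^2/2}\,dt = O(|E_n|e^{|E_nT_n|})$, yielding the announced inequality once the constants are collected. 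The main obstacle is verifying that the classical Basic Inequality applies with the translated target $N_{E_n}$ as comparison function and that the integrability hypothesis on $|\hat H_n|$ is enough to support the Parseval/inversion step of its proof; once this is in place, the rest is bookkeeping.
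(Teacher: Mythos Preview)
Your argument is correct and follows the same template as the paper's: recenter so that the two distributions share a mean, apply the classical Basic Inequality, and control the cross term via $|e^{z}-1|\le |z|e^{|z|}$. The only difference is cosmetic but worth noting: the paper shifts $H_n$ (setting $F_n(x)=H_n(x+E_n)$ and comparing to $N$), whereas you shift $N$ (comparing $H_n$ to $N_{E_n}$). Because of this, the cross term in the paper's decomposition is $(e^{-itE_n}-1)\widehat H_n(t)$, and bounding its integral is precisely where the hypothesis $\int_{-T_n}^{T_n}|\widehat H_n|\le C$ is used. In your version the cross term is $(e^{itE_n}-1)e^{-\sigma^2 t^2/2}$, whose integral is controlled by the Gaussian alone---so in fact your route does \emph{not} need that hypothesis at all, contrary to your remark about it supporting integrability or the smoothing step (neither of which actually requires it). Otherwise the two proofs are interchangeable.
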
 

We could apply this result directly to our distributions $F_n$, however, the error term in expression $(8.2)$ would lead to complications when comparing $\widehat{F}_n$ to $e^{-\sigma^2t^2/2}$ in the right hand side of $(8.3)$. Ideally, if we are to apply Proposition $8.2$ to a sequence of distributions $H_n$, we would like an exact expression for each $\widehat{H}_n$ in terms of transfer operators. To achieve this, we consider, instead of $F_n$, the following sequence of distributions,
\begin{align*}
H_n(x) =  \frac{1}{\# W_n + (m-1)\#N_n} \bigg( \# &\left\{ g \in W_n : \frac{\varphi(g)}{\sqrt{n}} \le x \right\}\\
&\hspace{0.7cm} + (m-1) \# \left\{ g \in N_n : \frac{\varphi(g)}{\sqrt{n}} \le x \right\} \bigg), 
\end{align*}
where $$\hspace{-4cm} N_n = \{g \in W_n : \text{ the path in $\mathcal{G}$ corresponding}$$
 $$\hspace{5.5cm} \text{to $g$ does not enter a maximal component}\}$$
and $\mathcal{G}$ has $m$ maximal components. \\ \indent
Since $\# N_n = O\left(e^{n(h-\delta)}\right)$ for some $\delta >0$, $\|F_n - H_n\|_{\infty}$ converges to zero exponentially quickly. Hence, to prove Theorem \ref{CLT}, it suffices to show the following.
\begin{proposition} We have that
$$H_n(x) = \frac{1}{\sqrt{2\pi} \sigma_\varphi} \int_{-\infty}^x e^{-t^2/2\sigma_\varphi^2} \ dt +  O\left(\frac{1}{\sqrt{n}}\right),$$
where the implied constant is independent of $x \in \mathbb{R}$ and $n \in \mathbb{Z}_{\ge 0}$.
\end{proposition}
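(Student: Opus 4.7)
The choice of $H_n$ is engineered so that its Fourier transform admits an \emph{exact} transfer-operator expression (no $o(1)$ remainder, unlike $\widehat{F}_n$). Indeed, since the maximal components are disjoint (Lemma 2.10), each $g \in W_n$ either enters exactly one maximal component $B_i$, in which case it contributes once to $L^n_{C_i, sf}\chi(\dot{0})$ and not at all to $L^n_{C_j, sf}\chi(\dot{0})$ for $j \neq i$, or avoids all of them, in which case it contributes to every $L^n_{C_i, sf}\chi(\dot{0})$. Hence
$$\sum_{i=1}^m L^n_{C_i, sf}\chi(\dot{0}) = \sum_{g \in W_n} e^{s\varphi(g)} + (m-1) \sum_{g \in N_n} e^{s\varphi(g)},$$
and setting $s=0$ gives the analogous count identity, so
$$\widehat{H}_n(t) = \frac{\sum_{i=1}^m L^n_{C_i, itn^{-1/2}f}\chi(\dot{0})}{\sum_{i=1}^m L^n_{C_i, 0}\chi(\dot{0})}.$$

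I then apply Proposition 8.2 with $T_n = c\sqrt n$ for a small $c$ (smaller than the $\epsilon$ of Proposition 4.6), which keeps $s = it/\sqrt n$ in the spectral perturbation window throughout $|t| \leq T_n$. The hypotheses to verify are: (a) the mean $E_n$ of $H_n$ satisfies $|E_n| = O(1/\sqrt n)$; (b) $\int_{-T_n}^{T_n} |\widehat{H}_n(t)|\,dt$ is bounded uniformly in $n$; and (c) $\int_{-T_n}^{T_n} |t|^{-1}|\widehat{H}_n(t) - e^{-\sigma_\varphi^2 t^2/2}|\,dt = O(1/\sqrt n)$. Assuming these, Proposition 8.2 produces error terms $O(1/\sqrt n)$, $1/T_n = O(1/\sqrt n)$, and $|E_n|e^{|E_n T_n|} = O(1/\sqrt n)$, giving the proposition. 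Claim (a) is immediate from Theorem \ref{averaging} together with the standing assumption $\Lambda_\varphi = 0$, since it forces $\sum_{g\in W_n} \varphi(g) = O(\#W_n)$ (and $N_n$ contributes negligibly).

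The heart of the argument is (b) and (c). From the spectral decomposition of Proposition 4.6, uniformly in $|s| < \epsilon$,
$$L^n_{C_i, sf}\chi(\dot{0}) = \sum_{k=0}^{p_i-1} e^{2\pi i nk/p_i} e^{nP_i(sf)} Q_{i,k}(s)\chi(\dot{0}) + O(e^{n(h-\delta)}),$$
and Lemma 4.9 gives $P_i(sf) = h + \sigma_\varphi^2 s^2/2 + O(s^3)$. Substituting $s = it/\sqrt n$ yields
$$e^{nP_i(itn^{-1/2}f)} = e^{nh} e^{-\sigma_\varphi^2 t^2/2}\bigl(1 + O(|t|^3/\sqrt n)\bigr),$$
valid while $|t|^3/\sqrt n$ remains small. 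Writing $\Psi_n(s) = \sum_{i,k} e^{2\pi i nk/p_i} Q_{i,k}(s)\chi(\dot{0})$, the denominator equals $e^{nh}\Psi_n(0)(1 + O(e^{-\delta n}))$, and analyticity of the $Q_{i,k}$ at $0$ gives $\Psi_n(it/\sqrt n)/\Psi_n(0) = 1 + O(|t|/\sqrt n)$. Consequently, for $|t| \leq n^{1/6}$,
$$\widehat{H}_n(t) = e^{-\sigma_\varphi^2 t^2/2}\bigl(1 + O(|t|/\sqrt n) + O(|t|^3/\sqrt n)\bigr) + O(e^{-\delta n}).$$
For $n^{1/6} \leq |t| \leq T_n$, where Taylor expansion fails, I use that $|e^{nP_i(itn^{-1/2}f)}| \leq e^{nh} e^{-c' t^2}$ when $|t|/\sqrt n$ remains in the regime where the quadratic term dominates, and decays exponentially faster than $e^{nh}$ when $|t|/\sqrt n$ is bounded away from $0$ (by continuity and the strict maximum of $|e^{P_i(is)}|$ at $s=0$). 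This Gaussian-type domination of $|\widehat{H}_n(t)|$ throughout $|t| \leq T_n$ gives (b) immediately, and (c) follows by splitting the integrand at $|t| = n^{1/6}$: the small-$|t|$ piece contributes $O(1/\sqrt n) \int |t|^2 e^{-\sigma_\varphi^2 t^2/2}\,dt = O(1/\sqrt n)$, while the large-$|t|$ piece is exponentially small.

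The main obstacle is the oscillating normalising factor $\Psi_n(0)$, which need not stabilise but is periodic in $n$ modulo $\mathrm{lcm}(p_1, \ldots, p_m)$. Without uniform two-sided control on $\Psi_n(0)$, the ratio defining $\widehat{H}_n$ could misbehave along subsequences. However, Proposition 2.7 forces $\Psi_n(0) = \Theta(1)$ uniformly in $n$ (via the identity $\#W_n + (m-1)\#N_n = e^{nh}\Psi_n(0) + O(e^{n(h-\delta)})$ combined with $\#W_n = \Theta(e^{nh})$ and $\#N_n = O(e^{n(h-\delta)})$), so all estimates go through uniformly across residue classes of $n$.
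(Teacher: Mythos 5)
Your proposal follows the paper's route essentially step for step: the exact transfer-operator formula for $\widehat{H}_n$ (the paper's Lemma 8.4), the modified Basic Inequality (Proposition 8.2) with $T_n$ of order $\sqrt{n}$, the mean bound via Theorem 1.1, and the spectral decomposition together with the Taylor expansion of the pressure for the two integral estimates. The one genuine gap is your treatment of the non-dominant part of the spectrum inside the weighted integral (c). You absorb the terms $L^n_{C_i,itfn^{-1/2}}(I-Q_i(itn^{-1/2}))\chi(\dot{0})$ into a single error ``$+\,O(e^{-\delta n})$'' in your expansion of $\widehat{H}_n(t)$, uniform in $t$, and then assert that the small-$|t|$ piece of (c) is $O(1/\sqrt{n})$. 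But the integrand in (c) carries the weight $1/|t|$, and $\int_{|t|\le n^{1/6}} |t|^{-1} e^{-\delta n}\, dt$ diverges: a remainder bound that is merely uniform in $t$ gives no control of the integrand near $t=0$, so as written claim (c) does not close.

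What is needed is that the quantity $L^n_{C_i,itfn^{-1/2}}(I-Q_i(itn^{-1/2}))\chi(\dot{0}) - e^{-\sigma_\varphi^2 t^2/2}L^n_{C_i,0}(I-Q_i(0))\chi(\dot{0})$ vanishes at least linearly in $t$, with a constant that still decays relative to $e^{nh}$. The factor $1-e^{-\sigma_\varphi^2 t^2/2}=O(t^2)$ disposes of part of this, but the difference $L^n_{C_i,itfn^{-1/2}}(I-Q_i(itn^{-1/2})) - L^n_{C_i,0}(I-Q_i(0))$ requires a derivative estimate in $t$: this is exactly the paper's Lemma 8.9, where an operator mean value theorem combined with the Leibniz rule gives, for $L_s := e^{-h}L_{C_i,sf}(I-Q_i(s))$ with $\|L^n_s\|\le K\rho^n$, $\rho<1$, the bound $\|L^n_{itn^{-1/2}}-L^n_0\|\le C|t|\sqrt{n}\,\rho^n$. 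The factor $|t|$ cancels the $1/|t|$ weight and the resulting contribution $O(n\rho^n)$ is negligible; note the naive differentiation of $L^n_s$ produces a factor $n$, so one must genuinely check it is beaten by $\rho^n$ — this is not automatic from ``$O(e^{-\delta n})$ uniformly''. The remaining ingredients of your argument — the exact formula for $\widehat{H}_n$ via the overcounting of $N_n$, claims (a) and (b), the lower bound $\Psi_n(0)=\Theta(1)$, and the splitting at $|t|=n^{1/6}$ for the eigenvalue/projection part (which substitutes a direct computation for the paper's appeal to Coelho--Parry) — are sound; only the remainder term near $t=0$ needs the additional Lipschitz-in-$t$ estimate above.
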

We consider the distributions $H_n$, because each $\widehat{H}_n$ has an exact expression in terms of transfer operators.

\begin{lemma}\label{overcount}
For all $t\in \mathbb{R}$ and $n\in \mathbb{Z}_{\ge 0}$,
$$  \widehat{H}_n(t) =  \frac{\sum_{i=1}^m L^n_{C_i,itfn^{-1/2}}\chi(\dot{0})}{\sum_{i=1}^m L^n_{C_i,0}\chi(\dot{0})}.$$
\end{lemma}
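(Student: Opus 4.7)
The plan is to compute $\widehat{H}_n(t)$ directly from the definition of $H_n$ and reduce the claim to a combinatorial identity relating $\sum_{i=1}^m L_{C_i,sf}^n \chi(\dot{0})$ to a suitably weighted sum over $W_n$. Linearity of the Fourier transform gives
$$\widehat{H}_n(t) = \frac{1}{\#W_n + (m-1)\#N_n}\left(\sum_{g \in W_n} e^{it\varphi(g)/\sqrt{n}} + (m-1)\sum_{g \in N_n} e^{it\varphi(g)/\sqrt{n}}\right),$$
so it suffices to prove, for every complex $s$, the identity
$$\sum_{i=1}^m L_{C_i,sf}^n \chi(\dot{0}) = \sum_{g \in W_n} e^{s\varphi(g)} + (m-1)\sum_{g \in N_n} e^{s\varphi(g)}.$$
The denominator in the statement is then obtained by specializing to $s=0$ and the numerator by specializing to $s=it/\sqrt{n}$.

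To prove the identity, I would first unfold the operator. Iterating Definition 4.1 and using the fact that $\chi$ is the indicator of $\{z : z_0=\ast\}$ yields
$$L_{C_i,sf}^n \chi(\dot{0}) = \sum_{z} e^{sf^n(z)},$$
where the sum runs over $z \in \Sigma_{C_i}$ satisfying $z_0=\ast$, $\sigma^n(z)=\dot{0}$, and $z_{n-1}\neq 0$. The last condition arises because $0$ is absorbing in $\mathcal G$, so the requirement that $\sigma^k(z)\neq \dot{0}$ for all $0 \le k < n$ (coming from the exclusion in Definition 4.1) collapses to $z_{n-1}\neq 0$. By the strongly Markov bijection in Definition 2.5 together with Condition (1), such $z$ correspond precisely to those $g \in W_n$ whose associated path in $\mathcal G$ uses only vertices permitted by $C_i$, i.e. avoids every maximal component other than $B_i$, and each contributes $e^{s\varphi(g)}$.

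Next, summing over $i$, Lemma 2.10 (disjointness of maximal components) implies that the path corresponding to any $g \in W_n$ enters at most one maximal component. Hence each $g \in W_n \setminus N_n$ is counted exactly once, by the unique $i$ whose component its path enters, while each $g \in N_n$ is counted by all $m$ values of $i$. Collecting,
$$\sum_{i=1}^m L_{C_i,sf}^n \chi(\dot{0}) = \sum_{g \in W_n\setminus N_n} e^{s\varphi(g)} + m\sum_{g \in N_n} e^{s\varphi(g)} = \sum_{g \in W_n} e^{s\varphi(g)} + (m-1)\sum_{g \in N_n} e^{s\varphi(g)},$$
which is the desired identity.

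The only delicate point is the bookkeeping around the sink vertex $0$ and the tail condition that ensures words of length exactly $n$ rather than shorter; no spectral theory is required. Indeed, this is precisely why the overcounted distribution $H_n$ (rather than $F_n$) was introduced: the factor $(m-1)\#N_n$ is exactly what is needed to absorb the overcounting coming from paths that avoid every maximal component, so that the Fourier transform matches the operator sum on the nose with no remainder term.
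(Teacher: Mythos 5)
Your proof is correct and follows essentially the same route as the paper: the paper's own proof simply asserts the overcounting identity $\sum_{g \in W_n} e^{s\varphi(g)} + (m-1)\sum_{g \in N_n} e^{s\varphi(g)} = \sum_{i=1}^m L^n_{C_i,sf}\chi(\dot{0})$ and divides, exactly as you do, with the denominator obtained at $s=0$. The only difference is that you spell out the justification of this identity (unfolding the iterated operator, the strongly Markov bijection with Condition (1), and the disjointness of maximal components from Lemma 2.10, which gives the count of $1$ for $g \in W_n\setminus N_n$ and $m$ for $g \in N_n$), details the paper leaves implicit.
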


\begin{proof}
We note that, for all $t \in \mathbb{R}$ and $n\in \mathbb{Z}_{\ge 0}$, 
$$\sum_{g \in W_n} e^{it\varphi(g)n^{-1/2}} + (m-1)\sum_{g \in N_n} e^{it\varphi(g)n^{-1/2}} = \sum_{i=1}^m L^n_{C_i,itfn^{-1/2}}\chi(\dot{0}).$$
Using this expression and the same proof as Lemma $8.1$ gives the required result. 
\end{proof}
We want to apply Proposition $8.2$ to the sequence $H_n$ and a suitable sequence $T_n$. Our aim is to show that for any sufficiently small $\epsilon>0$, Proposition $8.2$ holds for the pair $H_n$ and $T_n = \epsilon \sqrt{n}$.

\begin{lemma}
For any fixed sufficiently small $\epsilon >0$, there exists a constant $C >0$ depending only on $\epsilon$ such that
$$\int_{-\epsilon \sqrt{n}}^{\epsilon \sqrt{n}} |\widehat{H}_n(t)| \ dt \le C,$$
for all $n \in\mathbb{Z}_{\ge 0}.$
\end{lemma}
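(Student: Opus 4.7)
The plan is to combine the exact identity for $\widehat{H}_n$ given in Lemma \ref{overcount} with the spectral decomposition of $L^n_{C_i,sf}$ from Proposition $4.6$ and the Taylor expansion of the pressure in Lemma $4.9$. Setting $s = it n^{-1/2}$, for $|t| \le \epsilon \sqrt{n}$ with $\epsilon < \epsilon_0$ (the $\epsilon$ of Proposition $4.6$), we have
$$ L^n_{C_i, itfn^{-1/2}} \chi(\dot{0}) = \sum_{k=0}^{p_i-1} e^{2\pi i n k /p_i} e^{nP_i(itfn^{-1/2})} Q_{i,k}(itn^{-1/2})\chi(\dot{0}) + R_{i,n}(t), $$
where $|R_{i,n}(t)| = O(e^{n(h-\delta)})$ uniformly in $|t| \le \epsilon\sqrt{n}$. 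The analyticity of $s \mapsto Q_{i,k}(s)$ on $\{|s|<\epsilon_0\}$ ensures that $|Q_{i,k}(itn^{-1/2})\chi(\dot{0})|$ is bounded by a constant $M$ independent of $n$ and $|t| \le \epsilon\sqrt{n}$.

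Next I would control the modulus of the leading exponential factor. Using $\Lambda_\varphi = 0$ and Lemma $4.9$,
$$ nP_i(itfn^{-1/2}) = nh - \tfrac{1}{2}\sigma_\varphi^2 t^2 + O(|t|^3/\sqrt{n}), $$
with the implicit constant uniform in $i$. For $|t| \le \epsilon\sqrt{n}$ the cubic error is bounded by $C\epsilon^2 t^2$. Choosing $\epsilon$ small enough so that $C\epsilon^2 \le \sigma_\varphi^2 /4$ (this is where we use $\sigma_\varphi^2>0$, guaranteed by the unboundedness hypothesis and Lemma \ref{cohomcond}) gives
$$ \bigl|e^{nP_i(itfn^{-1/2})}\bigr| \le e^{nh} e^{-\sigma_\varphi^2 t^2 /4} $$
for all $|t| \le \epsilon\sqrt{n}$ and all $i = 1,\dots,m$.

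For the denominator, setting $t = 0$ in Lemma \ref{overcount} gives $\sum_{i=1}^m L^n_{C_i,0}\chi(\dot{0}) = \#W_n + (m-1)\#N_n$. Since $\#W_n = \Theta(e^{nh})$ by Proposition $2.7$ and $\#N_n = O(e^{n(h-\delta)})$, this quantity is $\Theta(e^{nh})$. Combining the three estimates yields
$$ |\widehat{H}_n(t)| \le K_1 e^{-\sigma_\varphi^2 t^2 /4} + K_2 e^{-n\delta} $$
for constants $K_1, K_2$ independent of $n$ and $|t| \le \epsilon\sqrt{n}$. Integrating over $[-\epsilon\sqrt{n},\epsilon\sqrt{n}]$, the first summand contributes a bound independent of $n$ since $\int_{\mathbb{R}} e^{-\sigma_\varphi^2 t^2/4}\,dt < \infty$, while the second contributes $O(\sqrt{n}\,e^{-n\delta}) = o(1)$, which is uniformly bounded in $n$. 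This yields the required estimate.

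The main obstacle is the quantitative control of the cubic error term in the pressure expansion: it is essential that the implicit constant in the $O(|t|^3/\sqrt{n})$ term is uniform across all maximal components $B_i$ and, more importantly, that the choice of $\epsilon$ can be made small enough to absorb this cubic error into the Gaussian decay coming from the $\sigma_\varphi^2 t^2 /2$ term, without depending on $n$. All other ingredients (uniform boundedness of the projections, the spectral gap error, and the lower bound on the denominator) are already in place from Sections $4$ and $6$.
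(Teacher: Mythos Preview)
Your proof is correct and follows essentially the same route as the paper's: split $L^n_{C_i,itfn^{-1/2}}\chi(\dot 0)$ via the spectral decomposition, dominate the leading eigenvalue contribution by a Gaussian using the pressure expansion, and control the rest-of-spectrum piece with the uniform spectral gap, then integrate. One minor slip: for $|t|\le \epsilon\sqrt{n}$ the cubic remainder $C|t|^3/\sqrt{n}$ is bounded by $C\epsilon\, t^2$, not $C\epsilon^2 t^2$; the argument is unaffected since you simply choose $\epsilon$ small enough that $C\epsilon \le \sigma_\varphi^2/4$.
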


\begin{proof}
Since $\sum_{i=1}^m L^n_{C_i,0}\chi(\dot{0}) = \Theta(e^{nh})$,
$$\int_{-\epsilon \sqrt{n}}^{\epsilon \sqrt{n}} |\widehat{H}_n(t)| \ dt = O\left(e^{-nh} \int_{-\epsilon \sqrt{n}}^{\epsilon \sqrt{n}} \sum_{i=1}^m |L_{C_i,itfn^{-1/2}}^n \chi(\dot{0})| \ dt\right).$$
Hence it suffices to show that for each $i=1,...,m$, if $\epsilon>0$ is sufficiently small, then
\begin{equation}
e^{-nh} \int_{-\epsilon \sqrt{n}}^{\epsilon \sqrt{n}}|L_{C_i,itfn^{-1/2}}^n \chi(\dot{0})| \ dt = O(1),
\end{equation}
where the implied constant is independent of $n \in \mathbb{Z}_{\ge 0}$.\\
\indent Using the projections $Q_{i,k}$ and $Q_i$ we can write for sufficiently small $\epsilon$,
\begin{align}
L_{C_i,itfn^{-1/2}}^n \chi(\dot{0}) = &\sum_{k=0}^{p_i-1} e^{nP_i(itfn^{-1/2})} e^{2\pi i kn/p_i} Q_{i,k}(itn^{-1/2})\chi(\dot{0}) \nonumber \\ 
&\hspace{1.95cm}+ L_{C_i,itfn^{-1/2}}^n(I-Q_i(itn^{-1/2}))\chi(\dot{0}). 
\end{align}
Substituting this expression into the left hand side of $(8.4)$ implies that to prove $(8.4)$ it suffices to show that
\begin{equation}
e^{-nh} \int_{-\epsilon \sqrt{n}}^{\epsilon \sqrt{n}} \left| e^{nP_i(itfn^{-1/2})}Q_{i,k}(itn^{-1/2})\chi(\dot{0}) \right| dt = O(1)
\end{equation}
and
\begin{equation}
e^{-nh} \int_{-\epsilon \sqrt{n}}^{\epsilon \sqrt{n}} \left| L_{C_i,itfn^{-1/2}}^n(I-Q_i(itn^{-1/2})) \chi(\dot{0}) \right| dt = O(1)
\end{equation}
for each $i=1,...,m$, $k=0,...,p_i-1$ and that these error terms are independent of $n$.\\ \indent
To prove $(8.6)$, note that the Taylor expansion for the pressure $(4.3)$ implies that if $\epsilon$ is sufficiently small, then for all $|t| < \epsilon \sqrt{n}$,
$$|e^{nP_i(itfn^{-1/2}) - nh}| \le e^{-\sigma_\varphi^2t^2/4}.$$
Hence for fixed, sufficiently small $\epsilon$,
$$e^{-nh} \int_{-\epsilon \sqrt{n}}^{\epsilon \sqrt{n}} \left| e^{nP_i(itfn^{-1/2})}Q(itn^{-1/2})\chi(\dot{0}) \right| dt = O\left(\int_{-\epsilon\sqrt{n}}^{\epsilon \sqrt{n}} e^{-\sigma^2t^2/4} dt\right)= O(1).$$

To prove $(8.7)$, recall that, by Proposition $4.6$, if $\epsilon$ is sufficiently small, then for fixed $s$ with $|s|<\epsilon,$ there exists $\delta' >0$ such that
$$L_{C_i,sf}^n(I-Q_i(s)) \chi(\dot{0}) = O\left(e^{n(h-\delta')}\right),$$
where the implied constant is independent of $n \in \mathbb{Z}_{\ge 0}$. Since the maps $s \mapsto L_s$ and $s \mapsto Q_i(s)$ for $i=1,...,m$ are continuous (in fact analytic), at the cost of reducing $\epsilon$, we can find $\delta>0$ and $K>0$ such that
$$L_{C_i,sf}^n(I-Q_i(s)) \chi(\dot{0}) \le Ke^{n(h-\delta )},$$
for all $|s|<\epsilon$ and $n \in \mathbb{Z}_{\ge 0}$.
Hence
$$L_{C_i,itfn^{-1/2}}^n(I-Q_i(itfn^{-1/2})) \chi(\dot{0}) = O\left(e^{n(h-\delta )}\right),$$
where the implied constant is independent of $t$ and $n$ with $|t|<\epsilon \sqrt{n}$. Substituting this expression into the left hand side of $(8.6)$ gives the required decay rate. This concludes the proof.
\end{proof}

We have shown that Proposition $8.2$ applies to the pair $H_n$ and $T_n = \epsilon \sqrt{n}$ as long as $\epsilon >0$ is sufficiently small. The bound $(8.3)$ then provides us with a way of computing the decay rate of $\|H_n - N\|_\infty$, where $N$ is the normal distribution with mean $0$ and variance $\sigma_\varphi^2 >0$. We now turn our attention to the terms in $(8.3)$. We begin by studying the means $E_n$ of the distributions $H_n$. These means are given by
$$\sqrt{n} E_n=\int \varphi(g) \ d\tilde{\mu}_n$$
where
$$\tilde{\mu}_n = \frac{1}{\#W_n + (m-1)\#N_n}  \left(\sum_{g \in W_n} \delta_g + (m-1)\sum_{g \in N_n} \delta_g\right).$$
It follows easily from Theorem \ref{averaging} that $E_n \to 0$ as $n\to \infty$, further, we can quantify the rate of this convergence.

\begin{proposition}
We have that
$$E_n = O\left(\frac{1}{\sqrt{n}}\right).$$
\end{proposition}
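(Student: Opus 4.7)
The plan is to write $\sqrt{n}\,E_n$ as the explicit ratio
\[
\sqrt{n}\,E_n \;=\; \frac{\sum_{g \in W_n} \varphi(g) \;+\; (m-1)\sum_{g \in N_n} \varphi(g)}{\#W_n + (m-1)\#N_n},
\]
and then bound numerator and denominator separately. Since we are working under the standing assumption $\Lambda_\varphi = 0$, I expect the $W_n$-sum in the numerator to be controlled directly by the averaging theorem, and the $N_n$-sum to be negligible because $N_n$ is exponentially small compared to $W_n$.

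First I would apply Theorem \ref{averaging} with $\Lambda_\varphi = 0$ to rewrite
\[
\sum_{g \in W_n} \varphi(g) \;=\; n \#W_n \cdot \left(\Lambda_\varphi + O(1/n)\right) \;=\; O(\#W_n).
\]
Next, using Condition $(2)$ — specifically that $\varphi$ is Lipschitz with respect to the left word metric — one gets $|\varphi(g)| \le |\varphi(e)| + C|g|$, so $|\varphi(g)| = O(n)$ uniformly for $g \in W_n$. Combined with the defining estimate $\#N_n = O(e^{n(h-\delta)})$ for some $\delta > 0$ (since elements of $N_n$ correspond to paths avoiding the maximal components) and Coornaert's estimate $\#W_n = \Theta(e^{nh})$ from Proposition $2.7$, this yields
\[
\left|\sum_{g \in N_n} \varphi(g)\right| \;\le\; Cn\,\#N_n \;=\; O\!\left(n\,e^{n(h-\delta)}\right) \;=\; o(\#W_n).
\]

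For the denominator, the same comparison gives
\[
\#W_n + (m-1)\#N_n \;=\; \#W_n\bigl(1 + O(e^{-n\delta})\bigr) \;=\; \Theta(\#W_n).
\]
Assembling the bounds shows $|\sqrt{n}\,E_n| = O(1)$, hence $E_n = O(n^{-1/2})$. There is no real obstacle here: the argument is essentially a division of the averaging theorem estimate by $\sqrt{n}$, together with the observation that reweighting by the exponentially small correction on $N_n$ cannot affect the average to leading order.
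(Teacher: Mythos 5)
Your argument is correct and is essentially the paper's proof, which simply cites the averaging theorem (Theorem \ref{averaging}, with $\Lambda_\varphi=0$ after the normalisation of Section 6) at this point; you merely spell out the routine details, namely the Lipschitz bound $|\varphi(g)|=O(n)$ on $W_n$ and the exponential smallness of the $N_n$-correction, which the paper leaves implicit. No issues.
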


\begin{proof}
This is a simple application of Theorem \ref{averaging}.
\end{proof}

We now study the decay rate of the first term in the right hand side of $(8.3)$. Our aim is to prove the following.

\begin{proposition}
For any fixed $\epsilon >0$ sufficiently small,
$$\int_{-\epsilon\sqrt{n}}^{\epsilon \sqrt{n}} \frac{1}{|t|} |\widehat{H}_n(t)-e^{-\sigma_\varphi^2t^2/2}| \ dt = O\left(\frac{1}{\sqrt{n}}\right),$$
where the implied constant is independent of $n \in \mathbb{Z}_{\ge 0}$.
\end{proposition}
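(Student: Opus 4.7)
The plan is to combine the exact expression from Lemma \ref{overcount} with the spectral decomposition (8.5), then compare against $e^{-\sigma_\varphi^2 t^2/2}$ via the Taylor expansion of pressure. Write
$$\widehat{H}_n(t) = \frac{B_n(t) + R_n(t)}{B_n(0) + R_n(0)}, \quad R_n(t) := \sum_{i=1}^m L_{C_i,itfn^{-1/2}}^n(I-Q_i(itn^{-1/2}))\chi(\dot 0),$$
where
$$B_n(t) := \sum_{i=1}^m \sum_{k=0}^{p_i-1} e^{2\pi i k n/p_i}\, e^{n P_i(itfn^{-1/2})}\, Q_{i,k}(itn^{-1/2})\chi(\dot 0).$$
From the proof of Lemma 8.5, $R_n(t) = O(e^{n(h-\delta)})$ uniformly in $|t| \le \epsilon \sqrt n$, for some $\delta > 0$.

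Next I would use Lemma \ref{eq:3} with $\Lambda_\varphi = 0$: for $s = itn^{-1/2}$ with $|t| \le \epsilon\sqrt n$,
$$nP_i(itfn^{-1/2}) - nh = -\tfrac{1}{2}\sigma_\varphi^2 t^2 + O(|t|^3 n^{-1/2}).$$
Taking $\epsilon$ small enough so that the cubic error in the exponent is absorbed by the quadratic, and applying $|e^w-1|\le |w|e^{|w|}$, gives
$$\bigl|e^{nP_i(itfn^{-1/2})} - e^{nh}e^{-\sigma_\varphi^2 t^2/2}\bigr| \le C\, e^{nh}\, \frac{|t|^3}{\sqrt n}\, e^{-\sigma_\varphi^2 t^2/4}.$$
Analyticity of $s\mapsto Q_{i,k}(s)$ near $0$ then yields $\|Q_{i,k}(itn^{-1/2})\chi(\dot 0) - Q_{i,k}(0)\chi(\dot 0)\|_\infty = O(|t|n^{-1/2})$. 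Setting $A_n := \sum_{i,k} e^{2\pi i k n/p_i} Q_{i,k}(0)\chi(\dot 0)$, combining these two estimates and telescoping inside the sum defining $B_n(t)$ gives
$$\bigl|B_n(t) - e^{nh}e^{-\sigma_\varphi^2 t^2/2} A_n\bigr| \le C\, e^{nh}\left(\frac{|t|^3}{\sqrt n} e^{-\sigma_\varphi^2 t^2/4} + \frac{|t|}{\sqrt n} e^{-\sigma_\varphi^2 t^2/2}\right).$$

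To close the argument I need a uniform lower bound on the denominator. Since $B_n(0) + R_n(0) = \#W_n + (m-1)\#N_n \ge \#W_n \ge C_1 e^{nh}$ by Proposition 2.7, and $R_n(0) = O(e^{n(h-\delta)})$, we get $|A_n| \ge C_1/2$ for all sufficiently large $n$. Dividing numerator and denominator by $e^{nh}A_n$, I would deduce
$$\bigl|\widehat{H}_n(t) - e^{-\sigma_\varphi^2 t^2/2}\bigr| \le \frac{C'}{\sqrt n}\left(|t|^3 e^{-\sigma_\varphi^2 t^2/4} + |t| e^{-\sigma_\varphi^2 t^2/2}\right) + O(e^{-n\delta'})$$
for $|t| \le \epsilon \sqrt n$. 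Dividing by $|t|$ and integrating, the Gaussian moments $\int t^2 e^{-\sigma_\varphi^2 t^2/4}\,dt$ and $\int e^{-\sigma_\varphi^2 t^2/2}\,dt$ are finite, while the exponentially small term integrated over $[-\epsilon\sqrt n,\epsilon\sqrt n]$ contributes $O(e^{-n\delta'/2})$. The total is $O(n^{-1/2})$.

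The main obstacle is the bookkeeping of the periodic phases $e^{2\pi ikn/p_i}$: the quantity $A_n$ varies with $n$ modulo $\mathrm{lcm}(p_i)$, so the same $A_n$ must appear in the "correct" places in both the numerator and denominator for the Gaussian to emerge cleanly. Verifying the uniform lower bound $|A_n|\ge c>0$ via the counting estimate of Proposition 2.7 is the crucial step that makes the argument robust across residue classes.
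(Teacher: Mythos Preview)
There is a genuine gap in how you handle the remainder term $R_n(t)$. Your final pointwise bound reads
\[
\bigl|\widehat{H}_n(t) - e^{-\sigma_\varphi^2 t^2/2}\bigr| \le \frac{C'}{\sqrt n}\left(|t|^3 e^{-\sigma_\varphi^2 t^2/4} + |t| e^{-\sigma_\varphi^2 t^2/2}\right) + O(e^{-n\delta'}),
\]
but after dividing by $|t|$ the last term becomes $O(e^{-n\delta'})/|t|$, which is \emph{not} integrable near $t=0$; your claim that it integrates to $O(e^{-n\delta'/2})$ is false. The uniform estimate $R_n(t)=O(e^{n(h-\delta)})$ from Lemma~8.5 is too crude here: it does not vanish at $t=0$, so it cannot absorb the $1/|t|$ weight.

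What is needed is a bound on the \emph{difference} $R_n(t)-R_n(0)$ (or equivalently on $R_n(t)-e^{-\sigma_\varphi^2 t^2/2}R_n(0)$) that carries an explicit factor of $|t|$. This is exactly the content of the paper's Lemma~8.9: writing $L_s = e^{-h}L_{C_i,sf}(I-Q_i(s))$ and applying an operator-valued mean value theorem together with the Leibniz rule for $D(L_t^n)$, one obtains
\[
\bigl\|L_{itn^{-1/2}}^n - L_0^n\bigr\| \le C\,|t|\,\sqrt n\,\rho^n
\]
for some $0<\rho<1$. Dividing by $|t|$ and integrating over $[-\epsilon\sqrt n,\epsilon\sqrt n]$ then gives $O(n\rho^n)$, which is harmless. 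The remaining piece $(1-e^{-\sigma_\varphi^2 t^2/2})R_n(0)$ already carries a factor $O(t^2)$ from $|e^z-1|\le |z|e^{|z|}$ and causes no trouble. Once you insert this step, the rest of your decomposition (which is essentially the paper's, just organised as a single fraction rather than component-by-component) goes through; the bookkeeping with $A_n$ and the lower bound via Proposition~2.7 is correct and matches what the paper does implicitly.
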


We will break the proof of this proposition into two lemmas. We begin by studying the following difference 
$$\widehat{H}_n(t) - e^{-\sigma_\varphi^2t^2/2} = \frac{\sum_{i=1}^m \left( L_{C_i,itn^{-1/2}f}^n \chi(\dot{0}) - e^{-\sigma_\varphi^2t^2/2} L_{C_i,0}^n \chi(\dot{0})\right)}{\sum_{i=1}^m L_{C_i,0}^n \chi(\dot{0})}.$$
By Proposition $2.7$ we can write
$$\left|\widehat{H}_n(t) - e^{-\sigma_\varphi^2t^2/2} \right| \le C e^{-nh} \sum_{i=1}^m \left| L_{C_i,itn^{-1/2}f}^n \chi(\dot{0}) - e^{-\sigma_\varphi^2t^2/2} L_{C_i,0}^n \chi(\dot{0}) \right|,$$
where $C>0$ is a constant independent of $n \in \mathbb{Z}_{\ge 0}$. Hence to prove Proposition $8.7$ it suffices to show that for each $i=1,...,m$, if $\epsilon >0$ is sufficiently small,
\begin{equation}
\int_{-\epsilon\sqrt{n}}^{\epsilon \sqrt{n}} \frac{1}{|t|} \left| L_{C_i,itn^{-1/2}f}^n \chi(\dot{0}) - e^{-\sigma_\varphi^2t^2/2} L_{C_i,0}^n \chi(\dot{0}) \right| \ dt = O\left(\frac{1}{\sqrt{n}}\right).
\end{equation}
Substituting $(8.5)$ into $(8.8)$ we obtain (assuming that $\epsilon$ is sufficiently small),
$$e^{-nh} \int_{-\epsilon \sqrt{n}}^{\epsilon \sqrt{n}} \frac{1}{|t|} \left| L_{C_i,itn^{-1/2}f} \chi(\dot{0}) - e^{-\sigma_\varphi^2t^2/2}L_{C_i,0}^n\chi(\dot{0})\right| dt \le \text{I}(\epsilon)_n^i + \text{II}(\epsilon)_n^i,$$
where $\text{I}(\epsilon)_n^i $, $\text{II} (\epsilon)_n^i$ are given by

$$\sum_{k=0}^{p_i -1} e^{-nh}\int_{-\epsilon \sqrt{n}}^{\epsilon \sqrt{n}} \frac{1}{|t|} \left| e^{nP_i(itfn^{-1/2})}Q_{i,k}(itn^{-1/2})\chi(\dot{0}) - e^{-\sigma_\varphi^2t^2/2 + nh} Q_{i,k}(0)\chi(\dot{0})\right| dt,$$
$$e^{-nh}\hspace{-1mm}\int_{-\epsilon \sqrt{n}}^{\epsilon \sqrt{n}} \frac{1}{|t|} \left| L_{C_i,itn^{-1/2}f}^n(I\hspace{-2pt}-\hspace{-2pt}Q_i(itn^{-1/2}))\chi(\dot{0}) - e^{-\sigma_\varphi^2t^2/2 }L_{C_i,0}^n (I\hspace{-2pt}-\hspace{-2pt}Q_i(0))\chi(\dot{0})\right| dt$$
respectively. We have therefore shown that to prove Proposition $8.7$, it suffices to show that $\text{I}(\epsilon)_n^i$ and $\text{II}(\epsilon)_n^i$ decay at a $n^{-1/2}$ rate. The next two lemmas prove this.

\begin{lemma}
For any fixed sufficiently small $\epsilon>0$, 
$$\textnormal{I}(\epsilon)_n^i = O\left(\frac{1}{\sqrt{n}}\right).$$
\end{lemma}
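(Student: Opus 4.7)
The plan is to bound the integrand by decomposing the difference inside the absolute value into two manageable pieces. Specifically, I would write
\begin{align*}
&e^{nP_i(itfn^{-1/2})}Q_{i,k}(itn^{-1/2})\chi(\dot{0}) - e^{nh-\sigma_\varphi^2 t^2/2}Q_{i,k}(0)\chi(\dot{0}) \\
&\quad = e^{nh}\bigl(e^{nP_i(itfn^{-1/2})-nh} - e^{-\sigma_\varphi^2 t^2/2}\bigr) Q_{i,k}(itn^{-1/2})\chi(\dot{0}) \\
&\qquad + e^{nh-\sigma_\varphi^2 t^2/2}\bigl(Q_{i,k}(itn^{-1/2}) - Q_{i,k}(0)\bigr)\chi(\dot{0}),
\end{align*}
so that the triangle inequality reduces the problem to two integral estimates, one controlled by the analyticity of the eigenprojection $Q_{i,k}$, the other by the Taylor expansion of pressure from Lemma 4.10.

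For the projection piece, analyticity of $s \mapsto Q_{i,k}(s)$ near $s=0$ combined with the fact that $\chi(\dot{0})$ is a fixed function yields a Lipschitz bound $|Q_{i,k}(itn^{-1/2})\chi(\dot{0}) - Q_{i,k}(0)\chi(\dot{0})| \le C|t|n^{-1/2}$ uniformly for $|t| \le \epsilon\sqrt{n}$, provided $\epsilon$ is small enough that $itn^{-1/2}$ stays in the domain of analyticity. Integrating $|t|^{-1}\cdot e^{-\sigma_\varphi^2 t^2/2} \cdot C|t|n^{-1/2}$ against the Gaussian over $\mathbb{R}$ then gives a contribution of $O(n^{-1/2})$, which is harmless.

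The main obstacle is the pressure piece, because the expansion $nP_i(itfn^{-1/2}) - nh = -\sigma_\varphi^2 t^2/2 + O(|t|^3/\sqrt{n})$ (using $\Lambda_\varphi = 0$ from the assumption in Section 6) has a remainder that grows without bound as $|t| \to \epsilon \sqrt{n}$. To deal with this, I would split the range of integration at $|t| = n^{1/6}$. On the inner range $|t| \le n^{1/6}$ the cubic error $|t|^3/\sqrt{n}$ is bounded, so the elementary estimate $|e^{\xi}-1| \le C|\xi|$ for $|\xi|$ bounded gives
\[
\bigl| e^{nP_i(itfn^{-1/2})-nh} - e^{-\sigma_\varphi^2 t^2/2}\bigr| \le C e^{-\sigma_\varphi^2 t^2/2}\,\frac{|t|^3}{\sqrt{n}},
\]
and the resulting integral $Cn^{-1/2}\int |t|^2 e^{-\sigma_\varphi^2 t^2/2}\,dt$ is $O(n^{-1/2})$. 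On the outer range $n^{1/6} \le |t| \le \epsilon\sqrt{n}$, choosing $\epsilon$ small enough absorbs the cubic remainder into the leading quadratic so that both $|e^{nP_i(itfn^{-1/2})-nh}|$ and $e^{-\sigma_\varphi^2 t^2/2}$ are bounded by $e^{-\sigma_\varphi^2 t^2/4} \le e^{-\sigma_\varphi^2 n^{1/3}/4}$; the integrand is then exponentially small in $n$, dwarfing the $n^{-1/2}$ target. Summing the $p_i$ values of $k$ yields the desired bound for $\mathrm{I}(\epsilon)_n^i$.
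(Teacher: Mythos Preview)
Your proof is correct and follows essentially the same route as the paper: a triangle-inequality split into a projection piece (handled via the analyticity of $Q_{i,k}$) and a pressure piece (handled via the Taylor expansion of $P_i$). The only differences are cosmetic---your telescoping pairs the pressure difference with $Q_{i,k}(itn^{-1/2})$ whereas the paper pairs it with $Q_{i,k}(0)$---and that you spell out the $n^{1/6}$-splitting argument for the pressure piece explicitly, whereas the paper simply refers the reader to Theorem~1 of Coelho--Parry \cite{cp} for that estimate.
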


\begin{proof}
It suffices to show that for any fixed sufficiently small $\epsilon >0$ and for all $i,k$, the quantity
$$e^{-nh} \int_{-\epsilon \sqrt{n}}^{\epsilon \sqrt{n}} \frac{1}{|t|}\left| e^{nP_i(itfn^{-1/2})} Q_{i,k}(itn^{-1/2})\chi(\dot{0}) - e^{-\sigma_\varphi^2t^2/2 +nh} Q_{i,k}(0)\chi(\dot{0})\right| dt $$
is $O\left(n^{-1/2}\right)$.
By the triangle inequality, this is a simple consequence of the following two estimates.\\ \indent
For any fixed sufficiently small $\epsilon>0$ ,
\begin{equation}
e^{-nh}\int_{-\epsilon \sqrt{n}}^{\epsilon \sqrt{n}}\frac{1}{|t|} \left|e^{nP_i(itfn^{-1/2})}Q_{i,k}(itn^{-1/2})\chi(\dot{0}) - e^{nP_i(itfn^{-1/2})}Q_{i,k}(0)\chi(\dot{0})\right| dt
\end{equation}
and
\begin{equation}
e^{-nh} \int_{-\epsilon \sqrt{n}}^{\epsilon \sqrt{n}} \frac{1}{|t|} \left| e^{nP_i(itfn^{-1/2})} Q_{i,k}(0)\chi(\dot{0}) - e^{-\sigma_\varphi^2t^2/2 +nh} Q_{i,k}(0)\chi(\dot{0}) \right| dt
\end{equation}
are both $O\left(n^{-1/2}\right)$.
To prove that $(8.9)$ decays at an $O(n^{-1/2})$ rate, recall that for each $i,k$ there exists bounded linear operators $\widetilde{Q}_{i,k}$ such that
$$Q_{i,k}(t) = Q_{i,k}(0) + t \widetilde{Q}_{i,k}(t)$$
for all $t$ sufficiently small. Also, from the Taylor expansion for the pressure $(4.3)$ (recall that we are assuming $\Lambda_\varphi =0$), we can assume that $\epsilon$ is sufficiently small so that for $|t| < \epsilon \sqrt{n}$,
$$|e^{nP_i(itfn^{-1/2}) - nh}| \le e^{-\sigma_\varphi^2t^2/4}.$$
Hence for fixed sufficiently small $\epsilon>0$, there exists $C>0$ such that
\begin{align*}
&e^{-nh}\left|e^{nP_i(itfn^{-1/2})}Q_{i,k}(itn^{-1/2})\chi(\dot{0}) - e^{nP_i(itfn^{-1/2})}Q_{i,k}(0)\chi(\dot{0})\right| \\
&\hspace{5cm}= \frac{|t|}{\sqrt{n}} \left|\widetilde{Q}_{i,k}\left(\frac{|t|}{\sqrt{n}}\right)\right|\left|e^{nP_i(tfn^{-1/2}) - nh}\right|\\ 
&\hspace{5cm}\le \frac{C|t|}{\sqrt{n}}e^{-\sigma_\varphi^2t^2/4},
\end{align*}
for all $|t| < \epsilon \sqrt{n}$.
Substituting this inequality into $(8.9)$ gives the result.\\ \indent
The required decay rate for $(8.10)$ can be proved analogously to Theorem $1$ in \cite{cp}. The proof is almost identical and hence we refer the reader to \cite{cp} for the proof.\\ \indent 
Combining $(8.9)$ and $(8.10)$ concludes the proof of the lemma.
\end{proof}

\begin{lemma}
For fixed small $\epsilon>0$,
$$\textnormal{II}(\epsilon)_n^i = O\left(\frac{1}{\sqrt{n}}\right).$$
\end{lemma}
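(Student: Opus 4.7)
The plan is to show that $\textnormal{II}(\epsilon)^i_n$ decays exponentially in $n$, which is far stronger than the claimed $O(n^{-1/2})$. The crucial input is the uniform spectral-gap bound already extracted in the proof of Lemma 8.5: there exist $K,\delta>0$ such that
$$|L_{C_i,sf}^n(I-Q_i(s))\chi(\dot{0})| \le K e^{n(h-\delta)}$$
uniformly for $|s|<\epsilon$ and all $n \in \mathbb{Z}_{\ge 0}$.

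First I would introduce the shorthand $R_n(s) := L_{C_i,sf}^n(I-Q_i(s))\chi(\dot{0})$ and use the triangle inequality to bound the integrand of $\textnormal{II}(\epsilon)^i_n$ by
$$\frac{|R_n(itn^{-1/2}) - R_n(0)|}{|t|} + \frac{|1-e^{-\sigma_\varphi^2 t^2/2}|}{|t|}\,|R_n(0)|.$$

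For the first piece, I would observe that $R_n$ is analytic in $s$ on the disk $\{|s|<\epsilon\}$ (from analyticity of the eigenprojection $Q_i$ via Proposition 4.5 and analyticity of $s\mapsto L_{C_i,sf}$) and satisfies the uniform bound above. Cauchy's integral formula on a slightly smaller disk then yields $|R_n'(s)|\le K' e^{n(h-\delta)}$ uniformly for $|s|<\epsilon/2$, so the mean value theorem gives $|R_n(itn^{-1/2}) - R_n(0)| \le K' e^{n(h-\delta)}|t|/\sqrt{n}$. The factor of $|t|$ cancels the $1/|t|$ in the integrand, and integrating over $[-\epsilon\sqrt{n},\epsilon\sqrt{n}]$ and multiplying by $e^{-nh}$ gives $O(e^{-n\delta})$. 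For the second piece, the function $|1-e^{-\sigma_\varphi^2 t^2/2}|/|t|$ is $O(|t|)$ near $0$ and bounded by $2/|t|$ away from $0$, so its integral over $[-\epsilon\sqrt{n},\epsilon\sqrt{n}]$ is $O(\log n)$. Since $e^{-nh}|R_n(0)|\le K e^{-n\delta}$, this contribution is $O(e^{-n\delta}\log n)$, also exponentially small.

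I do not expect a real obstacle here. Unlike $\textnormal{I}(\epsilon)^i_n$, which is delicate because it involves the leading eigenvalues and requires the Taylor expansion of the pressure (as handled in Lemma 8.8), the non-dominant part has a clean uniform exponential decay that handily absorbs both the $1/|t|$ singularity and the factor $|1 - e^{-\sigma_\varphi^2 t^2/2}|$. The only subtle point is ensuring that the Cauchy bound on $R_n'$ is uniform in $s$, but this is immediate once the uniform bound on $R_n$ on the fixed disk $\{|s|<\epsilon\}$ has been established, which the paper has already done in the proof of Lemma 8.5.
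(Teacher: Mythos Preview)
Your proof is correct and follows the same triangle-inequality decomposition as the paper: you split off the factor $e^{-\sigma_\varphi^2 t^2/2}$ and handle separately the piece $|R_n(itn^{-1/2})-R_n(0)|/|t|$ and the piece $|1-e^{-\sigma_\varphi^2 t^2/2}|\,|R_n(0)|/|t|$, both of which decay exponentially thanks to the uniform spectral-gap bound from Lemma~8.5. The paper does exactly this, and its treatment of the second piece is essentially identical to yours.

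Where you differ is in the first piece. The paper works at the operator level: it writes $T_{i,n}(s)=e^{-nh}\bigl(L_{C_i,sf}^n(I-Q_i(s))-L_{C_i,0}^n(I-Q_i(0))\bigr)$, invokes an operator mean value theorem together with the Leibniz rule $D(L_t^n)=\sum_{k=1}^n L_t^{n-k}\,DL_t\,L_t^{k-1}$, and uses the uniform bound $\|L_s^n\|\le K\rho^n$ to get $\|T_{i,n}(itn^{-1/2})\|\le C|t|\sqrt{n}\,\rho^n$. Your route is more elementary: you observe that the \emph{scalar} function $s\mapsto R_n(s)$ is analytic and uniformly bounded by $Ke^{n(h-\delta)}$ on a fixed disk, so Cauchy's estimate immediately bounds $R_n'$ by a constant times $e^{n(h-\delta)}$, and the ordinary mean value theorem does the rest. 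This avoids the operator-valued differentiation entirely and is arguably cleaner; the paper's approach, on the other hand, gives slightly more explicit control of the operator norm and would generalise more directly if one needed uniform estimates in a function space rather than at the single point $\dot 0$.

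One harmless bookkeeping point: your Cauchy bound on $R_n'$ is stated for $|s|<\epsilon/2$, while the integration runs over $|t|\le \epsilon\sqrt{n}$, so the segment from $0$ to $itn^{-1/2}$ can leave $\{|s|<\epsilon/2\}$. This is immediately fixed by first taking the uniform spectral-gap bound on a disk of radius $2\epsilon$ (permissible since $\epsilon$ may be shrunk) and then applying Cauchy to get the derivative bound on $\{|s|<\epsilon\}$.
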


\begin{proof}
Recall that by Lemma $4.4$, $L_{C_i,0}^n (I-Q_i(0))\chi(\dot{0}) = O\left(e^{n(h-\delta)}\right)$ for some $\delta >0$. Using this fact and the inequality
$\left|e^z - 1\right| \le |z| e^{|z|}$ it is easy to see that for any fixed sufficiently small $\epsilon$,
$$e^{-nh}\int_{-\epsilon \sqrt{n}}^{\epsilon \sqrt{n}} \frac{1}{|t|} \left| L_{C_i,0}^n(I-Q_i(0))\chi(\dot{0}) - e^{-\sigma_\varphi^2t^2/2} L_{C_i,0}^n(I-Q_i(0))\chi(\dot{0})\right| dt$$
is $O\left(n^{-1/2}\right)$.\\ \indent
Hence to conclude the proof of this lemma it suffices to show that for fixed small $\epsilon>0$ and for all $i$,
\begin{equation}
e^{-nh}\int_{-\epsilon \sqrt{n}}^{\epsilon \sqrt{n}} \frac{1}{|t|} \left| L_{C_i,itn^{-1/2}f}^n(I-Q(itn^{-1/2}))\chi(\dot{0}) -  L_{C_i,0}^n(I-Q_{i,k}(0))\chi(\dot{0})\right| dt 
\end{equation}
is $O\left(n^{-1/2}\right)$.\\ \indent
To obtain the required decay rate for $(8.11)$, we begin by defining operators $T_{i,n}(t)$ by
\begin{equation}
M^n L_{C_i,tf}^n(I-Q_i(t)) = M^nL_{C_i,0}^n(I-Q_i(0)) + T_{i,n}(t),
\end{equation}
where $M$ is the multiplication operator $Mg=e^{-h}g$.
To simplify notation in the following, let $L_t$ denote the operator $ML_{C_i,t}(I-Q_i(t))$. Note that the spectral radius of $L_0$ is strictly less than $1$. As discussed earlier, we can find (at the cost of reducing $\epsilon$), $0<\rho <1$ and $K>0$ such that 
$$\|L_s^n\| \le K\rho^n$$
for all $|s|<\epsilon$ and $n \in \mathbb{Z}_{\ge 0}$.\\ \indent
An operator version of the Mean Value Theorem (see Theorem $3.2$ of \cite{BP}) states that,
$$\|L_t^n - L_0^n \| \le |t| \sup_{0<l<1} \|D(L_{tl}^n)\|,$$
where $D(L_t)$ denotes the derivative of an operator $s \mapsto L_s$ at $t$. Furthermore, applying the Leibniz rule yields
$$D(L_{t}^n) = \sum_{k=1}^n L_t^{n-k} \  DL_t \ L_t^{k-1}.$$
Hence, for fixed, small $\epsilon$, 
\begin{align*}
\|T_{i,n}(itn^{-1/2})\| = \| L_{itn^{-1/2}}^n - L_0^n \| &\le |t|n^{-1/2} \sup_{0<l<1} \|D(L_{itn^{-1/2}l}^n)\|\\
&\le |t| n^{-1/2} C n \rho^{n}\\
&= C |t| \sqrt{n} \rho^{n},
\end{align*}
for some constant $C>0$ independent of $|t| < \epsilon \sqrt{n}$.

Now note that
$$e^{-nh}\left| L_{C_i,itn^{-1/2}f}^n(I-Q(itn^{-1/2}))\chi(\dot{0}) -  L_{C_i,0}^n(I-Q_{i,k}(0))\chi(\dot{0})\right| $$
 can be rewritten as
 $$ \left| T_{i,n}(itn^{-1/2}) \chi(\dot{0}) \right|.$$
We see that for fixed, sufficiently small $\epsilon>0$, there exists a constant $C>0$ (independent of $i$, $n$ and $t$) such that $(8.11)$ is bounded above by
 $$ C \int_{-\epsilon \sqrt{n}}^{\epsilon \sqrt{n}}\frac{1}{|t|} \sqrt{n} |t| \rho^n dt = 2C\epsilon n \rho^n.$$
 This clearly satisfies the required decay rate for $(8.11)$ and thus concludes the proof of the lemma.
\end{proof}

From these two lemmas, we deduce Proposition $8.7$. We are now ready to prove our central limit theorem.

\begin{proof} [Proof of Theorem \ref{CLT}]
By Lemma $8.5$, Proposition $8.6$ and Proposition $8.7$, there exists $\epsilon>0$ such that for $T_n = \epsilon \sqrt{n},$ the following hold.
\begin{enumerate}
\item The pair $H_n, T_n$ satisfy the conditions required to apply Proposition $8.2,$ with $N$ as the normal distribution with mean $0$ and variance $\sigma_\varphi^2 >0$.
\item $$\int_{-T_n}^{T_n} \frac{1}{|t|} |\widehat{H}_n(t)-e^{-\sigma_\varphi^2t^2/2}| \ dt = O\left(\frac{1}{\sqrt{n}}\right).$$
\item $$|E_n|e^{|T_n E_n|}= O\left(\frac{1}{\sqrt{n}}\right).$$
\end{enumerate}
Furthermore, the above implied error term constants are independent of $n \in \mathbb{Z}_{\ge 0}$. Proposition $8.2$ then implies that
$$\|H_n - N\|_{\infty} = O\left(\frac{1}{\sqrt{n}}\right),$$
proving Proposition $8.3$. As discussed in the paragraph preceding Proposition $8.3,$ this convergence implies that
$$\|F_n - N \|_\infty = O\left(\frac{1}{\sqrt{n}}\right)$$
as required.
\end{proof}

\section{Large Deviation Theorem}
In this section we prove our large deviation theorem. We begin by defining the following sequence of measures on $\Sigma_A$,
\begin{align*}
&\mu_n = \frac{1}{\#M_n} \sum_{z \in M_n} \delta_z,
\end{align*}

\noindent where $\delta_{x}$ denotes the Dirac measure based at $x$ and
$$M_n = \{z \in \Sigma_A: \sigma^n(z)=\dot{0}, z_0=\ast \text{ and }  z_{n-1}\neq 0\}.$$
We want to rephrase our large deviation result in terms of $f$ and $\mu_n$ on $\Sigma_A$. A simple calculation gives that
 $$\frac{1}{\#W_n}\#\left\{g \in W_n : \left|\frac{\varphi(g)}{n} \right| > \epsilon \right\} = \mu_n\left\{ z \in \Sigma_A: \left|\frac{f^n(z)}{n}\right| > \epsilon\right\}.$$

\noindent Hence to prove Theorem $1.3$, it suffices to show that for each $\epsilon >0$,
$$\limsup_{n \to \infty} \frac{1}{n} \log \mu_n\left\{ z \in \Sigma_A: \left|\frac{f^n(z)}{n}\right| > \epsilon\right\} <0.$$

We need the following lemma.

\begin{lemma}
Fix $\epsilon >0$. Then, there exists $\rho >0$ and $k \in \{1,...,m\}$ such that for fixed $t\in \mathbb{R}$ satisfying $0< t < \rho$, 
$$ \int e^{tf^n(z)} d \mu_n = O\left(e^{-nh +nt\epsilon/2 + nP_k(tf)}\right).$$
The implied constant depends on $t$ and $\epsilon$ but not on $n$.
\end{lemma}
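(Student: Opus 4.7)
The strategy is to rewrite the integral as a weighted sum over $W_n$, apply the transfer operator expression from Lemma \ref{express}, and then use the spectral decomposition provided by Proposition 4.6. The extra factor $e^{nt\epsilon/2}$ is harmless slack that absorbs the subdominant error terms; it plays a substantive role only in the subsequent Chebyshev argument where it must be dominated by $e^{nt\epsilon}$.

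First, the measure $\mu_n$ is uniform on $M_n$, whose elements are in bijection with $W_n$ via the embedding $i: G \to \Sigma_A$. Condition $(1)$ gives $f^n(i(g)) = \varphi(g)$, so
$$\int e^{tf^n(z)}\, d\mu_n = \frac{1}{\#W_n} \sum_{g \in W_n} e^{t\varphi(g)}.$$
Since $\#W_n = \Theta(e^{nh})$ by Proposition 2.7, it suffices to bound the sum on the right by $O\bigl(e^{nt\epsilon/2 + nP_k(tf)}\bigr)$ for some $k$.

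Second, Lemma \ref{express} produces $\rho_0, \delta > 0$ such that, uniformly in $|t| < \rho_0$,
$$\sum_{g \in W_n} e^{t\varphi(g)} = \sum_{i=1}^m L^n_{C_i, tf}\chi(\dot{0}) + O\!\bigl(e^{n(h-\delta)}\bigr).$$
By Proposition 4.6, for each $i$ and each fixed real $t$ with $|t| < \rho_0$, the operator $L_{C_i, tf}$ has $p_i$ simple maximal eigenvalues of modulus $e^{P_i(tf)}$ with a spectral gap. The usual spectral decomposition then yields a constant $C_t > 0$, independent of $n$, with
$$\bigl|L^n_{C_i, tf}\chi(\dot{0})\bigr| \le C_t \, e^{n P_i(tf)}.$$
Choose $k = k(t) \in \{1, \ldots, m\}$ achieving $\max_i P_i(tf)$.

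Third, since the maps $t \mapsto P_i(tf)$ are continuous with $P_i(0) = h$, I would shrink $\rho_0$ to some $\rho \in (0, \rho_0]$ so that $P_k(tf) \ge h - \delta/2$ for all $0 < t < \rho$. Then the error term $O(e^{n(h-\delta)})$ is dominated by $e^{nP_k(tf) - n\delta/2}$, and we obtain
$$\sum_{g \in W_n} e^{t\varphi(g)} = O\!\bigl(e^{n P_k(tf)}\bigr),$$
with implied constant depending on $t$ only. Dividing by $\#W_n = \Theta(e^{nh})$ and inflating trivially by the nonnegative quantity $e^{n t \epsilon /2}$ yields the claimed bound. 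There is no genuine obstacle in the argument; the main point to get right is that, for each fixed $t$ in the allowed range, the spectral gap in Proposition 4.6 gives a $t$-dependent but $n$-independent constant controlling $\|L^n_{C_i, tf}\|$ relative to $e^{nP_i(tf)}$.
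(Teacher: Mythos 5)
Your reduction of the integral to $\frac{1}{\#W_n}\sum_{g\in W_n}e^{t\varphi(g)}$, the use of Lemma \ref{express}, and the spectral bound on $L^n_{C_i,tf}\chi(\dot{0})$ all match the paper's argument (the paper gets its $t$-dependent constant from Gelfand's formula, which is where the $e^{nt\epsilon/2}$ slack actually originates, whereas you get a cleaner bound from the spectral projections and treat the slack as pure inflation; either is fine). However, there is a genuine gap at the choice of $k$: the lemma asserts the existence of a single $k\in\{1,\ldots,m\}$ and a single $\rho>0$ such that the bound holds for \emph{every} $t\in(0,\rho)$, and you instead choose $k=k(t)$ achieving $\max_i P_i(tf)$ separately for each $t$. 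This proves a strictly weaker statement, and the stronger quantifier order is exactly what the proof of Theorem \ref{ldt} uses: there one fixes $k$ from this lemma, defines $b(s)=-s\epsilon/2-h+P_k(sf)$, differentiates at $s=0$ to get $b'(0)=-\epsilon/2<0$, and only then selects a particular small $t$ with $b(t)<0$; that computation needs $k$ to be chosen before $t$.

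The paper closes this gap with a short but essential extra step that your proposal omits: by Proposition $4.5$ the maps $t\mapsto e^{P_i(tf)}$ are real analytic near $0$, so any two of them either coincide on a neighbourhood of $0$ or their difference has no zeros on some interval $(0,\xi_{ij})$; since there are finitely many components, there exist $\xi>0$ and a single index $k$ with $e^{P_k(tf)}=\max_i e^{P_i(tf)}$ for all $0<t<\xi$, and one then shrinks $\rho$ below $\xi$. You should add this argument (continuity alone does not suffice, since the maximising index could a priori oscillate as $t\downarrow 0$). Alternatively one could keep $k=k(t)$ and repair the downstream large-deviation proof by noting that each $b_i(s)=-s\epsilon/2-h+P_i(sf)$ satisfies $b_i(0)=0$ and $b_i'(0)=-\epsilon/2<0$, so all are simultaneously negative for small $t$; but as written your proof does not establish the lemma as stated.
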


\begin{proof}
Let $\delta, \epsilon$ be as in Lemma $6.1$. Take $0<\rho < \epsilon$. For $0<t<\rho$ and for all $i$, $\left|P_i(tf)-h\right| < \delta$. By Lemma $6.1$ we can write
$$\# W_n \int e^{tf^n(z)} d \mu_n = \sum_{i=1}^m L^n_{C_i,tf}\chi(\dot{0}) +O(e^{n(h-\delta)}).$$
By the Spectral Radius Theorem (Gelfand's formula) we can take $C_t>0$, depending on $t$ but not $i$, such that
$$\| L_{C_i,tf}^n \| \le C_t e^{n(P_i(tf) + \epsilon t/2)}$$
for all $n \in \mathbb{Z}_{\ge 0}$ and $i =1,...,m$. Combining these observations gives that
\begin{align*}
\int e^{tf^n(z)} d \mu_n &= \frac{\sum_{i=1}^m L^n_{C_i,tf}\chi(\dot{0})}{\#W_n} +O(e^{-n\delta})\\
&=O\left(e^{-nh+nt\epsilon/2} \sum_{i=1}^m e^{nP_i(tf)}, e^{-n\delta}\right)\\
&=O\left(e^{-nh +nt\epsilon/2} \sum_{i=1}^m e^{nP_i(tf)}\right).
\end{align*}
We now recall that, by Proposition $4.5$, the maps $ t \mapsto e^{P_i(tf)}$ for $i=1,...,m$, are real analytic. Hence there exists $\xi >0$ and $k \in \{1,...,m\}$ such that for all $0 <  t < \xi$,
$$\max_{i=1,...,m} \left\{ e^{P_i(tf)}\right\} = e^{P_k(tf)}.$$
By reducing $\rho$, if necessary, so that it is less that $\xi$, we see that for fixed $0<t<\rho$,
\begin{align*}
\int e^{tf^n(z)} d \mu_n &=O\left(e^{-nh +nt\epsilon/2} \sum_{i=1}^m e^{nP_i(tf)}\right)\\
&=O\left(e^{-nh +nt\epsilon/2 + nP_k(tf)}\right),
\end{align*}
as required.
\end{proof}

The same proof as the previous lemma gives the following.
\begin{lemma}
Fix $\epsilon>0$. Then, there exists $\rho' < 0$ and $k' \in \{1,...,m\}$ such that for fixed $t\in \mathbb{R}$  satisfying $\rho' < t <0$, 
$$ \int e^{tf^n(z)} d \mu_n = O\left(e^{-nh - nt\epsilon/2 + nP_{k'}(tf)}\right).$$
The implied constant depends on $t$ and $\epsilon$ but not on $n$.
\end{lemma}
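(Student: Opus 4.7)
The plan is to mirror the proof of Lemma~9.1 verbatim, the only adjustment being that the inequality $|t|\epsilon/2=-t\epsilon/2$ for $t<0$ produces the sign change in the exponent stated in the lemma.

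First I would choose $\epsilon,\delta>0$ as in Lemma~6.1 and pick some $\rho'<0$ close enough to $0$ that for every $\rho'<t<0$ and every $i=1,\dots,m$, the pressure satisfies $|P_i(tf)-h|<\delta$ (possible by the real analyticity of $t\mapsto P_i(tf)$ given by Proposition~4.5). Lemma~6.1 then gives
\[
\#W_n\int e^{tf^n(z)}\,d\mu_n=\sum_{i=1}^m L^n_{C_i,tf}\chi(\dot 0)+O\bigl(e^{n(h-\delta)}\bigr),
\]
with the implicit constant uniform in $t$ in this range. Combining this with Proposition~2.7, which gives $\#W_n=\Theta(e^{nh})$, yields
\[
\int e^{tf^n(z)}\,d\mu_n=\frac{1}{\#W_n}\sum_{i=1}^m L^n_{C_i,tf}\chi(\dot 0)+O\bigl(e^{-n\delta}\bigr).
\]

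Next I would apply the Spectral Radius Theorem to the (fixed) operator $L_{C_i,tf}$: for each fixed $t$ with $\rho'<t<0$, the spectral radius is $e^{P_i(tf)}$ (by Proposition~4.6), so there exists $C_t>0$, independent of $i$ and $n$, with
\[
\|L^n_{C_i,tf}\|\le C_t\,e^{n\bigl(P_i(tf)+|t|\epsilon/4\bigr)}\qquad\text{for all }n\ge 0.
\]
Since $|t|=-t$ for $t<0$, this is exactly $C_t\,e^{n(P_i(tf)-t\epsilon/4)}$. Substituting this bound into the previous display and using $\chi(\dot 0)=1$ gives
\[
\int e^{tf^n(z)}\,d\mu_n=O\!\left(e^{-nh-nt\epsilon/4}\sum_{i=1}^m e^{nP_i(tf)}\right),
\]
with the $O(e^{-n\delta})$ remainder absorbed (after possibly shrinking $\rho'$) into the dominant term.

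Finally, by the real analyticity of each $t\mapsto e^{P_i(tf)}$ at $0$ and the fact that all $P_i$ agree to second order at $t=0$ (Lemma~4.9), after possibly shrinking $\rho'$ still further there exists a single index $k'\in\{1,\dots,m\}$ such that
\[
\max_{i=1,\dots,m}e^{P_i(tf)}=e^{P_{k'}(tf)}\qquad\text{for all }t\in(\rho',0).
\]
Absorbing the factor $m$ into the implied constant and noting $-t\epsilon/4<-t\epsilon/2$ for $t<0$ (after absorbing the harmless extra factor $e^{-nt\epsilon/4}$ into the constant, or directly by choosing $|t|\epsilon/2$ in place of $|t|\epsilon/4$ in the Gelfand step) gives the claimed bound
\[
\int e^{tf^n(z)}\,d\mu_n=O\bigl(e^{-nh-nt\epsilon/2+nP_{k'}(tf)}\bigr).
\]
There is no real obstacle here: the argument is a direct transcription of the one for Lemma~9.1, and the only bookkeeping point is that the absolute value in the spectral-radius bound converts to $-t\epsilon/2$ because $t$ is negative.
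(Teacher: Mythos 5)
Your proposal is correct and is essentially the paper's proof: the paper dispatches Lemma 9.2 with the remark that it follows by the same argument as Lemma 9.1, with the absolute value $|t|\epsilon/2=-t\epsilon/2$ producing the sign change, exactly as you do. The only slips are cosmetic — $\chi(\dot 0)$ is not $1$ (you just need $|L^n_{C_i,tf}\chi(\dot 0)|\le\|L^n_{C_i,tf}\|\,\|\chi\|_\theta$), and the factor $e^{-nt\epsilon/4}$ is not absorbed into a constant but simply bounded by $e^{-nt\epsilon/2}$ since $-t>0$, as your alternative clause already notes.
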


We are now ready to prove our large deviation theorem.

\begin{proof} [Proof of Theorem $1.3$]

\noindent Fix $\epsilon >0$. Let $\rho$ and $k$ be those chosen in Lemma $9.1$. Define $b(s) = -s\epsilon/2 - h + P_k(sf)$. Note that $b(0)=0$ and
$$b'(0) = -\epsilon/2 + \frac{d}{ds} P_k(sf)\big|_{s=0} = -\epsilon/2 + \Lambda_\varphi=  - \epsilon/2 <0.$$
Hence we can choose $0 < t < \rho$ such that $b(t) <0$. Fix $t$ at this value, then,
\begin{align*}
\mu_n\left\{ z \in \Sigma_A: \frac{f^n(z)}{n} > \epsilon\right\} &\le \int e^{t(f^n(z) - n\epsilon)} d\mu_n\\
&=e^{-tn\epsilon} \int e^{tf^n(z)} d \mu_n\\
&\le \tilde{C}_t e^{-tn\epsilon - nh + tn\epsilon/2 + nP_k(tf) }\\
&= \tilde{C}_t e^{nb(t)},
\end{align*}
where the second inequality in the above follows from Lemma $9.1$ and $\tilde{C}_t$ is the constant associated to the error term from this lemma.\\ \indent
Hence,
$$\limsup_{n \to \infty} \frac{1}{n} \log \mu_n\left\{ z \in \Sigma_A: \frac{f^n(z)}{n} > \epsilon\right\} \le  b(t) <0.$$
The inequality
$$\limsup_{n \to \infty} \frac{1}{n} \log \mu_n\left\{ z \in \Sigma_A: \frac{f^n(z)}{n} < -\epsilon\right\} <0$$
can be proven in a similar way, this time using Lemma $9.2$ instead of Lemma $9.1$. By our earlier discussion, this concludes the proof.
\end{proof}

\section{Statistics of the abelianisation homomorphism}

In this section we prove Theorem $1.6$. To do so, we generalise our current methods to the multidimensional setting. That is, we show that our methods apply to functions $\varphi: G \to \mathbb{R}^k$ that satisfy Condition $(1)$ and Condition $(2)$ component wise. We begin by recalling the multidimensional central limit theorem for subshifts of finite type. Let $\langle \cdot, \cdot \rangle$ denote the Euclidean inner product. \\ \indent
Suppose $\Sigma_M$ is an irreducible subshift of finite type and $f: \Sigma_M \to \mathbb{R}^k$ a function with components that belong to $F_\theta(\Sigma_M)$ for some $0 < \theta <1$. Then, there exists a covariance matrix $\Sigma \in M_k(\mathbb{R})$ and $\Lambda \in \mathbb{R}^k$ such that for any $A \subset \mathbb{R}^k$,
$$\mu \left\{ x \in \Sigma_M: \frac{f^n(x)-n\Lambda}{\sqrt{n}} \in A\right\} \to \frac{1}{(2\pi \ \text{det}(\Sigma))^{k/2}} \int_{A} e^{-\langle x , \Sigma x \rangle /2} \ dx $$
where $\mu$ is the measure of maximal entropy for $(\Sigma_M, \sigma)$. Furthermore, the following are equivalent.
\begin{enumerate}
\item The above central limit theorem is non-degenerate,
\item $\Sigma$ is positive definite, 
\item $\langle t, f\rangle$ is not cohomologous to a constant for any $t \in \mathbb{R}^k\backslash \{0\}$, 
\item for each $t\in \mathbb{R}^k\backslash\{0\}$ the set $\left\{\langle t , (f^n(x) - n\Lambda)\rangle : x \in \Sigma_M, n\in\mathbb{Z}_{\ge 0}\right\}$ is unbounded.
\end{enumerate}
Let $L_{M, \langle s,f\rangle}$ denote the transfer operator acting on $F_\theta(\Sigma_M)$ defined in Definition $2.2$, i.e.
$$L_{M, \langle s, f \rangle} w(x)=\sum_{\sigma y =x} e^{\langle s, f(y) \rangle } w(y)$$
for $w \in F_\theta(\Sigma_M)$. Proposition $4.5$ implies that for all sufficiently small $s \in \mathbb{C}^k$, the transfer operator $L_{\langle s, f\rangle}$ has $p$ simple maximal eigenvalues of the form $e^{2 \pi i j / p} e^{P(\langle s, f \rangle)}$ for $j=1,...,p$ where $p$ is the period of $M$ and $s \mapsto P(\langle s,f \rangle)$ is analytic (in the multi variable sense) in a neighbourhood of the origin. The constant $\Lambda$ and covariance matrix $\Sigma$ have entries
$$\Lambda_i = \frac{\partial}{\partial s_i} \bigg|_{s=0} P(\langle s,f \rangle) \hspace{1mm} \text{ and } \hspace{1mm} \Sigma_{i,j} = \frac{\partial^2}{\partial s_i \partial s_j} \bigg|_{s=0} P(\langle s, f \rangle )$$
for $i,j \in \{1,...,k\}$ and where $s = (s_1,...,s_k)$.\\ \indent
Using the same arguments as in Sections $5$, we can deduce similar statements concerning the spectra of the operators $L_{C_i, \langle s,f \rangle}$.
\begin{proposition}
There exists $\epsilon >0$ such that for all $\|s\| <\epsilon $ the operators $L_{C_i, \langle s,f \rangle}$ for $i=1,...,m$ each have $p_i$ simple maximal eigenvalues $e^{2 \pi i j/p_i} e^{P_i(\langle s,f \rangle)}$ for $j=0,...,p_i-1$, where each $s \mapsto e^{P_i(\langle s,f \rangle)}$ is analytic in $\|s\| <\epsilon$.
\end{proposition}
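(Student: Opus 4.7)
The plan is to follow exactly the strategy used in the one-dimensional case (Proposition 4.6), treating $s \in \mathbb{C}^k$ as a vector parameter. First, at $s=0$ the operator $L_{C_i, 0}$ is unchanged and so by Proposition 4.4 its spectrum consists of $p_i$ simple maximal eigenvalues at $e^{2\pi ij/p_i}e^h$ for $j=0,\ldots,p_i-1$, with a uniform spectral gap of size $\delta>0$ to the remainder. Second, observe that $s \mapsto L_{C_i, \langle s,f\rangle}$ is jointly analytic as a map from a neighbourhood of $0 \in \mathbb{C}^k$ into the Banach algebra of bounded linear operators on $F_\theta(\Sigma_A)$, since $\langle s,f\rangle = \sum_{j=1}^k s_j f_j$ depends linearly on $s$ and $\exp$ is entire. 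The multivariable version of Proposition 4.5 (which follows by applying Theorem 6.17 of \cite{k} in each coordinate and invoking Hartogs' theorem for the operator-valued eigenprojections) then gives $\epsilon>0$ such that for all $\|s\|<\epsilon$ there are $p_i$ simple isolated eigenvalues of $L_{C_i, \langle s,f\rangle}$ in small disks around the unperturbed ones, each depending analytically on $s$, with the rest of the spectrum contained in $\{|z|<e^h - 2\delta\}$.

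Third, to identify the form of these eigenvalues, I will pass to $L_{B_i,\langle s,f\rangle}$, which by Lemma 2 of \cite{oc} shares the same maximal eigenvalues as $L_{C_i,\langle s,f\rangle}$ for $\|s\|$ small. Taking the $p_i$-th iterate and using the cyclic decomposition $\Sigma_{B_i}= \bigsqcup_{j=0}^{p_i-1} \Sigma_{B_i^j}$, the operator decomposes as a direct sum of operators $L_{B_i^j, \langle s,f\rangle}^{p_i}$, each acting on the aperiodic subshift $(\Sigma_{B_i^j}, \sigma^{p_i})$ and coinciding with $L_{B_i^j, \langle s, \tilde f^j\rangle}$, where $\tilde f^j$ denotes the restriction of $f^{p_i}$ to $\Sigma_{B_i^j}$. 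By standard aperiodic perturbation theory, each such operator has a simple maximal eigenvalue $e^{P(\langle s, \tilde f^j\rangle)}$ which is analytic in $s$ for $\|s\|$ small.

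Fourth, repeating the argument at the end of the proof of Proposition 4.6 verbatim: any eigenfunction of $L_{B_i,\langle s,f\rangle}$ associated to a $p_i$-th root of the maximal eigenvalue of $L_{B_i,\langle s,f\rangle}^{p_i}$ restricts non-trivially to every $\Sigma_{B_i^j}$ (by positivity arguments at $s=0$ and continuity for $\|s\|$ small), which forces the quantities $e^{P(\langle s,\tilde f^j\rangle)}$ to coincide across $j$. Defining
$$P_i(\langle s,f\rangle) := \frac{1}{p_i} P(\langle s,\tilde f^0\rangle),$$
one concludes that the maximal eigenvalues of $L_{C_i,\langle s,f\rangle}$ take the form $e^{2\pi ij/p_i}e^{P_i(\langle s,f\rangle)}$ for $j=0,\ldots,p_i-1$, and analyticity of $s \mapsto P_i(\langle s,f\rangle)$ follows from the analyticity of the pressure function under analytic perturbation of the potential (a further application of Kato's theorem).

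The main obstacle is purely notational rather than conceptual: one must verify that the perturbation-theoretic constructions (eigenprojections, spectral gap estimates, and the coincidence-of-eigenvalues argument across the cyclic components) carry over uniformly as $s$ varies in a multidimensional complex neighbourhood. Since $\langle s,f\rangle$ depends linearly on the components of $s$, the uniformity in $\|s\|$ is automatic after shrinking $\epsilon$ once, so no new analytic input is required beyond what was used in Section 4.
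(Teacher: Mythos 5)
Your proposal is correct and follows essentially the same route as the paper, which proves this proposition simply by asserting that the one-dimensional arguments of Section 4 (the spectral description of $L_{C_i,0}$, Kato's perturbation theorem, passage to $L_{B_i,\langle s,f\rangle}$ via Lemma 2 of \cite{oc}, the cyclic decomposition and $p_i$-th iterates, and the eigenfunction-restriction argument forcing the pressures $P(\langle s,\tilde f^j\rangle)$ to agree across components) carry over verbatim to the multidimensional parameter $s\in\mathbb{C}^k$. The only cosmetic difference is your parenthetical appeal to positivity at $s=0$ plus continuity for the non-vanishing of eigenfunction restrictions, where the paper's argument (which you also invoke) deduces this directly for each small $s$ from the structure of the transfer operator; this does not affect correctness.
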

Futhermore, the argument of Calegari and Fujiwara presented in Proposition $4.8$ can be applied to compare the pressure functions $P_i(\langle s,f \rangle)$ for $i=1,...,m$. The following result can be obtained using the same argument used to prove Proposition $4.8$. The required modification to the proof is simple, we need only replace the use of the central limit theorem for subshifts of finite type with the multidimensional version stated above.
\begin{proposition}
Given $\alpha, \beta \in \{1,...,k\}$ the quantities
$$(\Lambda_{\varphi})_\alpha : =\frac{\partial}{\partial s_\alpha} \bigg|_{s=0} P_i(\langle s,f \rangle) \hspace{1mm} \text{ and } \hspace{ 1mm} (\Sigma_\varphi)_{\alpha,\beta} := \frac{\partial^2}{\partial s_\alpha \partial s_\beta} \bigg|_{s=0} P_i(\langle s,f \rangle)$$
do not depend on the maximal component $B_i$. Furthermore, for each $i=1,...,m$ and $\|s\|<\epsilon$,
$$P_i(\langle s,f \rangle ) = h+ \Lambda_\varphi s + \langle s,  \Sigma_\varphi s \rangle + O(\|s\|^3)$$
as $s \to 0$.
\end{proposition}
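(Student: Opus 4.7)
The strategy is to reduce the multidimensional claim to Proposition 4.8 by considering arbitrary one-dimensional projections. Given any $t \in \mathbb{R}^k$, I would first check that the real-valued function $g \mapsto \langle t, \varphi(g) \rangle$ satisfies Condition (1) and Condition (2). Condition (1) holds with associated potential $\langle t, f \rangle \in F_\theta(\Sigma_A)$ because
$$\langle t, \varphi(g) \rangle = \sum_{j=1}^k t_j f_j^{|g|}(i(g)) = \langle t, f \rangle^{|g|}(i(g)),$$
and Condition (2) is clear since linear combinations of Lipschitz functions are Lipschitz.

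Next, apply Proposition 4.8 to $\langle t, \varphi \rangle$. Its one-parameter scalar pressure is $r \mapsto P_i(r\langle t, f \rangle) = P_i(\langle rt, f \rangle)$, and the chain rule identifies
$$\frac{d}{dr}\bigg|_{r=0} P_i(\langle rt, f \rangle) = \langle t, \Lambda_{i,\varphi} \rangle, \qquad \frac{d^2}{dr^2}\bigg|_{r=0} P_i(\langle rt, f \rangle) = \langle t, \Sigma_{i,\varphi}\, t \rangle.$$
Proposition 4.8 asserts both quantities are independent of $i$. Letting $t$ range over $\mathbb{R}^k$, this shows that $\Lambda_{i,\varphi}$ is independent of $i$ and that the symmetric quadratic form $t \mapsto \langle t, \Sigma_{i,\varphi}\, t \rangle$ is independent of $i$; polarization then gives that the symmetric matrix $\Sigma_{i,\varphi}$ itself does not depend on the maximal component, justifying the unambiguous notation $\Lambda_\varphi$ and $\Sigma_\varphi$.

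Finally, for the Taylor expansion, Proposition 10.1 gives that $s \mapsto P_i(\langle s, f \rangle)$ is jointly analytic in a complex neighbourhood of the origin, with $P_i(0) = h$ the topological entropy (as in the scalar case treated by Proposition 4.4). The standard multivariable Taylor expansion to second order, combined with the identification of first and second partials from the previous paragraph, yields
$$P_i(\langle s, f \rangle) = h + \langle \Lambda_\varphi, s \rangle + \tfrac{1}{2} \langle s, \Sigma_\varphi\, s \rangle + O(\|s\|^3)$$
as $s \to 0$, matching the stated expansion up to the convention for the quadratic-form factor. The only step requiring any genuine input is the invocation of Proposition 4.8, whose scalar proof is already in place; one could alternatively redo the ergodicity/Patterson--Sullivan argument of Proposition 4.8 verbatim using the multidimensional CLT stated at the start of Section 10 in place of the scalar CLT, as the author indicates, but the polarization reduction above is considerably shorter. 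There is no real obstacle beyond bookkeeping; the main point to verify carefully is simply that $\langle t, \varphi \rangle$ inherits Condition (1) with potential $\langle t, f \rangle$.
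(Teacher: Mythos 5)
Your proposal is correct, but it takes a genuinely different route from the paper. The paper does not reprove anything via projections: it observes that the Calegari--Fujiwara ergodicity/Patterson--Sullivan argument used to prove Proposition 4.8 goes through verbatim once the scalar central limit theorem for subshifts of finite type is replaced by the multidimensional CLT stated at the start of Section 10, and it records the result with only that remark. You instead treat the scalar Proposition 4.8 as a black box: for each $t\in\mathbb{R}^k$ the directional projection $\langle t,\varphi\rangle$ satisfies Conditions (1) and (2) with potential $\langle t,f\rangle$ on the same graph (this is the one point you rightly single out, and it is exactly where the common coding of all components on one $\Sigma_A$ is used, as in the paper's setup), so $P_i(\langle rt,f\rangle)$ is the scalar pressure of that projection, and Proposition 4.8 makes its first two derivatives at $r=0$ independent of $i$; the chain rule identifies these with $\langle t,\Lambda_{i,\varphi}\rangle$ and $\langle t,\Sigma_{i,\varphi}t\rangle$, and polarization of the symmetric Hessian recovers the full matrix. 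This is shorter and needs no new dynamical input, which is what it buys you; what the paper's route buys is that the multidimensional CLT machinery is being set up in Section 10 anyway and the same argument pattern is reused later, so no reduction step or polarization bookkeeping is needed and the proof reads uniformly with Proposition 4.8. Your remark about the quadratic term is also apt: the statement as printed omits the factor $1/2$ present in the scalar expansion (4.3), so your expansion with $\tfrac12\langle s,\Sigma_\varphi s\rangle$ is the correct Taylor form and the discrepancy is a convention/typo in the statement, not a defect of your argument.
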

We now turn our attention to the non-degeneracy criteria in the multidimensional setting. Lemma $5.1$ can be easily generalised using the multidimensional criteria for degeneracy stated above. We obtain the following result.
\begin{proposition}
Let $\Sigma_\varphi$ be the covariance matrix defined above. Then $\Sigma_\varphi$ is positive definite if and only if for each non-zero $t \in \mathbb{R}$, the function $\langle t, \varphi(\cdot) - \Lambda_\varphi | \cdot|\rangle : G \to \mathbb{R}$ is  unbounded.
\end{proposition}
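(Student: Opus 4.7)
The plan is to reduce this multidimensional non-degeneracy criterion to the one-dimensional characterisation already established in Lemma \ref{cohomcond}. For each fixed non-zero $t \in \mathbb{R}^k$, the scalar function $\langle t, \varphi \rangle : G \to \mathbb{R}$ inherits Condition $(1)$ and Condition $(2)$ from the components of $\varphi$, with associated H\"older potential $\langle t, f\rangle \in F_\theta(\Sigma_A)$. Thus the one-dimensional theory of Sections 4 and 5 applies to $\langle t, \varphi\rangle$, and yields a constant $\Lambda_{\langle t, \varphi\rangle}$ and a variance $\sigma^2_{\langle t, \varphi\rangle}$ attached to this scalar function.

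First I would identify $\Lambda_{\langle t, \varphi\rangle}$ and $\sigma^2_{\langle t, \varphi\rangle}$ in terms of the multidimensional quantities $\Lambda_\varphi$ and $\Sigma_\varphi$. Specialising the Taylor expansion in Proposition 10.2 to the one-parameter family $s = ut$ with $u \in \mathbb{C}$ small and using the uniqueness of analytic continuation, one has
$$P_i(u \langle t, f\rangle) = h + u \langle t, \Lambda_\varphi\rangle + \tfrac{u^2}{2}\langle t, \Sigma_\varphi t\rangle + O(u^3),$$
for each maximal component $B_i$. Reading off the coefficients and comparing with the scalar Taylor expansion of Lemma 4.9 applied to $\langle t, \varphi\rangle$ gives $\Lambda_{\langle t, \varphi\rangle} = \langle t, \Lambda_\varphi\rangle$ and $\sigma^2_{\langle t, \varphi\rangle} = \langle t, \Sigma_\varphi t\rangle$.

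With this identification in hand, the proof essentially writes itself. Positive definiteness of $\Sigma_\varphi$ is by definition equivalent to $\langle t, \Sigma_\varphi t\rangle > 0$ for every non-zero $t \in \mathbb{R}^k$, that is, $\sigma^2_{\langle t, \varphi\rangle} > 0$ for every such $t$. Applying the equivalence $(1) \Leftrightarrow (7)$ of Lemma \ref{cohomcond} to the scalar function $\langle t, \varphi\rangle$, this holds if and only if
$$\{\langle t, \varphi(g)\rangle - |g|\Lambda_{\langle t, \varphi\rangle} : g \in G\} = \{\langle t, \varphi(g) - |g|\Lambda_\varphi\rangle : g \in G\}$$
is unbounded in $\mathbb{R}$, which is precisely the condition in the proposition.

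There is no substantive obstacle; the only care needed is the bookkeeping around the complex Taylor expansion and the verification that $\langle t, \varphi\rangle$ genuinely has $\langle t, f\rangle$ as its potential in the sense of Condition $(1)$, both of which are routine. The real content lies upstream in Proposition 10.2 (extending the Calegari--Fujiwara argument from Section 4) and in Lemma \ref{cohomcond} itself.
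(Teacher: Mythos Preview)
Your proposal is correct and matches the paper's approach: the paper does not write out a proof but simply asserts that Lemma \ref{cohomcond} ``can be easily generalised using the multidimensional criteria for degeneracy stated above,'' which is exactly your reduction to the scalar case $\langle t,\varphi\rangle$ together with the identification $\sigma^2_{\langle t,\varphi\rangle}=\langle t,\Sigma_\varphi t\rangle$ via the Taylor expansion of Proposition 10.2. You have merely made explicit the bookkeeping the paper leaves implicit.
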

We are now ready to prove a multidimensional central limit theorem.
\begin{theorem}\label{mdlt2}
Suppose $\varphi: G\to\mathbb{R}^k$ satisfies Condition $(1)$ and Condition $(2)$ componentwise. Then there exists $\Lambda_\varphi \in \mathbb{R}^k$ and a symmetric matrix $\Sigma_\varphi \in M_k(\mathbb{R})$ such that
$$\frac{1}{\#W_n} \# \left\{ g \in W_n : \frac{\varphi(g)-\Lambda_\varphi n}{\sqrt{n}} \in A \right\} \rightarrow \frac{1}{(2\pi \ \det(\Sigma_\varphi))^{k/2}} \int_{A} e^{-\langle x, \Sigma_\varphi x \rangle /2} \ dx $$
as $n\to\infty$. Furthermore, $\Sigma_\varphi$ is positive definite if and only if for each non-zero $t\in\mathbb{R}^k$ the function $\langle t, \varphi(\cdot) - \Lambda_\varphi | \cdot|\rangle : G \to \mathbb{R}$ is unbounded.
\end{theorem}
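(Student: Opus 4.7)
The plan is to mimic the characteristic-function argument from the proof of Theorem \ref{CLT}, adapted to the multidimensional setting. Since Theorem \ref{mdlt2} asserts only weak convergence with no Berry--Esseen rate, it suffices to establish pointwise convergence of the characteristic functions and invoke the multidimensional L\'evy continuity theorem, which bypasses the more delicate integral estimates of Section 8. First I would reduce to the case $\Lambda_\varphi = 0$ exactly as in Lemma 6.2, by replacing $\varphi$ with $g \mapsto \varphi(g) - |g|\Lambda_\varphi$; this subtraction preserves both $\Sigma_\varphi$ and the componentwise Conditions $(1)$ and $(2)$.

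For $t \in \mathbb{R}^k$ set
$$\widehat F_n(t) = \frac{1}{\#W_n}\sum_{g \in W_n} \exp\bigl(i\langle t, \varphi(g)\rangle / \sqrt{n}\bigr).$$
The proof of Lemma \ref{express} carries over verbatim to the multivariable setting to give, uniformly in $s \in \mathbb{C}^k$ of sufficiently small norm,
$$\sum_{g\in W_n} e^{\langle s, \varphi(g)\rangle} = \sum_{i=1}^m L^n_{C_i,\langle s, f\rangle}\chi(\dot{0}) + O\bigl(e^{n(h-\delta)}\bigr).$$
Setting $s = it/\sqrt{n}$ and applying the spectral decomposition from Proposition 10.1 to each $L^n_{C_i, \langle it/\sqrt{n}, f\rangle}$ yields
$$L^n_{C_i, \langle it/\sqrt{n}, f\rangle}\chi(\dot{0}) = \sum_{k=0}^{p_i-1} e^{2\pi i nk/p_i}\, e^{nP_i(\langle it/\sqrt{n}, f\rangle)}\, Q_{i,k}(it/\sqrt{n})\chi(\dot{0}) + O\bigl(e^{n(h-\delta)}\bigr),$$
valid once $\|t\|/\sqrt{n}$ lies in the spectral-gap regime. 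The Taylor expansion of Proposition 10.2 at $s=0$, combined with $\Lambda_\varphi=0$, gives
$$n\, P_i\bigl(\langle it/\sqrt{n}, f\rangle\bigr) = nh - \tfrac{1}{2} \langle t, \Sigma_\varphi t\rangle + O(n^{-1/2})$$
uniformly in $i$, while analyticity of $s \mapsto Q_{i,k}(s)\chi(\dot{0})$ at $s=0$ yields $Q_{i,k}(it/\sqrt{n})\chi(\dot{0}) = Q_{i,k}(0)\chi(\dot{0}) + O(n^{-1/2})$. Writing $a_n := \sum_{i,k} e^{2\pi i nk/p_i}\, Q_{i,k}(0)\chi(\dot{0})$, these expansions combine to yield
$$\sum_{g \in W_n} e^{i\langle t, \varphi(g)\rangle/\sqrt{n}} = a_n\, e^{nh}\, e^{-\langle t, \Sigma_\varphi t\rangle/2} + o(e^{nh}), \qquad \#W_n = a_n\, e^{nh} + o(e^{nh}).$$
Since $\#W_n = \Theta(e^{nh})$ by Proposition 2.7, $|a_n|$ is bounded away from $0$, so $\widehat F_n(t) \to e^{-\langle t, \Sigma_\varphi t\rangle/2}$ pointwise in $t$. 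L\'evy's multidimensional continuity theorem then gives the claimed weak convergence, and the non-degeneracy equivalence is immediate from Proposition 10.3.

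The main technical subtlety is the bookkeeping of the cyclic phases $e^{2\pi i nk/p_i}$ arising from the non-irreducible and non-aperiodic structure of each maximal component. The point is that the \emph{same} oscillatory factor $a_n$ appears in the numerator and denominator of $\widehat F_n(t)$, which is only possible because the scalar Gaussian factor $e^{-\langle t, \Sigma_\varphi t\rangle/2}$ is independent of the component index $i$; this independence is precisely the content of Proposition 10.2. Apart from verifying this cancellation, everything else is a routine multidimensionalisation of the analysis in Sections 6--8.
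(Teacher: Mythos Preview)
Your proposal is correct and follows essentially the same approach as the paper: reduce to $\Lambda_\varphi = 0$, express the characteristic function via transfer operators using the multidimensional version of Lemma~\ref{express}, apply the spectral decomposition and Taylor expansion of Propositions~10.1--10.2 to both numerator and denominator, and conclude by L\'evy's continuity theorem together with Proposition~10.3 for non-degeneracy. The only cosmetic difference is that the paper packages the cancellation of the cyclic-phase sum by writing $\widehat F_n(t) = e^{-\langle t,\Sigma_\varphi t\rangle/2}\, G_n(t)$ and arguing $G_n(t)\to 1$, whereas you isolate the common factor $a_n$ explicitly and invoke Proposition~2.7 to bound it away from zero; the underlying computation is identical.
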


\begin{proof}
We have already discussed the non-degeneracy criteria. We therefore just need to prove the central limit theorem. As in the previous sections, we may assume that $\Lambda_\varphi =0$. It then suffices, by L\'evy's Continuity Theorem, to show that for each $t \in \mathbb{R}^k$,
$$\widehat{F}_n(t) \to e^{-\langle t,\Sigma_\varphi t \rangle /2}$$
as $n\to\infty$, where
$$\widehat{F}_n(t) = \frac{1}{\# W_n} \sum_{g\in W_n} e^{i \langle t, \varphi \rangle n^{-1/2}}.$$ 
Using a multidimensional analogue of Lemma $6.1$ (which can be proved in the same way as the one-dimensional version), we can write, for all $\|t\|n^{-1/2}$ sufficiently small,
$$ \sum_{g\in W_n} e^{i \langle t, \varphi \rangle n^{-1/2}} = \sum_{i=1}^m L_{C_i, i\langle t, f \rangle n^{-1/2}}^n \chi(\dot{0}) + o(e^{nh})$$
as $n \to \infty$.
Hence,
$$ \widehat{F}_n(t) = \frac{ \sum_{i=1}^m L_{C_i, i\langle t, f \rangle n^{-1/2}}^n \chi(\dot{0}) + o(e^{nh})}{ \sum_{i=1}^m L_{C_i,0}^n\chi(\dot{0}) + o(e^{nh})}$$
as $n\to\infty$. Using the projections $Q_{i,k}$ and $Q_i$ for $i=1,...,m, k=0,...,p_i-1$, we can write
$$\widehat{F}_n(t) = e^{-\langle t,\Sigma_\varphi t \rangle /2} \ G_n(t)$$
where
$$G_n(t) =  \frac{ \sum_{i=1}^m \sum_{k=0}^{p_i-1} e^{n P(\langle itn^{-1/2}, f \rangle ) + \langle t,\Sigma_\varphi t \rangle /2}e^{2 \pi i k n /p_i} Q_{i,k}(itn^{-1/2})\chi(\dot{0}) + o(1) }{ \sum_{i=1}^m \sum_{k=0}^{p_i-1} e^{2 \pi i k n/p_i} Q_{i,k}(\chi)(\dot{0}) + o(1)}.$$
By the analyticity of the $Q_{i,k}$, for each $i =1,...,m$ and $k=0,...,p_i-1$, $Q_{i,k}(t) = Q_{i,k}(0) + O(\|t\|)$. Also, using the Taylor expansions for the pressures from Proposition $10.2$, for each $i=1,...,m$, $n P(\langle itn^{-1/2}, f \rangle ) + \langle t,\Sigma_\varphi t \rangle /2 = O(n^{-1/2})$. Combining these facts gives that
$$ G_n(t)=\frac{ \sum_{i=1}^m \sum_{k=0}^{p_i-1} e^{n P(\langle itn^{-1/2}, f \rangle ) + \langle t,\Sigma_\varphi t \rangle /2}e^{2 \pi i k n /p_i} Q_{i,k}(0)\chi(\dot{0}) + o(1) }{ \sum_{i=1}^m \sum_{k=0}^{p_i-1} e^{2 \pi i k n/p_i} Q_{i,k}(0)\chi(\dot{0}) + o(1)}$$ and so for each $t\in\mathbb{R}$, $G_n(t) \to 1$ as $n\to\infty$. Hence $\widehat{F}_n(t) \to e^{-\langle t,\Sigma_\varphi t \rangle /2}$ as $n\to\infty$ as required.
\end{proof}
We can now deduce Theorem $1.6$ as a corollary of the above result. Suppose that the abelianisation of $G$ is isomorphic to $\mathbb{Z}^k \oplus \text{Torsion }$ for some $k \ge 1$. Fix an isomorphism taking the non-torsion part of $G/[G,G]$ to $\mathbb{Z}^k$ and let $\varphi: G \to \mathbb{Z}^k$ be the induced homomorphism.
\begin{proof} [Proof of Theorem $1.6$]
To conclude the proof of Theorem $1.6$ we need to show that $\Lambda_\varphi=0$ and $\Sigma_\varphi$ is positive definite. To see that $\Lambda_\varphi =0$ note that for each $j=1,\ldots,k$ the $j$th coordinate of $\Lambda_\varphi$ is the mean $\Lambda_{\varphi_j}$ of the homomorphism $\varphi_j$ obtained by projecting $\varphi$ to its $j$th coordinate. By Theorem \ref{averaging} and a simple symmetry argument $\Lambda_{\varphi_j} =0$ for all $j=1,\ldots,k$. This concludes the first part of the proof. For the second part we need to show that $\langle t, \varphi\rangle$ is  unbounded for any $t \in \mathbb{R}^k \backslash \{0\}$. Since $\varphi$ is surjective onto $\mathbb{Z}^k$, the function $\psi_t: G \to \mathbb{R}$ defined by $ \psi_t = \langle t, \varphi\rangle$ is a non-trivial group homomorphism for any $t \in\mathbb{R}^k\backslash\{0\}$ and the result follows.
\end{proof}

\begin{remark}
The above proof applies to any surjective group homomorphism $\varphi: G \to \mathbb{Z}^k$.
\end{remark}

\section{Local limit theorem}
In this section we prove our local limit theorem, Theorem \ref{ndllt}. Suppose $\varphi : G \to \mathbb{R}$ is a group homomorphism satisfying the hypothesis of Theorem \ref{ndllt}. As in the other sections, we want to  study the function $f: \Sigma_A \to \mathbb{R}$ corresponding to $\varphi$. We begin by recalling the following definition.
\begin{definition}
We say that $f \in F_\theta$ is lattice if there exists $a,b \in \mathbb{R}$ such that
$$\{ f^n(x) - an : x\in \Sigma_A, n \in \mathbb{Z}_{\ge 0} \text{ with } \sigma^n(x) =x \} \subseteq b \mathbb{Z}.$$
\end{definition}
We want to prove that if $f$ is related to $\varphi$ via Condition $(1)$, then the restriction of $f$ to each maximal component is non-lattice. This will allow us to deduce important spectral properties for the transfer operators $L_{C_j,itf}$ where $t$ is real. The aim of the next couple of lemmas is to prove this. Recall that three real numbers $x,y,z$ are said to be rationally independent if the only solution $(\alpha, \beta, \gamma) \in \mathbb{\mathbb{Q}}^3$ to $\alpha x + \beta y+ \gamma z =0$, is the trivial solution with $\alpha=\beta=\gamma =0$.
\begin{lemma}\label{triple}
Suppose that there exists $g_1,g_2, g_3$ in $G$ such that $\varphi(g_1),\varphi(g_2), \varphi(g_3)$ form a rationally independent triple. Then, for any $a,b \in \mathbb{R}$, $H_{a,b} = \varphi^{-1}(a\mathbb{Z} \oplus b\mathbb{Z})$ is an infinite index subgroup of $G$, i.e. $|H_{a,b} : G| =\infty$.
\end{lemma}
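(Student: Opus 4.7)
The plan is to argue by contradiction. Suppose $H_{a,b}$ had finite index $N$ in $G$. I would first translate the finite-index hypothesis into a containment statement about $\varphi(G)$. Specifically, for any $g \in G$, the cosets $g^0 H_{a,b}, g^1 H_{a,b}, \ldots, g^N H_{a,b}$ cannot all be distinct, so there exists $1 \le k \le N$ with $g^k \in H_{a,b}$. Applying $\varphi$ and using that it is a homomorphism gives $k\varphi(g) \in a\mathbb{Z}+b\mathbb{Z}$, hence
$$\varphi(g) \in \tfrac{1}{k}(a\mathbb{Z}+b\mathbb{Z}) \subset \mathbb{Q}a + \mathbb{Q}b.$$
Since this holds for every $g \in G$, the entire image $\varphi(G)$ lies inside the $\mathbb{Q}$-vector subspace $\mathbb{Q}a + \mathbb{Q}b$ of $\mathbb{R}$.

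The key observation is then that $\mathbb{Q}a + \mathbb{Q}b$ has dimension at most $2$ as a $\mathbb{Q}$-vector space. In particular, any three elements of it must be $\mathbb{Q}$-linearly dependent. Applying this to $\varphi(g_1), \varphi(g_2), \varphi(g_3) \in \mathbb{Q}a + \mathbb{Q}b$ produces a non-trivial rational relation among these three values, contradicting the hypothesis that they form a rationally independent triple. This contradiction forces $H_{a,b}$ to have infinite index in $G$.

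There is no genuine obstacle here: once one spots that the finite-index assumption pushes $\varphi(G)$ into a $2$-dimensional $\mathbb{Q}$-subspace of $\mathbb{R}$, the rational dependence is immediate from pigeonhole on dimensions. The only step that requires even a moment's care is the passage from ``$H_{a,b}$ has finite index $N$'' to ``for every $g$, some power $g^k \in H_{a,b}$ with $k \le N$''; this is standard and does not require normality of $H_{a,b}$, since the conclusion $\varphi(g) \in \mathbb{Q}a+\mathbb{Q}b$ does not depend on the value of $k$.
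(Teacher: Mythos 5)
Your proposal is correct and is essentially the paper's argument run in the contrapositive: the paper picks, using the rational independence of $\varphi(g_1),\varphi(g_2),\varphi(g_3)$, an element $g$ with $\varphi(g)\notin \mathbb{Q}a+\mathbb{Q}b$ and shows directly that the cosets $g^kH_{a,b}$ are pairwise distinct, whereas you assume finite index, use pigeonhole on cosets of powers to force $\varphi(G)\subset \mathbb{Q}a+\mathbb{Q}b$, and contradict independence by the dimension count. The two proofs rest on exactly the same two observations (powers landing in $H_{a,b}$ force $\varphi(g)$ into $\mathbb{Q}a+\mathbb{Q}b$, and three rationally independent values cannot lie in that $2$-dimensional $\mathbb{Q}$-space), so no further comment is needed.
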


\begin{proof}
For each $a,b \in \mathbb{R}$ there is $g \in G$ with $\varphi(g) \notin a \mathbb{Q} \oplus b \mathbb{Q}$. Indeed, if no such $g$ exists then we can find $x_i,y_i \in \mathbb{Q}$ for $i=1,2,3$ such that
$$\varphi(g_i) = ax_i + by_i \ \ \ \text{ for} \ \ \ i=1,2,3.$$
Eliminating $a$ and $b$ would imply that the $\varphi(g_i)$ are rationally dependent contrary to our assumption.\\ \indent
Now consider for $k,l \in \mathbb{Z}$ the cosets $g^k H_{a,b}, g^l H_{a,b}$ for $g \notin a \mathbb{Q} \oplus b \mathbb{Q}$ . If these cosets coincide then $g^{k-l} \in H_{a,b}$, $(k-l)\varphi(g) \in a \mathbb{Z} \oplus b \mathbb{Z}$ and so $\varphi(g) \in a \mathbb{Q} \oplus b \mathbb{Q}$. This contradiction implies that $g^k H_{a,b}$ and $g^l H_{a,b}$ are distinct for $k \neq l$. Hence $|H_{a,b} : G| =\infty$ as required.
\end{proof}

We now require the following result of Gou\"ezel, Math\`eus and Maucourant.
\begin{proposition} [Theorem $4.3$ \cite{gouz}] \label{gouzthm}
Suppose $G$ is a non-elementary hyperbolic group equipped with a finite generating set and $H< G$ is an infinite index subgroup of $G$. Then the density of $H$ with respect to $W_n$ is zero, i.e.
$$\lim_{n\to \infty} \frac{\#(W_n \cap H)}{\#W_n} = 0.$$
\end{proposition}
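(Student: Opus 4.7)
The plan is to prove the stronger statement that $\#(W_n \cap H)$ has strictly smaller exponential growth rate than $\#W_n$. Setting $\lambda_H := \limsup_{n\to\infty} \#(W_n \cap H)^{1/n}$ and invoking Coornaert's Proposition 2.7, which gives $\#W_n = \Theta(\lambda^n)$, it suffices to establish the strict gap $\lambda_H < \lambda$. Granted this, the ratio $\#(W_n \cap H)/\#W_n$ would in fact decay exponentially, which is considerably stronger than the stated conclusion.

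I would first attempt a Patterson-Sullivan argument applied to $H$. Weak-$\ast$ limits of the normalized atomic measures $\sum_{h \in H,\, |h| \le n} e^{-s|h|} \delta_h$ as $s \downarrow \log \lambda_H$ produce an $H$-invariant conformal density $\nu_H$ on the limit set $\Lambda(H) \subseteq \partial G$ of dimension $\log \lambda_H$. When $\Lambda(H) \subsetneq \partial G$ (the case of quasiconvex subgroups being prototypical), a direct comparison of $\nu_H$ with the Patterson-Sullivan measure $\nu$, together with ergodicity of the $G$-action on $(\partial G, \nu)$ and standard shadow estimates, rules out $\lambda_H = \lambda$: a proper closed $H$-invariant subset of $\partial G$ cannot carry a conformal density of the same critical dimension as $\nu$.

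The main obstacle is the complementary case $\Lambda(H) = \partial G$, which occurs for instance when $H$ is an infinite normal subgroup such as $[G,G]$. Here the limit set comparison is vacuous. To handle it, I would exploit the strongly Markov structure from Section~2. Form the transfer-operator-like object on $V(\mathcal{G}) \times (G/H)$ defined by
$$(T\psi)(v, xH) = \sum_{(v,v') \in E} \psi\bigl(v', \rho(v,v')^{-1} xH\bigr),$$
whose iterates count exactly the paths of length $n$ in $\mathcal{G}$ starting at $\ast$ whose labeling lies in $H$ (up to the usual contribution from paths avoiding the maximal components, which is handled as in Lemma 6.1). Showing that the spectral radius of $T$ on a suitable weighted $\ell^p$ space is strictly less than $\lambda$ amounts to showing that the ``random walk'' on the infinite coset space $G/H$ induced by the Markov labeling disperses quickly enough to suppress the leading exponential growth: any eigenfunction of $T$ with eigenvalue $\lambda$ would, by the irreducibility analysis of Section~4 applied fiberwise, have to be essentially constant along $G/H$-orbits, contradicting $\ell^p$-summability on the infinite coset space. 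Quantifying this dispersion on the infinite state space, and relating it via Gou\"ezel-Math\`eus-Maucourant's random walk and Martin boundary techniques to the geometry of $G$, is the technical heart of the argument.
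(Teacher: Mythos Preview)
The paper does not prove this proposition at all; it is quoted verbatim as Theorem~4.3 of Gou\"ezel--Math\'eus--Maucourant \cite{gouz} and used as a black box. So there is nothing in the paper to compare your argument against.

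More importantly, your strategy cannot succeed as written. You propose to establish the strict exponential gap $\lambda_H < \lambda$, but this stronger statement is \emph{false} in general. If $H \lhd G$ is normal with amenable infinite quotient (for instance $G/H \cong \mathbb{Z}$, which occurs whenever $G$ surjects onto $\mathbb{Z}$), then $\lambda_H = \lambda$; this is precisely the content of the growth-gap characterisation of co-amenability due to Coulon--Dal'bo--Sambusetti \cite{cds}. In that regime $\#(W_n \cap H)/\#W_n$ still tends to zero, but only subexponentially, so the conclusion you are aiming for is strictly too strong.

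Your transfer-operator sketch breaks at exactly the same point. The operator $T$ you write down on functions of $V(\mathcal{G}) \times (G/H)$ is, up to the Markov coding, a twisted version of the right-regular representation of $G$ on $\ell^p(G/H)$. By Kesten-type criteria, its spectral radius on $\ell^2$ equals $\lambda$ precisely when $G/H$ is amenable; your proposed contradiction (``an $\ell^p$ eigenfunction would have to be constant on orbits'') therefore does not materialise, because in the amenable case approximate eigenfunctions do exist even though no genuine $\ell^p$ eigenfunction does. The actual proof in \cite{gouz} proceeds quite differently, via entropy and drift inequalities for random walks, and yields only the density statement, not an exponential gap.
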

Using this and the previous lemma we deduce the following.

\begin{lemma} \label{cont}
For each $a,b \in \mathbb{R}$ there exist $D \in \mathbb{Z}_{\ge 0}$ such that
$$ \#\{g \in W_{Dn} : \varphi(g) - aDn \in b\mathbb{Z}\} = o(\#W_{Dn})$$
as $n\to\infty$.
\end{lemma}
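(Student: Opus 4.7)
\emph{Plan.} The starting observation is that since $Dn\in\mathbb{Z}$ gives $aDn\in a\mathbb{Z}$, the set in the statement is always contained in $H_{a,b}\cap W_{Dn}$, where $H_{a,b}:=\varphi^{-1}(a\mathbb{Z}+b\mathbb{Z})$. I would first ask whether $H_{a,b}$ has infinite or finite index in $G$. If $H_{a,b}$ has infinite index, Proposition~\ref{gouzthm} immediately yields
\[
\frac{\#\{g\in W_{Dn}:\varphi(g)-aDn\in b\mathbb{Z}\}}{\#W_{Dn}} \leq \frac{\#(H_{a,b}\cap W_{Dn})}{\#W_{Dn}} \to 0
\]
for any $D\geq 1$. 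This case subsumes the conclusion of Lemma~\ref{triple} (it applies whenever $\varphi(G)$ has rank $\geq 3$), but also handles a wider range of rank-$2$ situations.

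The delicate case is when $H_{a,b}$ has finite index in $G$. A rank computation on the image $(\varphi(G)+a\mathbb{Z}+b\mathbb{Z})/(a\mathbb{Z}+b\mathbb{Z})$, using that $\varphi(G)$ is finitely generated and dense in $\mathbb{R}$, forces $\varphi(G)$ to have rank exactly $2$, writes $\varphi(G)=c\mathbb{Z}+d\mathbb{Z}$ with $c/d\notin\mathbb{Q}$ and $\varphi=c\psi_1+d\psi_2$ for a surjective $(\psi_1,\psi_2):G\twoheadrightarrow\mathbb{Z}^2$, and constrains $a, b$ to be rationally independent with $a, b\in\mathbb{Q}c+\mathbb{Q}d$. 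Writing $a=\alpha c+\beta d$ and $b=\gamma c+\delta d$ with $\alpha,\beta,\gamma,\delta\in\mathbb{Q}$, the determinant $\Delta:=\alpha\delta-\beta\gamma$ is non-zero. Choose $D$ to clear the denominators of $\alpha,\beta$, and let $K\in\mathbb{Z}_{>0}$ clear the denominators of $\gamma,\delta$. Using $\mathbb{Q}$-linear independence of $c, d$, the condition $\varphi(g)-aDn\in b\mathbb{Z}$ rewrites as
\[
\psi_1(g)=\alpha Dn+k\gamma,\quad \psi_2(g)=\beta Dn+k\delta,\qquad\text{for some }k\in\mathbb{Z}.
\]

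I would then introduce the auxiliary integer-valued homomorphism $\tau:=(K\delta)\psi_1-(K\gamma)\psi_2:G\to\mathbb{Z}$; direct substitution eliminates the $k$-dependence and yields $\tau(g)=KDn\Delta$ for every $g$ in the set. Thus the set is contained in $\{g\in W_{Dn}:\tau(g)/(Dn)=K\Delta\}$. Since $\tau$ is a non-trivial group homomorphism, it satisfies Conditions~(1) and~(2) of the paper, and the symmetry $\tau(g^{-1})=-\tau(g)$ combined with Theorem~\ref{averaging} gives $\Lambda_\tau=0$. Applying the large deviation theorem (Theorem~\ref{ldt}) to $\tau$ with $\epsilon=K|\Delta|/2>0$ produces an exponentially small upper bound $O(e^{-cDn})$ on the density of the set in $W_{Dn}$, which is much stronger than the $o(1)$ required. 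The edge cases $a=0$ (reducing to $\ker\varphi$, handled by Proposition~\ref{gouzthm}) and $b=0$ (applying Theorem~\ref{ldt} directly to $\varphi$ with $\epsilon=|a|/2$) are handled analogously.

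\emph{The main obstacle} is precisely the finite-index case, since Proposition~\ref{gouzthm} is useless for $H_{a,b}$ when $H_{a,b}$ has finite index. The key idea is that although the unshifted condition $\varphi(g)\in a\mathbb{Z}+b\mathbb{Z}$ imposes no density constraint in this regime, the shifted condition $\varphi(g)\in aDn+b\mathbb{Z}$ does: after projecting onto the auxiliary homomorphism $\tau$ transverse to the direction $(\gamma,\delta)$, the linear-in-$n$ shift $aDn$ forces $\tau(g)$ to take the fixed non-zero value $KDn\Delta$, placing all such $g$ deep in the large-deviation regime for $\tau$.
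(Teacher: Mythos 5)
Your proposal is correct and follows the same skeleton as the paper's proof: contain the set in $H_{a,b}\cap W_{Dn}$, dispose of the infinite-index case with Proposition~\ref{gouzthm}, and in the remaining rank-two case reduce to an auxiliary integer-valued homomorphism to which the one-dimensional counting theorems apply. The difference is in how that reduction is done. The paper fixes $D$ with $aD\in\varphi(G)=c\mathbb{Z}\oplus d\mathbb{Z}$, passes to the quotient $K_b=\varphi(G)/(b\mathbb{Z}\cap\varphi(G))$, projects to a $\mathbb{Z}$-factor to get $\varphi'\colon G\to\mathbb{Z}$, and then must split into two sub-cases according to whether the target value $\psi\circ\widetilde\varphi(aD)$ vanishes, invoking Corollary~\ref{homomcoro} in the zero case and Theorem~\ref{ldt} in the non-zero case. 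You instead write $a=\alpha c+\beta d$, $b=\gamma c+\delta d$ and take the explicit functional $\tau=(K\delta)\psi_1-(K\gamma)\psi_2$ transverse to the $b$-direction, which eliminates the $k$-ambiguity and forces $\tau(g)=KDn\Delta$ with $\Delta\neq0$; this makes the drift automatically non-zero, so only Theorem~\ref{ldt} (with $\Lambda_\tau=0$ from the symmetry argument via Theorem~\ref{averaging}) is needed and you get exponential decay uniformly in the finite-index case, avoiding Corollary~\ref{homomcoro} and the zero-drift sub-case altogether. Your framing of the dichotomy as finite versus infinite index of $H_{a,b}$ (rather than the paper's existence of a rationally independent triple of values, Lemma~\ref{triple}) also absorbs the paper's side remark about $a\mathbb{Z}\cap\varphi(G)$ being trivial, and your edge cases $a=0$, $b=0$ in fact already fall into the infinite-index case (note $H_{0,b}=\varphi^{-1}(b\mathbb{Z})$, not $\ker\varphi$, though the conclusion you draw is unaffected). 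Both routes rest on the same external inputs; yours is a slightly cleaner packaging of the second case.
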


\begin{proof}
Notice that
$$\{g \in G: \varphi(g) -a|g| \in b\mathbb{Z}\} \subseteq H_{a,b}.$$
Hence,
$$\#\{g \in W_n: \varphi(g) -an \in b\mathbb{Z}\} \le \#(W_n \cap H_{a,b}).$$
If $\varphi$ satisfies the hypotheses of Lemma \ref{triple}, then we may then apply Proposition \ref{gouzthm} to conclude that
$$\#(W_n \cap H_{a,b}) = o(\#W_n)$$
and the result follows.  \\ \indent
Otherwise, the image of $\varphi$ is $c\mathbb{Z} \oplus d\mathbb{Z}$ for some rationally independent $c,d \in \mathbb{R}$. We can assume that $a\mathbb{Z} \cap (c\mathbb{Z} \oplus d\mathbb{Z})$ is non-empty, since if it is empty, $H_{a,b}$ has infinite index and we can apply the same argument used above. Fix $D \in \mathbb{Z}_{\ge 0}$ such that $aD \in a\mathbb{Z}\cap (c\mathbb{Z} \oplus d\mathbb{Z})$. 
Then note that
$$\{g \in W_{Dn} : \varphi(g) - aDn \in b\mathbb{Z}\} \subset \{g \in W_{Dn}: \widetilde{\varphi}\circ\varphi(g) = \widetilde{\varphi}(aDn)\}$$
where $\widetilde{\varphi} : c\mathbb{Z} \oplus d\mathbb{Z}\to c\mathbb{Z} \oplus d\mathbb{Z}/(b\mathbb{Z}\cap (c\mathbb{Z} \oplus d\mathbb{Z})) = K_b$ is the quotient homomorphism. We have that $K_b$ is necessarily isomorphic to $\mathbb{Z} \oplus \text{Torsion}$ or $\mathbb{Z}^2$ depending on whether $b\mathbb{Z}\cap (c\mathbb{Z} \oplus d\mathbb{Z})$ is trivial. Let $\varphi': G \to \mathbb{Z}$ be the composition  $\psi \circ \widetilde{\varphi}\circ \varphi$ where $\psi: K_b \to \mathbb{Z}$ is a homomorphism that projects $K_b$ to a $\mathbb{Z}$ factor. We then have that
$$\#\{g \in W_{Dn} : \varphi(g) -aDn \in b\mathbb{Z}\} \le \#\{g \in W_{Dn} : \varphi'(g) - n(\psi \circ \widetilde{\varphi}(aD)) =0\}.$$
and so we need to show that
$$\#\{g \in W_{Dn} : \varphi'(g) - n(\psi \circ \widetilde{\varphi}(aD)) =0\} = o(\#W_{Dn}).$$
 This follows from Corollary \ref{homomcoro} if $\psi \circ \widetilde{\varphi}(aD)  = 0$ and Theorem \ref{ldt}  if $ \psi \circ \widetilde{\varphi}(aD) \neq 0$. This concludes the proof.
\end{proof}

We can now deduce the required properties of $f$.

\begin{lemma}
For each maximal component $B_j$ the restriction of $f$ to $\Sigma_{B_j}$, $f_j,$ is non-lattice.
\end{lemma}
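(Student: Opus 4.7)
The plan is to argue by contradiction. Suppose that for some maximal component $B_j$, the restriction $f_j$ is lattice with parameters $(a,b)$, $b > 0$; that is, $f_j^n(x) - an \in b\mathbb{Z}$ for every periodic $x \in \Sigma_{B_j}$ of period $n$. Since any two potentials satisfying Condition $(1)$ for $\varphi$ agree cohomologously when restricted to each $\Sigma_{B_j}$, and being non-lattice is a cohomology invariant, I may take $f$ to be the specific edge-combable potential of Lemma 3.5, namely $f(y) = \varphi(\rho(y_0,y_1))$ when $y_1 \ne 0$ and $f(y)=0$ otherwise. With this choice $\varphi$ distributes exactly over edge-label products, and for any closed loop of length $n$ in $B_j$ with associated periodic point $x_\gamma$ and loop element $\gamma = \rho(e_1)\cdots\rho(e_n)$, one has the exact identity $\varphi(\gamma) = f_j^n(x_\gamma)$; no $O(1)$ error appears. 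The lattice assumption then forces $\varphi(\gamma) - an \in b\mathbb{Z}$.

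I would next inflate this family of loop elements into a positive-density subset of some $W_N$. The number of loops of length $n$ based at a fixed vertex $v_0 \in B_j$ is $\Theta(\lambda^n)$, comparable to $\#W_n$ by Proposition 2.7. To each such loop I attach on the left a shortest $\mathcal{G}$-path $w_1$ from $\ast$ to $v_0$ and on the right a shortest $\mathcal{G}$-path $w_2$ from $v_0$ to $0$, producing a group element $g = w_1 \gamma w_2$ whose canonical path in $\mathcal{G}$ is precisely this concatenation; property $(3)$ of Definition 2.5 guarantees that canonical paths are length-minimising, so $|g| = |w_1| + n + |w_2|$. Since $w_1,w_2$ take only finitely many values (they depend only on $v_0$), a pigeonhole over the shifts $\varphi(w_1)+\varphi(w_2) \pmod b$ yields a subset $T_n \subseteq W_{n+m}$ of size $\Theta(\lambda^n)$, where $m = |w_1|+|w_2|$ is a fixed constant, such that $\varphi(g) - a|g| \in c_0 + b\mathbb{Z}$ on $T_n$ for a single real shift $c_0$.

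Finally, I would invoke a coset-shifted version of Lemma~\ref{cont}: inspecting its proof, which rests on the infinite index of $H_{a,b}$ in $G$ together with Proposition~\ref{gouzthm}, one sees that the same argument applies with $b\mathbb{Z}$ replaced by any translate $c_0 + b\mathbb{Z}$, giving $\#\{g\in W_{Dn'}:\varphi(g) - aDn' - c_0 \in b\mathbb{Z}\} = o(\#W_{Dn'})$. Restricting $n$ to a cofinite arithmetic subsequence so that $n+m = Dn'$ then contradicts the positive lower density of $T_n$ in $W_{n+m}$. The step I expect to require the most care is the inflation procedure, in particular the verification that the attachments of $w_1$ and $w_2$ incur no word-length cancellation and that the resulting $T_n$ genuinely has positive density in $W_{n+m}$, together with the straightforward but technical coset-shifted adaptation of Lemma~\ref{cont}.
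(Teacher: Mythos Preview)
Your proposal is correct and follows essentially the same contradiction strategy as the paper: the lattice assumption on $f_j$ forces a positive-proportion subset of (a subsequence of) the $W_N$ to satisfy $\varphi(g)-a|g|\in c_0+b\mathbb{Z}$, contradicting Lemma~\ref{cont}. The paper's version is terser---it invokes the growth $\#\{x\in\Sigma_{B_j}:\sigma^{np_j}(x)=x\}\sim\lambda^{np_j}$ and asserts directly that the correspondence between $G$ and $\Sigma_A$ gives $\#\{g\in W_{np_j}:\varphi(g)-anp_j\in b\mathbb{Z}\}\ge C\lambda^{np_j}$, leaving implicit the prefix attachment and coset shift that you spell out (note, incidentally, that your loop count $\Theta(\lambda^n)$ at a fixed vertex holds only along $n\equiv 0\pmod{p_j}$, which is why the paper works with $np_j$ throughout).
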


\begin{proof}
Suppose $f_j$ is lattice. We can then find $a,b \in \mathbb{R}$ such that
$$\{f_j^n(x) -na : \sigma^n( x)=x, x \in \Sigma_{B_j}\} \subseteq b \mathbb{Z}.$$
Since $\#\{x \in \Sigma_{B_j}: \sigma^{np_j}(x)=x \}$ grows like $\lambda^{np_j}$, the correspondence between $G$ and $\Sigma_A$ implies that $\#\{g \in W_{np_j} : \varphi(g)-np_ja \in b\mathbb{Z}\} \ge C \lambda^{np_j}$ for some $C>0$. We then have that, for any integer $D$,
$$\limsup_{n\to\infty} \frac{1}{\#W_{Dn}} \#\{g \in W_{Dn} : \varphi(g)-aDn \in b\mathbb{Z}\} >0.$$
This contradicts Lemma $\ref{cont}$ and so the result follows.
\end{proof}

Using this lemma, we deduce the following.
\begin{proposition}\label{latticespec}
Suppose $\varphi:G\to \mathbb{R}$ is a group homomorphism with dense image. Then for all $t \in \mathbb{R}\backslash \{0\}$ and each $j=1,\ldots,m$ , the spectral radius of $L_{C_j,itf}$ is strictly less than $e^h$.
\end{proposition}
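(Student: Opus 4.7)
The plan is to reduce the problem to the classical spectral gap theorem for non-lattice H\"older potentials on mixing subshifts of finite type, making use of the cyclic decomposition of the maximal components and of Lemma $11.4$.

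First I would invoke Lemma $2$ of \cite{oc} (used earlier in the proof of Lemma $4.3$) which says that $L_{C_j,itf}$ and $L_{B_j,itf}$ have the same isolated eigenvalues of modulus greater than $e^{h}-\delta$ for some fixed $\delta>0$. Hence it suffices to show that the spectral radius of $L_{B_j,itf}$, acting on $F_\theta(\Sigma_{B_j})$, is strictly less than $e^h$ whenever $t\in\mathbb{R}\setminus\{0\}$.

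Next I would pass to the $p_j$-th iterate. As in the discussion preceding Proposition $4.6$, the operator $L_{B_j,itf}^{p_j}$ decomposes as the direct sum of the operators $L_{B_j^k,itf^{p_j}}$ acting on the aperiodic subshifts $(\Sigma_{B_j^k},\sigma^{p_j})$ for $k=0,\ldots,p_j-1$. Since the spectral radius of $L_{B_j,itf}$ is the $p_j$-th root of the maximum of the spectral radii of these components, it will be enough to show that for each $k$ the operator $L_{B_j^k,itf^{p_j}}$ has spectral radius strictly less than $e^{p_j h}$. At this stage I would apply the classical Ruelle--Parry--Pollicott theorem (see, e.g., Proposition $4.13$ of \cite{pp}): for a real H\"older potential $g$ on a mixing subshift of finite type, if $g$ is non-lattice then for every $t \in \mathbb{R}\setminus\{0\}$ the operator $L_{itg}$ has spectral radius strictly less than $e^{P(0)}$. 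Applying this with $g=f^{p_j}|_{\Sigma_{B_j^k}}$ (whose pressure equals $p_j h$) gives exactly the required bound.

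The main obstacle is to verify the hypothesis of the theorem just cited, namely that $f^{p_j}|_{\Sigma_{B_j^k}}$ is non-lattice on the mixing shift $(\Sigma_{B_j^k},\sigma^{p_j})$, starting only from the fact provided by Lemma $11.4$ that $f_j := f|_{\Sigma_{B_j}}$ is non-lattice on the irreducible (but merely $p_j$-cyclic) shift $(\Sigma_{B_j},\sigma)$. The bridge is a standard correspondence between periodic orbits: every $\sigma$-periodic orbit in $\Sigma_{B_j}$ has period a multiple of $p_j$ and intersects each $\Sigma_{B_j^k}$ in a single $\sigma^{p_j}$-periodic orbit, with matching Birkhoff sum. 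Therefore, if one had $\{(f^{p_j})^n(x) - na : x \in \Sigma_{B_j^k},\ \sigma^{p_j n}x=x\}\subseteq b\mathbb{Z}$ for some $a,b$, then setting $m = p_j n$ and $a' = a/p_j$ would yield $\{f_j^m(y) - a' m : y \in \Sigma_{B_j},\ \sigma^m y=y\}\subseteq b\mathbb{Z}$, contradicting Lemma $11.4$. Combining this verification with the three reduction steps above would complete the proof.
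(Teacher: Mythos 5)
Your proof is correct and takes essentially the same route as the paper: reduce via Lemma $2$ of \cite{oc} to the maximal component, then apply the classical non-lattice spectral radius theorem from \cite{pp}, with the non-latticeness of $f|_{\Sigma_{B_j}}$ (Lemma $11.5$ in the paper; you cite it as $11.4$) supplying the hypothesis. Your additional step passing through the $p_j$-cyclic decomposition and the periodic-orbit correspondence is a correct verification of a point the paper's proof leaves implicit in its appeal to \cite{pp}.
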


\begin{proof}
When $C_j$ consists of a single connected component, it is well known that the spectral radius of $L_{C_j,itf}$ is less than  or equal to $e^h$ for all $t \in \mathbb{R}$. The non-lattice condition guarantees that for all $t \in \mathbb{R}\backslash \{0\}$ this inequality is strict \cite{pp}. When $C_j$ is not a single component, $C_j$ contains a component with spectral radius $e^h$ and all other components have spectral radius strictly less than $e^h$. We can then, by Lemma $2$ of \cite{oc}, apply the above result component-wise to deduce our result.  
\end{proof}

We are now ready to prove Theorem \ref{ndllt}. Since our method follows that of \cite{RE}, we will sketch the proof and highlight where our work is needed.

\begin{proof}[Proof of Theorem \ref{ndllt}]
We sketch a proof. Recall that $\sigma^2_\varphi >0$. Theorem \ref{ndllt} is concerned with the asymptotics of
$$ \frac{1}{\#W_n} \sum_{g\in W_n} \chi_{[a,b]}(\varphi(g)),$$
where $\chi_{[a,b]}$ is the indicator function on $[a,b]$ for $a,b \in \mathbb{R}$.  We first consider this expression when $\chi_{[a,b]}$ is replaced by an integrable function $\phi_{[a,b]}: \mathbb{R} \to \mathbb{C}$ that has Fourier transform $\widehat{\phi}_{[a,b]}$ that is compactly supported and satisfies $\widehat{\phi}_{[a,b]}(t) = \widehat{\phi}_{[a,b]}(0) + O(|t|)$. Using Fourier inversion we can write
$$ \sum_{g\in W_n} \phi_{[a,b]}(\varphi(g)) = \frac{1}{2\pi } \int_{\mathbb{R}} \sum_{g \in W_n} e^{ it \varphi(g) } \widehat{\phi}_{[a,b]}(t) \ dt.$$
Then, using Lemma \ref{express} and the same over-counting argument used to prove Theorem \ref{CLT} (see Lemma \ref{overcount}), we can assume $$\sum_{g \in W_n} e^{it \varphi(g) }$$ has an exact expression in terms of the transfer operators. We can then write, 
$$\sum_{g\in W_n} \phi_{[a,b]}(\varphi(g)) = \frac{1}{2\pi } \int_{\mathbb{R}} \widehat{\phi}_{[a,b]}(t) \sum_{j=1}^m L_{C_j,itf}^n\chi(\dot{0}) \ dt.$$
Then, using Proposition \ref{latticespec} and Lemma \ref{express}, we show that there exists $\epsilon >0$ such that the lead terms describing the growth of this quantity are
$$\frac{1}{2 \pi} \int_{[-\epsilon,\epsilon]} \widehat{\phi}_{[a,b]}(t) e^{2\pi i kn/p_j} e^{nP_j(itf)}Q_{j,k}(it)\chi(\dot{0}) \ dt$$
for all pairs $j,k$. We can then apply the arguments presented in \cite{RE} to show that, for each $j,k$ this quantity grows asymptotically like
$$ \frac{\int \phi_{[a,b]}(t) \ dt \ \  e^{2 \pi i kn/p_j} Q_{j,k}(0)\chi(\dot{0})}{\sqrt{2\pi}\sigma_\varphi \sqrt{n}} \ e^{nh}$$
where we have used that $\sigma_\varphi$ is independent of the maximal component. We normalise by $\#W_n$ and write $\#W_n$ in terms of transfer operators (see the proof of Theorem \ref{mdlt2}) to see that
$$ \frac{1}{\#W_n} \sum_{g\in W_n} \phi_{[a,b]}(\varphi(g)) \sim \frac{\int \phi_{[a,b]}(t) \ dt}{\sqrt{2\pi}\sigma_\varphi \sqrt{n}}$$
as $n\to\infty$. Using a standard approximation argument we can remove the assumptions on $\phi_{[a,b]}$ and show that the above convergence holds when $\phi_{[a,b]}$ is replaced by any smooth positive function of compact support. Lastly we use one further standard approximation argument to deduce that the same converges holds when we replace $\phi_{[a,b]}$ with $\chi_{[a,b]}$. This concludes the proof.
\end{proof}

As mentioned in the introduction, the hypothesis of Theorem \ref{ndllt} is satisfied, in some sense, by almost every homomorphim $\varphi : G \to \mathbb{R}$. We will now explain what we mean by this. Note that, since every homomorphism $\varphi : G \to \mathbb{R}$ factors through the abelianisation $G/[G,G]$ of $G$, any homomorphism is of the form $g \mapsto \langle \varphi_{ab}(g) , v \rangle$ where $\varphi_{ab} : G \to \mathbb{Z}^k $ is the abelianisation homomorphism post-composed with the projection to the non-torsion factor of $G/[G,G]$, and $v$ is a vector in $\mathbb{R}^k$. We can therefore naturally identify the space of homomorphisms $\text{Hom}(G,\mathbb{R})$ with $\mathbb{R}^k$ where $k \in \mathbb{Z}$ is the rank of the abelianisation  of $G$. As long as $k \ge 2$ then we can find homomorphisms in $\text{Hom}(G,\mathbb{R})$ that satisfy our theorem, as these homomorphisms correspond to vectors $v \in \mathbb{R}^k$ that have two entries that form a rationally independent pair. Furthermore since rationally dependent pairs lie in a countable collection of planes of codimension at least $1$ in $\mathbb{R}^k$ for $k \ge 2$, the set of vectors in $\mathbb{R}^k$ that correspond to homomorphisms that satisfy our theorem have complement in $\mathbb{R}^k$ with Lebesgue measure zero. In this sense almost all homomorphisms satisfy the hypotheses of Theorem \ref{ndllt}.\\ \indent

We now prove a local limit theorem for the displacement function associated to certain `nice' actions. More specifically, for the rest of this section we are interested in actions on pinched Hadamard surfaces. A pinched Hadamard surface is a connected, simply connected, Riemannian surface with all sectional curvatures bounded above by $-1$. A group $G$ that acts on such a surface is said to by fuchsian if it acts properly discontinuously, freely and by isometries. See \cite{db} for precise definitions of these objects and for the others used throughout the rest of this section. Our aim now is to prove the following.
\begin{theorem} \label{thmhad}
Suppose that a fuchsian group $G$ (equipped with a finite symmetric generating set) acts convex cocompactly on a pinched Hadamard surface $X$ with origin $o \in X$. Then there exists $\sigma^2>0$ such that for $a,b \in \mathbb{R}$, $a<b$,
$$\frac{1}{\#W_n} \#\left\{ g \in W_n : d(o,go) - n\Lambda \in [a,b]\right\} \sim \frac{b-a}{\sqrt{2\pi} \sigma \sqrt{n}}$$
as $n \to\infty$.
\end{theorem}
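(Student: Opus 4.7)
The plan is to mirror the proof of Theorem \ref{ndllt}: its Fourier inversion and transfer operator analysis apply essentially verbatim once we establish that the potential $f: \Sigma_A \to \mathbb{R}$ associated to the displacement $\varphi(g) = d(o,go)$ (which satisfies Conditions (1) and (2) as shown in Section 3) is non-lattice on each maximal component $\Sigma_{B_i}$. Given this non-lattice property, Proposition \ref{latticespec} applies unchanged and yields that the spectral radius of $L_{C_j,itf}$ is strictly smaller than $e^h$ for each real $t \neq 0$ and each $j = 1, \ldots, m$. From there, repeating the Fourier inversion scheme of the proof of Theorem \ref{ndllt}---expressing the counting function in terms of transfer operators, localising via the spectral decomposition to a small neighbourhood of $t = 0$, extracting the Gaussian factor from the pressure Taylor expansion of Section 4, and approximating $\chi_{[a,b]}$ by smooth test functions---produces the claimed asymptotic with $\sigma^2 = \sigma_\varphi^2$.

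\textbf{Non-lattice via the length spectrum.} To establish non-lattice, I would exploit the standard correspondence between periodic orbits in $\Sigma_{B_i}$ and hyperbolic conjugacy classes in $G$ arising from closed walks in $B_i$. For a periodic point $x \in \Sigma_{B_i}$ with $\sigma^n x = x$, the orbit sum $f^n(x)$ equals, up to a uniformly bounded error, the translation length $\ell(g_x)$ of an associated hyperbolic isometry $g_x \in G$, which is the length of a closed geodesic in the convex core of the quotient surface $X/G$. Suppose for contradiction that $f|_{\Sigma_{B_i}}$ were lattice with parameters $(\alpha, \beta)$; a Livsic-type cohomology reduction, using the mixing of $\sigma^{p_i}$ on each $\Sigma_{B_i^j}$, replaces $f$ by a cohomologous H\"older potential whose periodic orbit sums match translation lengths exactly, and thus forces the lengths of an infinite family of primitive closed geodesics to lie in the arithmetic set $\alpha \mathbb{Z} + \beta \mathbb{Z}$. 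A theorem of Dal'bo, however, asserts that for convex cocompact fuchsian actions on a pinched Hadamard surface the length spectrum is non-arithmetic---it generates a dense additive subgroup of $\mathbb{R}$---providing the required contradiction.

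\textbf{Main obstacle and conclusion.} The main technical point will be the Livsic reduction absorbing the $O(1)$ discrepancy between the orbit sums $f^n(x)$ and the translation lengths $\ell(g_x)$; a secondary check is to confirm that each maximal component supplies infinitely many distinct primitive closed walks (so that the associated translation lengths do invoke Dal'bo's density theorem), which follows from irreducibility and maximality of $B_i$. Positivity of the variance is handled by the same non-arithmeticity argument: if $d(o, \cdot \, o) - \Lambda |\cdot|$ were bounded on $G$, Lemma \ref{cohomcond} would force the translation length spectrum into $\Lambda \mathbb{Z} + O(1)$, again contradicting Dal'bo's theorem. With non-lattice and $\sigma_\varphi^2 > 0$ established, the argument from the proof of Theorem \ref{ndllt} applies directly to deliver the stated local limit theorem.
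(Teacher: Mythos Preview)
Your overall strategy---reduce to the non-lattice property of the displacement potential on each maximal component and then rerun the transfer-operator analysis of Theorem~\ref{ndllt}---matches the paper's. However, your non-lattice argument has a genuine gap. The lattice hypothesis only constrains the translation lengths $\tau(g)$ arising from closed loops in the fixed maximal component $B_i$ (equivalently, from the loop semi-group $L_v$ at a vertex $v \in B_i$), not the entire length spectrum of $X/G$. Dal'bo's non-arithmeticity theorem says the \emph{full} length spectrum generates a dense subgroup of $\mathbb{R}$; this does not contradict an infinite subfamily---the one coming from $L_v$---lying in $\alpha\mathbb{Z} + \beta\mathbb{Z}$. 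To close the gap you must work inside $L_v$. The paper invokes a result of Gekhtman--Taylor--Tiozzo to produce two independent hyperbolic elements $g,h \in L_v$, notes that every product $gh^n$ remains in $L_v$ with $|gh^n| = |g| + n|h|$, and then applies Dal'bo's more specific estimates (the limit $e^{\tau(gh^n)-\tau(gh^{n-1})} \to e^{\tau(h)}$ together with the strict inequality $\tau(gh^n) < \tau(gh^{n-1}) + \tau(h)$ for arbitrarily large $n$) to derive the contradiction entirely within the loop semi-group.

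A smaller point: your Livsic reduction is misdirected. Cohomologous potentials have identical periodic-orbit sums, so replacing $f$ by a cohomologous function cannot change the values $f^n(x)$ at periodic $x$ and hence cannot absorb any $O(1)$ discrepancy. The paper instead obtains the exact identity $\tau(g) = r^{|g|}(x_g)$ for $g \in L_v$ directly: since $g^n \in L_v$ with $|g^n| = n|g|$ and $x_{g^n} = x_g$, the estimate $d(o,g^n o) = r^{n|g|}(x_g) + O(1) = n\, r^{|g|}(x_g) + O(1)$ yields the equality upon dividing by $n$ and letting $n \to \infty$, with no cohomological manoeuvre needed.
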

Suppose for the rest of this section that $G$ and $X$ are as in the above theorem. It follows from the Svarc-Milnor Lemma that $G$ is hyperbolic and further, from our discussion in Section 2, that the displacement function satisfies Condition (1) and (2). We restrict our study to these actions because we have, in this setting, a good understanding of the length spectrum. Recall that the length spectrum for the action of $G$ on $X$ is the set of possible translation lengths, where, given $g \in G$, the translation length of $g$ is
$$\tau(g) = \lim_{n\to\infty} \frac{d(o,g^no)}{n}.$$
This limit exists by subadditivity.  Let $r: \Sigma_A \to \mathbb{R}$ be the function related to the displacement function via Condition (1). We would like to use arguments involving the length spectrum to deduce non-lattice properties for $r$. The following definition and subsequent lemma allow us to do this. 
\begin{definition}
Let $v$ be a vertex in $\mathcal{G}$. The loop semi-group $L_v$ associated to $v$ is the semi-group consisting of group elements $g \in G$ that correspond (under the labeling $\rho$ from Definition $2.5$) to a loop in $\mathcal{G}$ starting (and also ending) at $v$.
\end{definition}
 This definition is taken from \cite{gtt}. We then have the following.

\begin{lemma}
The restriction $r: \Sigma_{B_j} \to \mathbb{R}$ is lattice if and only if there exists $a,b \in \mathbb{R}$ such that for each vertex $v \in B_j$
$$\{ \tau(g)-a|g| : g \in L_v\} \subseteq b \mathbb{Z}.$$
\end{lemma}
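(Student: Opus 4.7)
The plan is to establish a bijection between periodic points of period $n$ in $\Sigma_{B_j}$ and loops of length $n$ in the subgraph $B_j$, where the latter are precisely the semigroup elements $g \in \bigcup_{v \in B_j} L_v$ with $|g| = n$ (using the convention that $|g|$ denotes the loop length for $g \in L_v$). The key step is to show the identity $r^n(x) = \tau(g)$ whenever $x \in \Sigma_{B_j}$ is the periodic point corresponding to $g \in L_{x_0}$; the equivalence of the two lattice conditions will then be immediate.

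A periodic point $x = (x_0, x_1, \ldots, x_{n-1}, x_0, x_1, \ldots) \in \Sigma_{B_j}$ of period $n$ is uniquely determined by the closed loop $(x_0, \ldots, x_{n-1}, x_0)$ in $B_j$, which under the edge labeling $\rho$ represents some $g \in L_{x_0}$ with loop length $n$. To relate $r^n(x)$ to $\tau(g)$, I would fix any path in $\mathcal{G}$ from $\ast$ to $x_0$ of length $m$, corresponding to a group element $h$, and prepend it to $x$ to obtain $y \in \Sigma_A$ with $y_0 = \ast$ and $\sigma^m y = x$. The length-$(m+kn)$ prefix of $y$ then describes the path $\ast \to x_0$ followed by $k$ traversals of the loop, so it represents the group element $hg^k$. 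Applying Lemma $4.7$ (with the displacement function in place of $\varphi$) to $y$ yields
$$r^{kn}(x) = r^{kn}(\sigma^m y) = d(o, hg^k o) - d(o, h o) + O(1),$$
where the implied constant is independent of $k$. Since $\sigma^n x = x$, we have $r^{kn}(x) = k\, r^n(x)$, and dividing by $k$ and sending $k \to \infty$ gives $r^n(x) = \lim_{k \to \infty} d(o, hg^k o)/k$. The triangle inequality combined with the isometry property of the action yields $|d(o, hg^k o) - d(o, g^k o)| \le 2 d(o, ho)$, so this limit equals $\tau(g)$, and hence $r^n(x) = \tau(g)$ exactly.

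With this identity in hand, the set $\{r^n(x) - an : x \in \Sigma_{B_j},\, \sigma^n x = x,\, n \ge 1\}$ coincides with $\{\tau(g) - a |g| : g \in \bigcup_{v \in B_j} L_v\}$. Containment of the latter in $b\mathbb{Z}$ is equivalent to containment of each $\{\tau(g) - a|g| : g \in L_v\}$ in $b\mathbb{Z}$ as $v$ ranges over the vertices of $B_j$, yielding the claimed equivalence.

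The main obstacle is securing the exactness of the identity $r^n(x) = \tau(g)$: Lemma $4.7$ only provides the relation up to a bounded error, and we need this error to vanish on the periodic orbit. The argument works because the error in Lemma $4.7$ is uniform in the iteration index, while periodicity of $x$ forces $r^{kn}(x)$ to be strictly linear in $k$; these two facts together allow the bounded error to be absorbed upon dividing by $k$ and passing to the limit. The remaining combinatorial correspondence between periodic sequences in $\Sigma_{B_j}$ and labeled loops in $B_j$ is routine.
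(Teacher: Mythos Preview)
Your argument is correct and follows essentially the same approach as the paper: both establish the exact identity $r^{|g|}(x_g) = \tau(g)$ by obtaining an approximate relation with uniformly bounded error, iterating (using that $x_{g^k} = x_g$ so that $r^{k|g|}(x_g) = k\,r^{|g|}(x_g)$), dividing by $k$, and passing to the limit. The paper invokes the H\"older property of $r$ directly to get $d(o,go) = r^{|g|}(x_g) + O(1)$ and then applies this to $g^n$, whereas you route through Lemma~4.7 with an auxiliary prefix $h$ and then strip $h$ off via the triangle inequality; this is a minor cosmetic detour but the substance is identical.
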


\begin{proof}
Take $g \in L_v$. By the H\" older properties of $r$, we have that,
$$d(o,go) = r^{|g|}(x_g) + O(1)$$
where $x_g \in \Sigma_{B_j}$ is the periodic point obtained from repeating the loop corresponding to $g \in L_v$. The implied error is uniform in $g$. Applying this equality to $g^n$, using that $|g^n| = n|g|$ and then dividing by $n$ and letting $n$ tend to infinity shows that $\tau(g) = r^{|g|}(x_g)$. Substituting this expression into the non-lattice condition concludes the proof.
\end{proof}

We note that, if $r: \Sigma_{B_j} \to \mathbb{R}$ is non-lattice for any maximal component $B_j$, then by Lemma \ref{cohomcond} the variance $\sigma^2$ associated to the displacement function is strictly positive. We are now ready to prove our result.

\begin{proof} [proof of Theorem \ref{thmhad}]
We can use the same method used above to prove Theorem \ref{ndllt}. To apply our argument we need to show that the restrictions $r: \Sigma_{B_j} \to \mathbb{R}$ are non-lattice. Once we have shown this, our result follows as before. \\ \indent
To prove this non-lattice condition we use arguments due to Dal'bo that are used in \cite{db} to prove non arithmeticity of the length spectrum. We begin by noting that, by Corollary 6.11 of \cite{gtt}, for a vertex $v$ belonging to a maximal component, there exist independent hyperbolic elements $g,h \in L_v$ (i.e. $g$ and $h$ both have two fixed points in the boundary $\partial X$ and these four fixed points are all distinct). We now consider for $n\in \mathbb{Z}_{\ge 0}$ the elements $gh^n$.
These elements satisfy the following properties,
\begin{enumerate}
\item for all $n$ sufficiently large $gh^n$ is hyperbolic, and;
\item for all $n \in \mathbb{Z}_{\ge 0}$, $|gh^n|= |g| + n|h|$.
\end{enumerate}
The first property is easy to verify and the second follows from the properties of the coding from Definition $2.5$ . This second identity in the above implies that for any $a \in\mathbb{R}$,
\begin{equation} \label{iden}
 e^{\tau(gh^n) - \tau(gh^{n-1})} = e^{(\tau(gh^n)-a|gh^n|) - (\tau(gh^{n-1})- a|gh^{n-1}|) +a}
\end{equation}
for all $n\in\mathbb{Z}_{\ge 0}$. Furthermore, it is known (see \cite{db}) that
\begin{equation} \label{eq}
\lim_{n\to\infty} e^{\tau(gh^n) - \tau(gh^{n-1})} = e^{\tau(h)}.
\end{equation}
\indent We now suppose for contradiction that
$$\{\tau(g)-a|g|: g \in L_v\} \subseteq b\mathbb{Z}$$
for some $a,b \in\mathbb{R}$. By (\ref{iden}) we have that 
$$ e^{\tau(gh^n) - \tau(gh^{n-1})} \in e^{a + b\mathbb{Z}}$$
for all $n\in\mathbb{Z}_{\ge0}$. The convergence in (\ref{eq}) then implies that for all $n$ sufficiently large 
$$\tau(gh^n) = \tau(gh^{n-1}) + \tau(h).$$
However, from the proof of Proposition 2.1 in \cite{db}, we can find arbitrarily large $n$ such that that $\tau(gh^{n}) < \tau(gh^{n-1}) +\tau(h)$. This contradiction shows that the restrictions $r: \Sigma_{B_j} \to \mathbb{R}$ are non-lattice. By our above discussion, this concludes the proof.
\end{proof}

\section{Appendix}

In this section we prove Proposition $8.2$. The main ingredient is the aforementioned `Basic Inequality'.

\begin{proposition} [Basic Inequality \cite{wiley} Lemma $2$, Section $XVI.3$]
Suppose that $F$ is a probability distribution with vanishing expectation and Fourier transform $\widehat{F}$. Suppose that $N$ is the normal distribution with mean $0$, variance $\sigma^2>0$ and derivative $N'$. Suppose further that $F-N$ vanishes at $\pm \infty$. Then,
$$\| F - N\|_\infty \le \frac{1}{\pi} \int_{-T}^T \frac{1}{|t|} \left| \widehat{F}(t) - e^{-\sigma^2t^2/2} \right| dt + \frac{24\|N'\|_\infty}{\pi T},$$
where $T>0$ is arbitrary.
\end{proposition}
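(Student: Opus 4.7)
The plan is to reduce Proposition 8.2 to the classical Basic Inequality (Proposition 12.1) by centering each $H_n$. Specifically, I would introduce the shifted distribution $\tilde{H}_n(x) := H_n(x + E_n)$, which is a probability distribution with zero mean. Its characteristic function is $\widehat{\tilde{H}}_n(t) = e^{-itE_n}\widehat{H}_n(t)$, and $\tilde{H}_n - N$ still vanishes at $\pm\infty$ (since $H_n - N$ does, and $N(x+E_n) - N(x) \to 0$ as $x \to \pm\infty$ by continuity). Hence Proposition 12.1 applies to $\tilde{H}_n$ with $T = T_n$, giving
$$\|\tilde{H}_n - N\|_\infty \le \frac{1}{\pi}\int_{-T_n}^{T_n} \frac{1}{|t|}\bigl|e^{-itE_n}\widehat{H}_n(t) - e^{-\sigma^2t^2/2}\bigr|\, dt + \frac{24\|N'\|_\infty}{\pi T_n}.$$

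Next I would pass from $\tilde{H}_n$ back to $H_n$ via the triangle inequality
$$\|H_n - N\|_\infty \le \|\tilde{H}_n - N\|_\infty + \sup_{x \in \mathbb{R}}|N(x - E_n) - N(x)|,$$
and bound the second term by $|E_n|\,\|N'\|_\infty$ using the mean value theorem; this is harmless because $\|N'\|_\infty = (\sqrt{2\pi}\sigma)^{-1}$ is a fixed constant and $|E_n| \le |E_n|e^{|E_n T_n|}$.

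For the integrand produced by the Basic Inequality, I would split
$$\bigl|e^{-itE_n}\widehat{H}_n(t) - e^{-\sigma^2 t^2/2}\bigr| \le \bigl|\widehat{H}_n(t) - e^{-\sigma^2 t^2/2}\bigr| + |\widehat{H}_n(t)|\cdot|e^{-itE_n} - 1|,$$
and use the elementary inequality $|e^z - 1| \le |z|e^{|z|}$ (valid for complex $z$, here $z = -itE_n$) to obtain $|e^{-itE_n} - 1| \le |tE_n|e^{|tE_n|} \le |tE_n|e^{|E_nT_n|}$ for $|t| \le T_n$. The $1/|t|$ weight then cancels the factor $|t|$, and the hypothesis $\int_{-T_n}^{T_n}|\widehat{H}_n(t)|\,dt \le C$ turns the resulting integral into $C|E_n|e^{|E_nT_n|}$. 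Combining these pieces and absorbing $\|N'\|_\infty$, $C$, and the universal constants $1/\pi$, $24/\pi$ into a single $K$ yields the claimed inequality.

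The whole argument is essentially bookkeeping once the correct centering trick is identified; the main subtlety is choosing $|e^{z}-1| \le |z|e^{|z|}$ rather than the sharper $|z|$ bound, which is what produces the distinctive $|E_n|e^{|E_nT_n|}$ term in the conclusion and is the only form that gracefully tolerates $|E_nT_n|$ not being uniformly small. (The sharper bound would give only $C|E_n|$, which is stronger but would need verification that $|E_n T_n|$ stays bounded — a property not assumed in the hypotheses.) So the real content of the proposition beyond Feller's is the explicit quantification of how the mean drift $E_n$ couples with the cutoff $T_n$.
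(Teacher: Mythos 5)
Your proposal does not actually prove the displayed statement: the statement is Feller's Basic Inequality itself (Proposition $12.1$), which the paper cites from \cite{wiley} without proof, and your argument likewise takes it as a black box. A proof of the Basic Inequality would require Feller's smoothing argument (convolving $F-N$ with a Fej\'er-type kernel whose Fourier transform is supported in $[-T,T]$ and controlling the loss by the modulus of continuity of $N$, which is where the $24\|N'\|_\infty/(\pi T)$ term comes from); none of that appears in your write-up. What you have written is instead a proof of Proposition $8.2$, which is the proof the paper actually supplies immediately after this statement in the appendix, and on that count your argument is correct and follows essentially the same route as the paper: center $H_n$ via $\tilde H_n(x)=H_n(x+E_n)$, apply the Basic Inequality to the zero-mean distribution, split $|e^{-itE_n}\widehat H_n(t)-e^{-\sigma^2t^2/2}|$ by the triangle inequality, use $|e^z-1|\le |z|e^{|z|}$ together with $\int_{-T_n}^{T_n}|\widehat H_n(t)|\,dt\le C$ to produce the $C|E_n|e^{|E_nT_n|}$ term, and undo the centering. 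The one genuine (and favourable) difference is your final step: you bound $\|H_n-N\|_\infty\le\|\tilde H_n-N\|_\infty+\|N'\|_\infty|E_n|$ directly by a change of variables, whereas the paper detours through $\|H_n-F_n\|_\infty\le \|N'\|_\infty|E_n|+2M_n$, which costs an extra (harmless) factor of $2$ on $M_n$; your version is cleaner. If the intended target really is the Basic Inequality, the missing content is the entire smoothing lemma; if the target is Proposition $8.2$, your proof is complete and matches the paper.
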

Take $H_n$, $E_n$, $T_n$, $C$ and $N$ as in the statement of Proposition $8.2$.
\begin{proof} [Proof of Proposition $8.2$]
Consider the distributions $F_n(x) := H_n(x+E_n)$. These have mean zero. Hence, by Proposition $12.1$
$$ \|F_n - N\|_\infty \le \frac{1}{\pi} \int_{-T_n}^{T_n}  \frac{1}{|t|} |e^{-itE_n}\widehat{H}_n(t)-e^{-\sigma^2t^2/2}| \ dt + \frac{24\|N'\|_\infty}{\pi T_n},$$
for all $n \in \mathbb{Z}_{\ge 0}$. We also have
\begin{align*}
\int_{-T_n}^{T_n} \frac{1}{|t|} |e^{-itE_n} \widehat{H}_n(t) - \widehat{H}_n(t)| \ dt &= \int_{-T_n}^{T_n} \frac{1}{|t|}|e^{-itE_n}-1| |\widehat{H}_n(t)| \ dt\\
&\le |E_n| e^{|T_n E_n|} \int_{-T_n}^{T_n} |\widehat{H}_n(t)| \ dt \\
&\le C |E_n| e^{|T_n E_n|},
\end{align*}
for all $n \in \mathbb{Z}_{\ge 0}$.
Now, define
$$M_n := \int_{-T_n}^{T_n} \frac{1}{|t|} |\widehat{H}_n(t)-e^{-\sigma^2t^2/2}| \ dt + |E_n|e^{|T_nE_n|} + \frac{1}{T_n}.$$
From the above,
$$\|F_n - N\|_\infty = O(M_n).$$
We then observe that
$$\|H_n - F_n\|_\infty \le \|N'\|_\infty |E_n| + 2M_n.$$
Lastly,
$$\|H_n - N\|_\infty \le \|F_n - N\|_\infty + \|H_n - F_n\|_\infty = O(M_n + |E_n|),$$
where the implied error term does not depend on $n \in \mathbb{Z}_{\ge 0}$. This is precisely the statement of Proposition $8.2$.
\end{proof}

\end{document}